\documentclass[12pt,reqno]{amsart}

\newcommand\version{July 8, 2026}

\usepackage{amsmath,amsfonts,amsthm,amssymb,amsxtra}
\usepackage{bbm} 
\usepackage{hyperref} 
\usepackage{color}

\newtheorem{theorem}{Theorem}[section]
\newtheorem{proposition}[theorem]{Proposition}
\newtheorem{lemma}[theorem]{Lemma}

\theoremstyle{definition}

\newtheorem{assumption}[theorem]{Assumption}

\theoremstyle{remark}

\newtheorem{remark}[theorem]{Remark}

\newcommand{\1}{\mathbbm{1}}

\renewcommand{\epsilon}{\varepsilon}

\newcommand{\N}{\mathbb{N}}

\renewcommand{\phi}{\varphi}
\newcommand{\R}{\mathbb{R}}

\newcommand{\Sph}{\mathbb{S}}

\DeclareMathOperator{\dist}{dist}

\DeclareMathOperator{\supp}{supp}
\DeclareMathOperator{\sgn}{sgn}
\DeclareMathOperator{\Tr}{Tr}

\begin{document}

\title[Pleijel's theorem for degenerate elliptic operators]{Pleijel's theorem for a class of \\ degenerate elliptic operators}

\author{Rupert L. Frank}
\address[Rupert L. Frank]{Mathe\-matisches Institut, Ludwig-Maximilians Universit\"at M\"unchen, The\-resienstr.~39, 80333 M\"unchen, Germany, and Munich Center for Quantum Science and Technology, Schel\-ling\-str.~4, 80799 M\"unchen, Germany, and Mathematics 253-37, Caltech, Pasa\-de\-na, CA 91125, USA}
\email{r.frank@lmu.de}

\author{Bernard Helffer}
\address[Bernard Helffer]{Laboratoire de Math\'ematiques Jean Leray, CNRS, Nantes Universit\'e, F44000. Nantes, France}
\email{Bernard.Helffer@univ-nantes.fr}

\thanks{
	\textit{Date:} \version \\
	\copyright\, 2026 by the authors. This paper may be reproduced, in its entirety, for non-commercial purposes.\\
	Partial support through US National Science Foundation grant DMS-1954995 (R.L.F.), as well as through the German Research Foundation through EXC-2111-390814868 and TRR 352-Project-ID 470903074 (R.L.F.) is acknowledged.}

\begin{abstract}
	We prove an asymptotic upper bound on the number of nodal domains of eigenfunctions of a class of degenerate elliptic operators. Our proof yields the same constant as in Pleijel's bound for the Dirichlet Laplacian. The operators considered include the Baouendi--Grushin operator and operators with ellipticity degenerating on the boundary.
\end{abstract}

\maketitle

\section{Introduction}

Two fundamental results in spectral geometry are the theorems of Courant and of Pleijel concerning the number of nodal domains of eigenfunctions. While here we are interested in the extension of these theorems to degenerate elliptic operators, it is worthwhile to recall them in the simplest possible setting of the Dirichlet Laplacian $-\Delta_\Omega^{\rm D}$ on an open set $\Omega\subset\R^d$. It is well-known that eigenfunctions $u$ are continuous in $\Omega$ and therefore the nodal domains can be defined as the connected components of the open set $\{ x\in\Omega:\ u(x) \neq 0\}$. Assuming the spectrum of $-\Delta_\Omega^{\rm D}$ to be discrete, we label the eigenvalues in nondecreasing order, counting multiplicities, as $\lambda_1\leq\lambda_2 \leq\ldots$. Moreover, for each $k\geq 1$ we let
$$
\nu_k := \sup_{u \ \text{eigenfunction for}\ \lambda_k} \#\{ \text{nodal domains of}\ u \} \,.
$$
Then Courant's theorem \cite{C} states that, if $\Omega$ is connected, then
\begin{equation}
	\label{eq:courant}
	\frac{\nu_k}{k} \leq 1\,.
\end{equation}
Moreover, explicit computations show that one has equality in dimension $d=1$. However, in dimension $d>1$, Courant's bound can be asymptotically improved. Indeed, for any open set $\Omega\subset\R^d$ of finite measure one has
\begin{equation}
	\label{eq:pleijel}
	\limsup_{k\to\infty} \frac{\nu_k}{k} \leq \gamma(\R^d) \,.
\end{equation}
For $d=2$ and sufficiently regular $\Omega$, this is due to Pleijel \cite{Pl}. His proof generalizes to the case of open sets of finite measure.

The constant in \eqref{eq:pleijel} is
\begin{equation}\label{eq:pl}
	\gamma(\R^d) := \left( C^{\rm FK}(\R^d) \right)^{-d/2} \left( \mathcal W(\R^d) \right)^{-1},
\end{equation}
where $C^{\rm FK}(\R^d)$ is the (Euclidean) Faber--Krahn constant and $\mathcal W(\R^d)$ is the (Euclidean) Weyl constant. More precisely, $C^{\rm FK}(\R^d)$ is the largest constant such that for any open set $\omega\subset\R^d$ of finite measure and any $u\in H^1_0(\omega)$,
\begin{equation}
	\label{eq:fkeucl}
	\int_\omega |\nabla u|^2\,dx \geq C^{\rm FK}(\R^d) \, |\omega|^{-\frac2d} \int_\omega |u|^2\,dx \,,
\end{equation}
and the constant $\mathcal W(\R^d)$ is such that for any open set $\Omega\subset\R^d$ of finite measure the eigenvalue counting function $N(\lambda)$ of the Dirichlet Laplacian in $\Omega$ satisfies
\begin{equation}
	\label{eq:weyleucl}
	\lim_{\lambda\to\infty} \lambda^{-\frac d2} N(\lambda) = \mathcal W(\R^d) \,|\Omega| \,.
\end{equation}
Explicit expressions for $C^{\rm FK}(\R^d)$ and $\mathcal W(\R^d)$ in terms of Gamma functions and zeros of special functions are available. It is known (\cite[Section II.9]{BerMe}; see also \cite{HPS}) that
$$
\gamma(\R^d)<1 
\qquad\text{for all}\ d> 1\,,
$$
so \eqref{eq:pleijel} indeed improves \eqref{eq:courant} asymptotically. We also emphasize that the upper bound $\gamma(\R^d)$ in \eqref{eq:pleijel} only depends on $d$ and not on $\Omega$.

While the constant $\gamma(\R^d)$ appears not only in \eqref{eq:pleijel}, but also in various generalizations, including in the ones in this paper, it should be stressed that, at least when $d=2$, this constant is not the optimal constant in \eqref{eq:pleijel}. Improvements appear in \cite{Bou,Stei}. A candidate for the best constant is suggested in \cite{Po}.

B\'erard and Meyer \cite{BerMe} have shown the analogues of the Courant and Pleijel theorems for the Laplace--Beltrami operator on closed Riemannian manifolds. Interestingly, in their Pleijel-type bound the \emph{same} constant $\gamma(\R^d)$ as in the Euclidean case appears.

Returning to the case of an open subset of $\R^d$, but now with Neumann boundary conditions, already Courant proved the bound \eqref{eq:courant}. However, the analogue of Pleijel's bound \eqref{eq:pleijel} appeared only much later \cite{Po,Le,DPFaVi}. Again, the same constant $\gamma(\R^d)$ appears in these papers.

In passing we mention the recent works \cite{EL,FH}, where the Courant and Pleijel theorems were extended to the setting of sub-Riemannian manifolds. In this case the constant $\gamma(\R^d)$ is replaced by a constant depending on the Carnot groups appearing in the nilpotentization procedure and it is an open problem to verify that the resulting constant is strictly smaller than one for sufficiently large class of Carnot groups; see also \cite{Qiu}.

In the present paper we consider degenerate elliptic second order operators on open subsets of $\R^d$ and we prove the analogues of Courant's and Pleijel's theorems, the latter with the same constant $\gamma(\R^d)$ as in \eqref{eq:pl}. Instead of developing a general theory, we consider two typical model problems. The first model is given by the Baouendi--Grushin operator where a degeneracy occurs on a plane of arbitrary codimension, and the second model involves a degeneracy on the full boundary (so of codimension one). In both cases, we impose Dirichlet boundary conditions. These results appear as Theorems \ref{main} and \ref{maindeg}. We also sketch several possible generalizations of our result.

An important restriction of our result, which we do not know how to remove, concerns the speed of degeneration of ellipticity. We have to assume that the degeneration is not too strong in the sense that a certain power of the distance to the singular set needs to be integrable. This assumption comes through the use of Weyl asymptotics, as we explain momentarily. Whether our results remain valid without this assumption (and with the constant $\gamma(\R^d)$) remains an open problem.

As the definition of the Pleijel constant $\gamma(\R^d)$ in \eqref{eq:pl} indicates, the two ingredients in the proof of \eqref{eq:pleijel} are the Weyl asymptotic formula for large eigenvalues and the Faber--Krahn inequality. These two ingredients need to be refined in the various generalizations mentioned above. 

There is an extensive literature concerning Weyl asymptotics in the degenerate elliptic case and we refer to \cite{SV,BiSo,Lev} and references therein. The regime in which our proof of the Pleijel bound works is what is sometimes called, following Solomeshch \cite{Solo}, the case of `weak degeneracy', which is where the measure appearing in the Weyl formula is absolutely continuous with respect to Lebesgue measure. The Weyl formula is known in the complementary case as well, but then the Weyl measure is concentrated on the singular set where the ellipticity breaks down.

The second ingredient in Pleijel's proof is the Faber--Krahn inequality. A crucial observation of B\'erard and Meyer \cite{BerMe} was that one only needs a Faber--Krahn inequality for sets of small measure and, moreover, that it is acceptable to lose an arbitrarily small $\epsilon>0$ in the constant (with the upper bound on the measure of the set depending on $\epsilon$). A related idea was used by L\'ena in \cite{Le} to discard nodal sets close to the boundary in the Neumann case. In our setting we follow a similar approach, disregarding nodal sets close to the singular set where the ellipticity breaks down. The measure with respect to which we quantify the smallness of nodal sets is the one given by the Weyl asymptotics. As we have already mentioned, this measure is absolutely continuous with respect to Lebesgue measure, but the density is, in general, singular on the singular set. We can deal with this difficulty via weighted Sobolev inequalities with a precise relation between the integrability exponents and the exponents measuring the degeneracy of the weights. These bounds constitute one of the main technical novelties of this paper.

\subsection*{Acknowledgements}
We thank G.~Rozenblum for references concerning the Weyl law for degenerate elliptic operators and N.~Garofalo for discussions concerning unique continuation.


\section{Main result}\label{s1}

In this section we describe our main results in two different models of degenerate elliptic operators. Generalizations are discussed later in Section \ref{sec:final}.

\subsection{The first model: The Baouendi--Grushin operator}\label{s1a}~\\
We fix integers $n,m\geq 1$ and a parameter $\alpha\geq 0$. We write a point in $\R^{n+m}$ as $x=(x_1,x_2)$ with $x_1\in\R^n$ and $x_2\in\R^m$. Also, $\nabla_1$ and $\nabla_2$ denote the gradients with respect to these variables, and similarly $\Delta_1$ and $\Delta_2$ for the Laplacians.

We are interested in the Baouendi--Grushin operator\footnote{We refer to \cite{BGM} for the choice of this name.}
$$
-\Delta_1 - |x_1|^{2\alpha} \Delta_2
$$
or, more precisely, in its Dirichlet realization in $L^2(\Omega)$ for an open set $\Omega\subset\R^{n+m}$. We would like to prove Courant- and Pleijel-type theorems on the number of nodal domains of eigenfunctions.

It is known that eigenfunctions of this operator are continuous; see the discussion in the proof of Theorem \ref{main} below. Consequently, the nodal domains of an eigenfunction $u$ are well-defined as the connected components of the open set $\{x\in\Omega:\ u(x)\neq 0\}$. We let $\nu_k$ denote the supremum of the number of nodal domains over all eigenfunctions corresponding to the $k$-th eigenvalue (counting multiplicities).

\begin{theorem}\label{maincourant}
	Let $n\geq 2$ and let $\Omega\subset\R^{n+m}$ be a connected, open set such that the Dirichlet realization of the operator $-\Delta_1 - |x_1|^{2\alpha} \Delta_2$ in $L^2(\Omega)$ has discrete spectrum. Then
	$$
	\frac{\nu_k}{k} \leq 1
	\qquad\text{for all}\ k \,.
	$$
\end{theorem}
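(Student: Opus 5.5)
We follow Courant's classical argument, reducing the statement to two facts about the Dirichlet realization $L$ of $-\Delta_1-|x_1|^{2\alpha}\Delta_2$ in $L^2(\Omega)$: continuity of eigenfunctions, and a unique continuation property.

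\emph{Setup.} Let $u$ be an eigenfunction for $\lambda_k$ and suppose it has at least $k+1$ nodal domains $\omega_1,\dots,\omega_{k+1}$. Define $u_j:=u\1_{\omega_j}$ for $j=1,\dots,k$. The quadratic form is
$$
q[v]=\int_\Omega \big(|\nabla_1 v|^2+|x_1|^{2\alpha}|\nabla_2 v|^2\big)\,dx ,
$$
with form domain the closure $H$ of $C_c^\infty(\Omega)$ in the corresponding weighted norm.

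\emph{Step 1: $u_j\in H$ and $q[u_j]=\lambda_k\|u_j\|^2$.} Since $u$ is continuous, $\omega_j$ is open and $u=0$ on $\partial\omega_j\cap\Omega$. We approximate $u_j$ by $(u-\epsilon)_+\1_{\omega_j}$ (for $u>0$ on $\omega_j$). Two things must be shown: that this cut-off vanishes near $\partial\omega_j\cap\Omega$, and that near $\partial\Omega$ it inherits the boundary behaviour of $u\in H$ via truncation. The latter follows because $H$ is stable under $v\mapsto(v-\epsilon)_+$; this is a Dirichlet-form property, as the form involves only first derivatives. Testing the eigenvalue equation against $u_j$ then gives $q[u_j]=\lambda_k\|u_j\|^2$ and $q(u_i,u_j)=0$ for $i\ne j$.

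\emph{Step 2: min-max.} Choose a nontrivial combination $v=\sum_{j\le k}c_ju_j$ orthogonal to the first $k-1$ eigenfunctions. Then $q[v]=\lambda_k\|v\|^2$, so by the variational principle $v$ is an eigenfunction for $\lambda_k$. But $v$ vanishes identically on the open set $\omega_{k+1}$.

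\emph{Step 3: contradiction via unique continuation.} This is where connectedness of $\Omega$ and the hypothesis $n\ge2$ enter, and it is the main obstacle. Away from $\Sigma=\{x_1=0\}$ the operator is uniformly elliptic with smooth coefficients, so classical unique continuation propagates vanishing across $\Omega\setminus\Sigma$. Since $\Sigma$ has codimension $n\ge2$ in $\R^{n+m}$, the set $\Omega\setminus\Sigma$ is still connected whenever $\Omega$ is. Hence $v\equiv0$ on $\Omega\setminus\Sigma$, and therefore a.e., which is a contradiction. (For $n=1$ the hyperplane $\Sigma$ may disconnect $\Omega$, and one would instead need unique continuation across $\Sigma$ for Grushin operators, which is delicate; this explains the restriction $n\ge2$.)

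\emph{Continuity.} The continuity of $u$ used in Step 1 I would take from the subelliptic and degenerate Harnack theory mentioned before the statement: the Franchi–Lanconelli results for Grushin-type operators, or De Giorgi–Moser theory adapted to the associated metric. I expect the technically hardest routine part to be the verification in Step 1 that $u_j$ lies in the Dirichlet form domain for general open $\Omega$.
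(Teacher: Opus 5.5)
Your proposal is correct and follows essentially the same route as the paper. Both use Courant's argument: a combination of the restrictions $\1_{\omega_j}\psi$ is a genuine $\lambda_k$-eigenfunction vanishing on an open set, and unique continuation in $\Omega\setminus\Sigma$, which is connected because $n\geq 2$, gives the contradiction. The paper, like you, takes continuity of eigenfunctions from Franchi--Lanconelli and treats form-domain membership of $\1_{\omega_j}\psi$ as routine, deferring it to \cite{FH}. It combines all nodal domains other than one rather than exactly $k$ of them, which makes no difference.
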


We do not know whether the assertion remains true for $n=1$. In this case we show $\nu_k\leq k$ for $k=1,2$ and $\nu_k\leq k+1$ for $k\geq 3$.

Our next theorem is an asymptotic improvement of Theorem \ref{maincourant}. We recall that the constant $\gamma(\R^d)$ is defined in \eqref{eq:pl}.

\begin{theorem}\label{main}
	Let $\Omega\subset\R^{n+m}$ be an open set satisfying
	\begin{equation}\label{hypW}
	\int_\Omega \frac{dx}{|x_1|^{\alpha m}} <\infty \,.
	\end{equation}
	Then
	$$
	\limsup_{k\to\infty} \frac{\nu_k}{k} \leq \gamma(\R^{n+m}) \,.
	$$
\end{theorem}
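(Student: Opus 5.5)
The plan is to follow Pleijel's scheme, in the form refined by B\'erard--Meyer: a Weyl law for the Baouendi--Grushin operator, and a local Faber--Krahn inequality for nodal domains that are small with respect to the Weyl measure. The Weyl measure has density proportional to $|x_1|^{-\alpha m}$.

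\emph{Step 1: Weyl law.} Under \eqref{hypW} we expect the eigenvalue counting function $N(\lambda)$ of the Dirichlet realization to satisfy
$$
\lim_{\lambda\to\infty}\lambda^{-\frac{n+m}{2}}N(\lambda)=\mathcal W(\R^{n+m})\int_\Omega |x_1|^{-\alpha m}\,dx =: \mathcal W(\R^{n+m})\,\mu(\Omega),
$$
where $d\mu=|x_1|^{-\alpha m}dx$. This is the phase-space volume of $\{\xi_1^2+|x_1|^{2\alpha}\xi_2^2<\lambda\}$, since integrating out $\xi$ produces the factor $|x_1|^{-\alpha m}$. This is the `weak degeneracy' regime, and I would cite the literature (e.g.\ \cite{SV,BiSo,Lev}) or prove it by Dirichlet--Neumann bracketing. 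On cubes away from $\{x_1=0\}$ the coefficients are nearly constant. Near $\{x_1=0\}$ the contribution is $o(\lambda^{(n+m)/2})$, because $\mu$ of a small neighborhood is small and a Cwikel--Lieb--Rozenblum type bound controls the number of eigenvalues there by $\lambda^{(n+m)/2}\mu(\cdot)$.

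\emph{Step 2: counting argument.} Let $u$ be an eigenfunction for $\lambda_k$ with nodal domains $D_1,\dots,D_{\nu}$. Each restriction $u|_{D_j}$ lies in the form domain with Dirichlet conditions on $D_j$, and its Rayleigh quotient equals $\lambda_k$. (Continuity of eigenfunctions, via hypoellipticity/Harnack for Grushin operators, is used here.) Fix $\epsilon,\delta>0$ and split the domains into three classes: those with $\mu(D_j)\ge\delta$, those meeting $U_\eta=\{|x_1|<\eta\}$ substantially, and the rest. There are at most $\mu(\Omega)/\delta$ domains with $\mu(D_j)\ge\delta$, and this is $o(k)$. For the small domains lying mostly away from the singular set, the coefficient $|x_1|^{2\alpha}$ is almost constant on a small set. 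After a linear change of the $x_2$ variable, the Euclidean Faber--Krahn inequality \eqref{eq:fkeucl} then gives the local bound
$$
\lambda_k\ge(1-\epsilon)\,C^{\rm FK}(\R^{n+m})\,\mu(D_j)^{-\frac{2}{n+m}}.
$$
Summing over such $D_j$ yields that their number is at most $(1-\epsilon)^{-\frac{n+m}2}C^{\rm FK}(\R^{n+m})^{-\frac{n+m}2}\lambda_k^{\frac{n+m}2}\mu(\Omega)$. Combined with Step 1 and $N(\lambda_k)\ge k$, this gives the constant $\gamma(\R^{n+m})$ after $\epsilon\to0$. Domains that are not contained in a region where the coefficient is almost constant are handled by a partition of unity / IMS localization, which costs only $o(\lambda_k)$.

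\emph{Step 3, the main obstacle: domains near $\{x_1=0\}$.} For these we need a Faber--Krahn-type bound with a small constant loss. Precisely, we want a weighted Sobolev inequality
$$
\int(|\nabla_1 v|^2+|x_1|^{2\alpha}|\nabla_2 v|^2)\,dx\ \ge\ c\,\mu(D)^{-\frac{2}{n+m}}\!\!\int_D|v|^2\,d\mu' ,
$$
where the right side measures mass so that the total contribution of nodal domains in $U_\eta$ is bounded by $C\lambda_k^{(n+m)/2}\mu(U_\eta)$. This is small relative to $k$ as $\eta\to0$, so the constant $c$ need not be sharp. I would first try a Sobolev inequality for the Grushin form into $L^q(d\mu)$ with $q$ chosen so that H\"older's inequality matches the exponent $2/(n+m)$; the precise relation between exponents and weights is exactly the delicate point. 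Then I would localize $u$ with a cutoff $\chi$ on $U_{2\eta}$ and compare $\int|\nabla(\chi u)|^2$ with $\lambda_k\int|\chi u|^2$. Letting $\delta\to0$, then $\eta\to0$, then $\epsilon\to0$ should give the theorem.
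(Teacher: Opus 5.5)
\textbf{Verdict: genuine gap.} Your architecture matches the paper's:
\begin{itemize}
\item a Weyl law with density $|x_1|^{-\alpha m}$;
\item a L\'ena-type splitting of the nodal domains by how much of their $L^2$-mass lies near $\{x_1=0\}$;
\item an almost-Euclidean Faber--Krahn bound away from $\{x_1=0\}$, via IMS localization at a scale $r$ much smaller than the distance to $\{x_1=0\}$, freezing $|x_1|^{2\alpha}$ and rescaling $x_2$;
\item the trivial count $\mu(\Omega)/\delta$ for domains of large $\mu$-measure.
\end{itemize}
Smallness of $\mu(D_j)$ says nothing about the diameter of $D_j$, so it is the IMS step, not smallness, that makes the coefficient almost constant.

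\textbf{The gap is Step~3.} You leave it open, yet it is the only source of a bound on the domains near $\{x_1=0\}$. What is needed is the uniform inequality $\int|\nabla^{(\alpha)}v|^2\,dx\geq \widetilde C\,\mu(\omega)^{-2/(n+m)}\int_\omega|v|^2\,dx$ for every open $\omega$ and every $v$ vanishing outside $\omega$. The right side must use \emph{Lebesgue} measure, because the Rayleigh quotient of $u$ lives in $L^2(\Omega,dx)$; there is no freedom to use another $\mu'$.

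Your proposed route cannot produce this. After a Sobolev inequality into $L^q(d\mu)$, H\"older's inequality gives the factor $\bigl(\int_\omega|x_1|^{2\alpha m/(q-2)}dx\bigr)^{(q-2)/q}$, not $\mu(\omega)^{2/(n+m)}$. To get exactly $\mu(\omega)^{2/(n+m)}$ you are forced to take $q=\frac{2(n+m)}{n+m-2}$ and the weight $|x_1|^{2m\alpha/(n+m-2)}$. This is a \emph{positive} power vanishing on $\{x_1=0\}$, and the only one compatible with the Grushin dilations $(x_1,x_2)\mapsto(sx_1,s^{1+\alpha}x_2)$ for this $q$. For $n=m=1$ there is no critical exponent, and one needs instead $\|\nabla^{(\alpha)}u\|_2\|u\|_2\geq S\bigl(\int|u|^{2(1+m)/m}|x_1|^\alpha dx\bigr)^{m/(1+m)}$. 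Proving these weighted inequalities is the paper's main technical work:
\begin{itemize}
\item for $n+m>2$, Rumin's spectral argument combined with the Robinson--Sikora bound $e^{-t\mathcal L}(x,x)\leq Kt^{-(n+m)/2}|x_1|^{-m\alpha}$;
\item for $n=1$, the substitution $x_1\mapsto(\alpha+1)^{-1}|x_1|^\alpha x_1$, which reduces the $L^1$ version to the Euclidean isoperimetric inequality.
\end{itemize}

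\textbf{How your own Step~1 ingredient closes the gap.} You need the CLR-type bound $N(\Lambda,H_\omega)\leq K\,\mu(\omega)\,\Lambda^{(n+m)/2}$, with $K$ independent of the set, for the Dirichlet realization in an \emph{arbitrary} open $\omega$ with $\mu(\omega)<\infty$. It follows from $\Tr e^{-tH_\omega}\leq(4\pi t)^{-(n+m)/2}\mu(\omega)$ together with $N(\Lambda,H_\omega)\leq e^{t\Lambda}\Tr e^{-tH_\omega}$. The trace bound comes from a Golden--Thompson inequality in $x_1$ and the slice bound $\Tr e^{s\Delta_{\omega(x_1)}}\leq(4\pi s)^{-m/2}|\omega(x_1)|$ with $s=t|x_1|^{2\alpha}$.

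The same bound is what makes your Step~1 work for arbitrary, possibly unbounded and irregular $\Omega$. There you should use IMS localization with Dirichlet problems rather than Neumann bracketing, which needs boundary regularity. The literature you cite does not cover this generality, which is why the paper proves the Weyl law itself.

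Given the bound, let $\chi=1$ on $U_\eta$ with $\supp\chi\subset U_{2\eta}$, and consider the $D_j$ with $\int_{D_j}\chi^2u^2>\epsilon\int_{D_j}u^2$. The functions $\chi u\1_{D_j}$ have disjoint supports in $\Omega\cap U_{2\eta}$ and Rayleigh quotients at most $\Lambda:=2(\lambda_k+\|\nabla\chi\|_\infty^2)/\epsilon$. By min--max their number is at most $K\mu(\Omega\cap U_{2\eta})\Lambda^{(n+m)/2}$. That is exactly the bound you want, with no weighted Sobolev inequality; a single domain already gives Lemma~\ref{fkunif} with $\widetilde C^{\rm FK}=K^{-2/(n+m)}$.

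As written, though, you do not make this connection, the uniform CLR bound is only asserted, and Step~3 remains an open obstacle with a misdirected plan. So the proposal does not yet prove the theorem.

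\textbf{Minor slip.} To bound $\lambda_k^{(n+m)/2}/k$ from above you need $k>N(\lambda_k)$, with $N$ counting eigenvalues strictly below $\lambda_k$, not $N(\lambda_k)\geq k$. By Weyl's law the two are asymptotically equivalent, so this does not affect the conclusion.
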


For $\alpha=0$, this is the standard Pleijel theorem. It is remarkable that the asymptotic upper bound on $\nu_k/k$ is independent of $\alpha$ if \eqref{hypW} holds.

We note that if $\Omega$ is bounded and $\Omega\cap\{x_1=0\}\neq\emptyset$, then \eqref{hypW}  is satisfied if and only if $\alpha < n/m$. We do now know whether a similar theorem holds for $\alpha\geq n/m$.


\subsection{The second model: Degeneration on the boundary}\label{s4}

\begin{assumption}\label{ass}
	Let $\Omega\subset\R^d$, $d\geq 2$, be a bounded open set with $C^1$-boundary. Let $0\leq \alpha<1$ and $\beta>-2$ with $\alpha -\beta<2/d$. Let $a\in C(\overline\Omega,\R^{d\times d})$ be Hermitian-valued and uniformly elliptic and let $b\in C(\overline\Omega)$ be positive.
\end{assumption}

Let
$$
d_\Omega(x):= \dist(x,\Omega^c) \,.
$$
Under the above assumption, we can define the operator
\begin{equation}\label{hypdeg}
	H := - d_\Omega^{-\beta} \,b^{-1} \,\nabla\cdot d_\Omega^\alpha \,a\, \nabla\,,
\end{equation}
with Dirichlet boundary conditions as a nonnegative selfadjoint operator in $L^2(\Omega,b \, d_\Omega^\beta)$. More precisely, we consider the quadratic form
\begin{equation}
	\label{eq:quadform}
u\mapsto 	\int_\Omega d_\Omega^\alpha \, \nabla u\cdot a \nabla u\,dx \,,
\end{equation}
initially defined on $C^1_c(\Omega)$. It is closable in the Hilbert space $L^2(\Omega,d_\Omega^\beta \, b)$ with norm
\begin{equation}
	\label{eq:norm}
u \mapsto \left( \int_\Omega d_\Omega^\beta\, b \,|u|^2\,dx \right)^{1/2},
\end{equation}
and  $H$ is the nonnegative operator corresponding to the closure of this quadratic form.

It is known that under Assumption \ref{ass} the operator $H$ has discrete spectrum; see the references for Lemma \ref{weyldeg} below.

Since the coefficients of the operator $H$ are continuous in $\Omega$, it is well-known that any eigenfunction is continuous in $\Omega$ (in fact, it belongs to $C^{0,\sigma}_{\rm loc}(\Omega)$ for any $\sigma<1$; this can be proved along the lines of \cite[Theorem 5.17]{GiMa}). Consequently, the nodal domains of an eigenfunction $u$ are well-defined as the connected components of the open set $\{x\in\Omega:\ u(x)\neq 0\}$. The quantity $\nu_k$ is defined as before.

The following theorem can be considered as known.

\begin{theorem}
	In addition to Assumption \ref{ass}, let $a$ be locally Lipschitz continuous in $\Omega$. Then
	$$
	\frac{\nu_k}{k} \leq 1 \qquad\text{for all}\ k \,.
	$$ 
\end{theorem}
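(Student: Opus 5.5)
The plan is to run Courant's classical argument: variational characterization of $\lambda_k$, restriction of the eigenfunction to its nodal domains, and a unique continuation principle to exclude equality. The local Lipschitz hypothesis on $a$ is used only in the last step.

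\emph{Step 1 (test space).} Let $u$ be an eigenfunction for $\lambda_k$ and suppose it has at least $k+1$ nodal domains $\omega_1,\dots,\omega_{k+1}$. Set $u_j := u\1_{\omega_j}$. Because $u$ is continuous and vanishes on $\partial\omega_j\cap\Omega$, each $u_j$ lies in the form domain of $H$, i.e.\ the closure of $C^1_c(\Omega)$ in the norm given by \eqref{eq:quadform} plus the square of \eqref{eq:norm}. To verify this, I would approximate $u$ by $(|u|-\epsilon)_+\,\sgn u$ restricted to $\omega_j$ and multiply by cutoffs near $\partial\Omega$. Near $\partial\Omega$ no extra care is needed, since $u$ itself lies in the form domain; one approximates $u$ by $\phi_l\in C^1_c(\Omega)$ and applies the truncation to $\phi_l$, using that truncation is a contraction for the weighted Dirichlet integral. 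Testing the eigenvalue equation against $u_j$, which is allowed because $u_j$ is in the form domain, gives
$$
\int_\Omega d_\Omega^\alpha\,\nabla u_j\cdot a\nabla u_j\,dx=\lambda_k\int_\Omega d_\Omega^\beta\,b\,|u_j|^2\,dx .
$$
Here one uses $\nabla u_j=\1_{\omega_j}\nabla u$ a.e.

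\emph{Step 2 (min-max).} Choose $v=\sum_{j\le k}c_ju_j\neq0$ orthogonal in $L^2(\Omega,d_\Omega^\beta b)$ to the first $k-1$ eigenfunctions. The $u_j$ have disjoint supports, so the Rayleigh quotient of $v$ equals $\lambda_k$. By the min-max principle for the discrete spectrum of $H$, $v$ is therefore an eigenfunction for $\lambda_k$. Thus $v$ is an eigenfunction that vanishes identically on the open set $\omega_{k+1}$.

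\emph{Step 3 (unique continuation, the main obstacle).} In the interior of $\Omega$ the weights $d_\Omega^\alpha$, $d_\Omega^\beta$ are bounded above and below on compact sets. The weight $d_\Omega^\alpha$ is locally Lipschitz, because $d_\Omega$ is $1$-Lipschitz and positive inside. The factor $b$ enters only as a zero-order coefficient; writing $Hv=\lambda_kv$ in divergence form,
$$
-\nabla\cdot(d_\Omega^\alpha a\nabla v)=\lambda_k d_\Omega^\beta b\, v ,
$$
it needs only to be bounded. So $v$ solves locally a uniformly elliptic divergence-form equation with locally Lipschitz principal coefficients and bounded potential. The weak unique continuation theorem of Garofalo--Lin (or Aronszajn--Cordes in this Lipschitz setting) then applies, and Lipschitz continuity of the leading coefficients is exactly what it requires. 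Since $\Omega$ is connected, in the sense implicit in the statement, and $v\equiv0$ on $\omega_{k+1}$, we get $v\equiv0$ in $\Omega$, a contradiction. If $\Omega$ were not connected, the Courant argument would fail in general, so connectedness must be assumed as in Theorem \ref{maincourant}.

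Checking that Step 3's hypotheses hold is the only delicate point. Everything else is the standard Courant argument transferred to the weighted form setting, together with the mild approximation issue in Step 1. Because of the boundary degeneracy, I would handle that approximation through the form closure rather than through any pointwise trace argument.
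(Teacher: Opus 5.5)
Your proposal is correct and is essentially the argument the paper has in mind: Courant's construction from the nodal domains plus min-max, followed by the unique continuation theorem of Aronszajn--Krzywicki--Szarski applied in the interior, where $d_\Omega^\alpha a$ is locally Lipschitz and locally uniformly elliptic; you are also right that connectedness of $\Omega$, left implicit in the statement, is needed. One small repair in Step 1: truncating an abstract approximant $\phi_l$ and then restricting to $\omega_j$ produces jumps across $\partial\omega_j\cap\Omega$, so use instead $\min\{|\phi_l|,(|u|-\epsilon)_+\1_{\omega_j}\}$, which has compact support in $\Omega$, is bounded in the form norm and converges to $(|u|-\epsilon)_+\1_{\omega_j}$ in $L^2(\Omega,d_\Omega^\beta b)$, and pass to the weak limit.
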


Indeed, a closely related result is mentioned in the introduction of \cite{Al} in the uniformly elliptic case. The loss of ellipticity on the boundary, however, does not affect the proof. The Lipschitz continuity is needed for the unique continuation theorem of \cite{AKS}. (Note that if $a$ is Lipschitz, then $d_\Omega^\alpha a$ is Lipschitz as well away from $\partial\Omega = \{d_\Omega = 0\}$.)

Our new result in the present setting is the following Pleijel-type bound.

\begin{theorem}\label{maindeg}
	Under Assumption \eqref{ass}, we have
	$$
	\limsup_{k\to\infty} \frac{\nu_k}{k} \leq \gamma(\R^d) \,.
	$$
\end{theorem}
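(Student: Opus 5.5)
We follow the Pleijel--B\'erard--Meyer strategy, with the Weyl measure as reference measure. The first ingredient is the Weyl law for $H$, which we take from Lemma \ref{weyldeg}. In the weakly degenerate regime $\alpha-\beta<2/d$ it reads
$$
\lim_{\lambda\to\infty}\lambda^{-d/2}N(\lambda)=\mathcal W(\R^d)\int_\Omega \rho(x)\,dx,
\qquad \rho:=\left(\frac{b\,d_\Omega^{\beta}}{d_\Omega^{\alpha}}\right)^{d/2}(\det a)^{-1/2}.
$$
Here $\rho$ is integrable because $(\beta-\alpha)d/2>-1$.

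Now fix an eigenfunction $u$ for $\lambda_k$ and let $\omega_1,\dots,\omega_{\nu}$ be its nodal domains. Each restriction $u|_{\omega_j}$ belongs to the form domain and satisfies
$$
\int_{\omega_j} d_\Omega^\alpha\,\nabla u\cdot a\nabla u\,dx=\lambda_k\int_{\omega_j} d_\Omega^\beta\, b\,|u|^2\,dx .
$$
The goal is a local Faber--Krahn inequality with respect to $\rho$. Given $\epsilon>0$ we want $\delta>0$, and then $\eta>0$, such that every nodal domain $\omega$ satisfies
$$
\lambda_k\geq (1-\epsilon)\,C^{\rm FK}(\R^d)\left(\int_\omega\rho\,dx\right)^{-2/d},
$$
provided $\omega$ is ``interior'' and $\int_\omega\rho\,dx\le\eta$.

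For the interior part we argue as follows. On $\{d_\Omega>\delta\}$ the coefficients $d_\Omega^\alpha a$ and $d_\Omega^\beta b$ are uniformly continuous and nondegenerate. We cover this region by small cubes on which the coefficients are almost constant, apply a linear change of variables there, and use the Euclidean inequality \eqref{eq:fkeucl}. An IMS-type localization, whose error is $O(\lambda_k^{-1/2})$ relative to the main term and hence negligible as $k\to\infty$, yields the inequality above for nodal domains essentially contained in $\{d_\Omega>\delta\}$, up to a factor $1-\epsilon$. This is B\'erard--Meyer's argument with a frozen-coefficient metric.

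The main obstacle is handling the nodal domains that reach the boundary layer $\{d_\Omega<2\delta\}$. There we use a weighted Sobolev (Hardy--Sobolev) inequality of the form
$$
\int_\Omega d_\Omega^\alpha|\nabla v|^2\,dx\ \geq\ c\left(\int_\Omega d_\Omega^{\gamma}|v|^q\,dx\right)^{2/q},
$$
valid for $v$ supported in the layer (using the $C^1$ boundary and $\alpha<1$). The exponents must be chosen so that H\"older's inequality gives
$$
\int_\omega d_\Omega^\beta|v|^2\,dx\le \Big(\int_{\omega}\rho\,dx\Big)^{\theta}\Big(\int d_\Omega^{\gamma}|v|^q\,dx\Big)^{2/q}
$$
with some $\theta>0$. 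Matching exponents forces $q=2d/(d-2)$, adjusted suitably when $d=2$, and the integrability needed in H\"older is exactly $\alpha-\beta<2/d$. Consequently a nodal domain $\omega$ satisfies
$$
\lambda_k\int_{\omega\cap\{d_\Omega<2\delta\}}d_\Omega^\beta b|u|^2\,dx
$$
is controlled by $c^{-1}\big(\int_{\{d_\Omega<2\delta\}}\rho\big)^\theta$ times the energy of $u$ on $\omega$. Since $\int_{\{d_\Omega<2\delta\}}\rho\to0$ as $\delta\to0$, only a small fraction of the $L^2$ mass of such a domain can sit near the boundary, unless the domain has $\lambda_k$ bounded below by a large multiple of $\delta$-dependent quantities. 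Combining this with a cutoff, every nodal domain either has most of its mass in the interior, so the interior estimate applies with an extra factor $1-\epsilon$, or it is counted among domains carrying at least a fixed fraction of mass near the boundary. A counting argument then shows that the latter number is $o(k)$ after letting $\delta\to0$, as in L\'ena's Neumann argument.

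Finally we count. Summing $\int_{\omega_j}\rho\ge\big((1-\epsilon)C^{\rm FK}/\lambda_k\big)^{d/2}$ over the good domains, including those with large $\rho$-measure (of which there are at most $\eta^{-1}\int_\Omega\rho$, i.e.\ $O(1)$ many), gives
$$
\nu_k\le \lambda_k^{d/2}\big((1-\epsilon)C^{\rm FK}\big)^{-d/2}\int_\Omega\rho\,dx+o(k).
$$
Since the Weyl law gives $k\sim\mathcal W(\R^d)\lambda_k^{d/2}\int_\Omega\rho\,dx$, dividing by $k$ and letting $k\to\infty$, then $\delta\to0$ and $\epsilon\to0$, yields the bound $\gamma(\R^d)$.
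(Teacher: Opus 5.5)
Your architecture matches the paper's: the Weyl law of Lemma \ref{weyldeg}, an almost-Euclidean Faber--Krahn inequality away from the boundary, a uniform Faber--Krahn inequality in the boundary layer obtained from a weighted Sobolev inequality plus H\"older, and L\'ena's splitting via $\chi_0^2+\chi_1^2=1$. You absorb the localization error using $\lambda_k\to\infty$ rather than small $\eta$; that is a harmless variant.

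However, the boundary-layer step, which is the only genuinely new difficulty, is not carried out correctly. Your displayed inequality is off by a factor $\lambda_k$, because the energy of $u$ on $\omega$ is already $\lambda_k$ times its mass. It also uses the $\rho$-measure of the whole layer, which throws away what the counting needs. What Lemma \ref{fkunifdeg}, applied to $\chi_1 u$ on $\omega\cap\{d_\Omega<2\delta\}$, actually gives is
\[
\int_\omega d_\Omega^\beta b\,|\chi_1 u|^2\,dx\le C\Big(\int_{\omega\cap\{d_\Omega<2\delta\}}\rho\,dx\Big)^{2/d}(\lambda_k+C_\delta)\int_\omega d_\Omega^\beta b\,|u|^2\,dx .
\]
For large $k$ this does \emph{not} say that only a small fraction of a nodal domain's mass lies near the boundary. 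It says that a domain with $\int_{\omega_j}\chi_1^2 d_\Omega^\beta b|u|^2\geq\epsilon\int_{\omega_j} d_\Omega^\beta b|u|^2$ must satisfy
\[
\int_{\omega_j\cap\{d_\Omega<2\delta\}}\rho\,dx\geq c\,\big(\epsilon/(\lambda_k+C_\delta)\big)^{d/2}.
\]
Summing over the disjoint $\omega_j$ gives $\nu^{(1)}\le C\epsilon^{-d/2}(\lambda_k+C_\delta)^{d/2}\int_{\{d_\Omega<2\delta\}}\rho\,dx$. By the Weyl law this yields $\limsup_k\nu^{(1)}/k\le C'\epsilon^{-d/2}\int_{\{d_\Omega<2\delta\}}\rho\,dx$. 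That bound is not $o(k)$ at fixed $\delta$; it becomes negligible only if $\delta\to0$ is taken before $\epsilon\to0$. This per-domain estimate is the argument missing behind your ``counting argument''.

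Second, the counting only works if the power of the measure is exactly $\theta=2/d$. With $\theta<2/d$ you would get $\nu^{(1)}\lesssim\lambda_k^{1/\theta}$, which is superlinear in $k$. In $d=2$, a weighted Sobolev inequality $\int d_\Omega^\alpha|\nabla v|^2\ge c(\int d_\Omega^\gamma|v|^q)^{2/q}$ with finite $q$, followed by H\"older, only yields $\theta=1-2/q<1$. So ``adjusted suitably'' hides a real step. The paper instead uses the scale-invariant multiplicative inequality of Proposition \ref{sobintdegdom},
\[
\Big(\int_\Omega d_\Omega^\alpha|\nabla u|^2\,dx\Big)^{\frac{q-2}{q}}\Big(\int_\Omega d_\Omega^\beta|u|^2\,dx\Big)^{\frac2q}\ge S\Big(\int_\Omega d_\Omega^{\beta+\alpha(\frac q2-1)}|u|^q\,dx\Big)^{\frac2q},
\]
obtained from the Opic--Kufner inequality with $p=2(q-2)/q$ applied to $|u|^{q/(q-2)}$. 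Combined with H\"older, it gives the Faber--Krahn bound with $\int_\omega d_\Omega^{\beta-\alpha}\,dx$ to the first power, as the counting requires.
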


We recall that $\gamma(\R^d)$ is the constant from Pleijel's proof of the Laplacian. We find it remarkable that the same constant works for a rather large class of operators.


\subsection{Outline of the paper}

The remainder of this paper is structured as follows. In Section \ref{s2} we explain the strategy of the proof of the Pleijel bounds in Theorems \ref{main} and \ref{maindeg} by reducing them to the proof of Weyl asymptotics and certain Faber--Krahn-type inequalities.

Section \ref{sec:weyl} is devoted to the proof of Weyl asymptotics in the Baouendi--Grushin case, while in Sections \ref{s3} and \ref{s5} the Faber--Krahn-type bounds are derived. In Section~\ref{sec:final} we make some additional remarks on possible extensions of our results.

Section \ref{sec:courant} contains the proof of the Courant bound in Theorem \ref{maincourant}. 


\section{Proof strategy of the main results}\label{s2}

In this section we explain the strategy to prove the Pleijel-type bounds in Theorems \ref{main} and \ref{maindeg}. More precisely, we will reduce the proof of these results to the proof of certain Weyl-type asymptotics and Faber--Krahn-type inequalities, which will be proved in the remaining sections.

\subsection{Proof strategy in the Baouendi--Grushin case}\label{sec:strategybg}~\\

Throughout this section $\alpha\geq 0$, integers $n,m\geq 1$ are fixed and not reflected in the notation. It will be convenient to abbreviate
\begin{equation}\label{defmu}
\mu^{\rm BG}(\omega):= \int_\omega \frac{dx}{|x_1|^{\alpha m}} \,. 
\end{equation}

The first ingredient in our proof is a Weyl-type formula. Let $N(\lambda)$ denote the number of eigenvalues $<\lambda$, counting multiplicities, of the Dirichlet realization of $-\Delta_1 - |x_1|^{2\alpha}\Delta_2$ in $L^2(\Omega)$. We recall that the Weyl constant $\mathcal W(\R^d)$ for $d\geq 1$ is defined by~\eqref{eq:weyleucl}.

\begin{lemma}\label{LemmaW}
	Let $\Omega\subset\R^{n+m}$ be an open set with $\mu^{\rm BG}(\Omega)<\infty$. Then
	$$
	\lim_{\lambda\to\infty} \frac{N(\lambda)}{\lambda^{(n+m)/2}} = \mathcal W(\R^{n+m}) \int_\Omega \frac{dx}{|x_1|^{\alpha m}} \,.
	$$
\end{lemma}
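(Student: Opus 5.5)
The plan is to use Dirichlet--Neumann bracketing on a cube decomposition that stays away from the singular set $\{x_1=0\}$, and to control the region near the singular set by a separate uniform bound on its eigenvalue counting function.

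\textbf{Heuristic and localization.} The phase-space volume of $\{(x,\xi):\ |\xi_1|^2+|x_1|^{2\alpha}|\xi_2|^2<\lambda\}$ over $x\in\Omega$ equals $(2\pi)^{-(n+m)}\,\omega_{n+m}\,\lambda^{(n+m)/2}\int_\Omega |x_1|^{-\alpha m}\,dx$, by the change of variables $\xi_2\mapsto |x_1|^{-\alpha}\xi_2$. This is exactly the claimed limit. For fixed $\delta>0$ we split $\Omega$ into
$$
\Omega_\delta=\Omega\cap\{|x_1|>\delta\}\qquad\text{and}\qquad \Omega\cap\{|x_1|<2\delta\},
$$
using an IMS-type partition of unity in $x_1$ only. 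Its localization error is bounded, of size $O(\delta^{-2})$, and is therefore negligible compared with $\lambda$.

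\textbf{Regular part.} On $\{|x_1|>\delta\}$ the operator is uniformly elliptic with smooth coefficients. We tile this region by small cubes $Q$ of side $h$, freeze the coefficient $|x_1|^{2\alpha}$ on each cube (to its sup or inf), and use Dirichlet--Neumann bracketing. On a cube, the frozen operator $-\Delta_1-c\Delta_2$ has explicitly computable counting function, and its Weyl constant is $\mathcal W(\R^{n+m})\,c^{-m/2}|Q|$. Cubes meeting $\partial\Omega$ are handled as in the standard proof for arbitrary open sets of finite measure. For the lower bound we use only Dirichlet cubes inside $\Omega$ together with monotonicity of measure. For the upper bound we compare with the Dirichlet problem on a large set, or we first prove the result for sets that are finite unions of cubes and then approximate, using the near-singular bound below together with $\mu^{\rm BG}(\Omega\setminus K)$ small. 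Letting $h\to0$ gives, for the part of $\Omega$ in $\{|x_1|>\delta\}$, an asymptotic equal to $\mathcal W\int_{\Omega_\delta}|x_1|^{-\alpha m}dx$. Unbounded $\Omega$ is handled by the same finite-measure truncation, again using a uniform bound.

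\textbf{Main obstacle: the near-singular region.} We need a bound, uniform in $\lambda$, of the form
$$
N(\lambda;\,\Omega\cap\{|x_1|<2\delta\})\le C\lambda^{(n+m)/2}\mu^{\rm BG}(\Omega\cap\{|x_1|<2\delta\})+o(\lambda^{(n+m)/2}).
$$
Since $\mu^{\rm BG}(\Omega)<\infty$, the right-hand side is small as $\delta\to0$, and letting $\delta\to0$ then concludes the proof. To prove this bound I would use a Cwikel--Lieb--Rozenblum or Rozenblum-type estimate for operators with a degenerate metric. Concretely, I would decompose dyadically, $\{2^{-j-1}<|x_1|<2^{-j}\}$, and on each shell rescale $x_2$ by $2^{j\alpha}$. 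This turns the operator into one comparable to the Laplacian, for which the Dirichlet counting function on any open set $U$ satisfies the Berezin--Li--Yau bound $N\le C\lambda^{d/2}|U|$. The rescaled Lebesgue measure of each shell piece is comparable to the $\mu^{\rm BG}$-measure of that piece. Summing over shells requires decoupling them, which I would do by Neumann bracketing in $x_1$. Neumann bracketing breaks Berezin--Li--Yau, however, so a cleaner route is preferable: apply the Rozenblum--Solomyak abstract estimate for $-\nabla\cdot A\nabla$ with $A\ge \mathrm{diag}(1,|x_1|^{2\alpha})$, or use a Lieb-type heat-kernel bound $\Tr e^{-tH}\le \int_\Omega p_t(x,x)\,dx$ with the Grushin heat kernel satisfying $p_t(x,x)\le Ct^{-(n+m)/2}|x_1|^{-\alpha m}$. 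This bound holds for $|x_1|\gtrsim\sqrt t$, while the complementary region contributes $t^{-Q/2}$ times a vanishing volume. Obtaining this uniform bound is the step I expect to be hardest.
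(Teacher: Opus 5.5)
Your proposal is correct and is essentially the paper's \emph{alternative} proof of Lemma~\ref{LemmaW}. That proof uses Dirichlet restriction to a region away from $\{x_1=0\}$ for the lower bound and IMS localization for the upper bound. For the residual set it uses the uniform bound $N(\lambda,H_U)\le K\mu^{\rm BG}(U)\lambda^{(n+m)/2}$, which the paper's remark obtains by your heat-kernel route: domination of the Dirichlet heat kernel by the global one, together with $p_t(x,x)\le Kt^{-(n+m)/2}(\sqrt t+|x_1|)^{-\alpha m}$, then $t=1/\lambda$.

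That heat-kernel bound holds for \emph{all} $x$, so you need no separate treatment of $\{|x_1|\lesssim\sqrt t\}$. If you keep one, a merely ``vanishing volume'' is not enough against $t^{-Q/2}$; you need the bound $|U\cap\{|x_1|<\sqrt t\}|\le t^{\alpha m/2}\mu^{\rm BG}(U)$, which does hold.

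For the regular part, use your ``finite union of cubes plus approximation'' option rather than the outer Dirichlet comparison, which would give $\mu^{\rm BG}(\overline\Omega)$ in place of $\mu^{\rm BG}(\Omega)$ for rough boundaries.

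The paper's main proof avoids bracketing altogether. It gets the non-asymptotic bound $\Tr e^{-tH_\Omega}\le(4\pi t)^{-(n+m)/2}\mu^{\rm BG}(\Omega)$ from the Golden--Thompson inequality, and the lower bound from coherent states in $x_1$ plus a Jensen inequality for partial traces.
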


There is a large literature on spectral asymptotics for degenerate elliptic operators, including \cite{Solo,BaGo,Met2,BCP,No,BiSo0,SV,SV0,Lev}. Since we have not been able to find the statement under our minimal assumptions, we provide a proof in Section \ref{sec:weyl} below. 

\medskip

To simplify the notation, we use the Grushin gradient
$$
\nabla^{(\alpha)} u = \begin{pmatrix}
	\nabla_1 u \\ |x_1|^\alpha \nabla_2 u
\end{pmatrix},
$$
so, in particular, $|\nabla^{(\alpha)}u|^2 = |\nabla_1 u|^2 + |x_1|^{2\alpha} |\nabla_2 u|^2$. For an open set $\Omega\subset\R^{n+m}$ we consider the space 
$$
S^{1,(\alpha)}(\Omega) := \{ u\in L^2(\Omega) :\ \nabla^{(\alpha)} u \in L^2(\Omega) \}
$$
(where $\nabla^{(\alpha)} u$ is understood in the sense of distributions) and we let $S^{1,(\alpha)}_0(\Omega)$ denote the closure of $C^1_c(\Omega)$ in $S^{1,(\alpha)}(\Omega)$.

The next ingredient in our proof is a Faber--Krahn-type inequality for open sets $\omega\subset\R^{n+m}$, involving their weighted measure $\mu^{\rm BG}(\omega)$ rather than their Euclidean measure $|\omega|$. We show that, if the former measure is small and the set is away from the critical set $\{x_1=0\}$, then the corresponding constant can be chosen arbitrarily close to the \emph{Euclidean Faber--Krahn constant} $C^{\rm FK}(\R^{n+m})$, defined by \eqref{eq:fkeucl}.

\begin{lemma}\label{almostfk}
	For every $\theta>0$ and $\delta>0$ there is an $\eta>0$ such that for every open set $\omega\subset\R^{n+m}$ with $\mu^{\rm BG}(\omega)\leq\eta$ and $\inf_{x\in\omega} |x_1|\geq \delta$ and every $u\in S^{1,(\alpha)}_0(\omega)$ one has
	$$
	\int_\omega |\nabla^{(\alpha)} u|^2\,dx \geq (1-\theta) \, C^{\rm FK}(\R^{n+m}) \, \mu^{\rm BG}(\omega)^{-\frac2{n+m}} \int_\omega |u|^2\,dx \,.
	$$
\end{lemma}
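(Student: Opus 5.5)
The plan is to reduce to the Euclidean Faber--Krahn inequality \eqref{eq:fkeucl} by localizing in the $x_1$-variable to small cells, on each of which $|x_1|^\alpha$ is nearly constant, and then rescaling $x_2$ so that the Grushin gradient becomes an ordinary gradient. The hypothesis $\inf_\omega|x_1|\geq\delta$ is what makes this possible: on a cell of diameter $h\leq\delta/2$ meeting $\omega$, every point of $\omega$ in the cell has $|x_1|\geq\delta$. Hence the ratio $\sup/\inf$ of $|x_1|^\alpha$ over $\omega\cap$cell is at most $(1+h/\delta)^{\alpha}$, which is $\leq 1+\epsilon$ once $h$ is small depending on $\delta,\epsilon$. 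The localization error is paid for with the smallness of $\mu^{\rm BG}(\omega)$, i.e.\ with $\eta$.

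\emph{Step 1 (IMS localization).} Fix $h>0$ and a smooth partition $\sum_j\chi_j(x_1)^2=1$ on $\R^n$. Each $\chi_j$ is supported in a cube $Q_j$ of side $h$, and $\sum_j|\nabla_1\chi_j|^2\leq C_n h^{-2}$. Since the $\chi_j$ depend only on $x_1$, the Grushin gradient sees only $\nabla_1\chi_j$. For $u\in S^{1,(\alpha)}_0(\omega)$ (first for $u\in C^1_c(\omega)$, then by density) the IMS formula gives
\begin{equation*}
\int_\omega|\nabla^{(\alpha)}u|^2dx=\sum_j\int|\nabla^{(\alpha)}(\chi_ju)|^2dx-\int\sum_j|\nabla_1\chi_j|^2|u|^2dx .
\end{equation*}
Here $\chi_ju\in S^{1,(\alpha)}_0(\omega_j)$, where $\omega_j:=\omega\cap(Q_j\times\R^m)$.

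\emph{Step 2 (local rescaling).} Fix $j$ with $\omega_j\neq\emptyset$, pick a point $a_j$ of $\omega_j$, and set $c_j:=|(a_j)_1|^\alpha$. On $\omega_j$ we have $|x_1|^{2\alpha}\geq(1+\epsilon)^{-2}c_j^2$, so
\[
|\nabla^{(\alpha)}v|^2\geq(1+\epsilon)^{-2}\bigl(|\nabla_1v|^2+c_j^2|\nabla_2v|^2\bigr).
\]
Substitute $x_2=c_jy_2$ and set $w(x_1,y_2)=v(x_1,c_jy_2)$. Then
\[
|\nabla_1v|^2+c_j^2|\nabla_2v|^2=|\nabla_yw|^2, \qquad dx=c_j^m\,dy,
\]
and $w\in H^1_0(\tilde\omega_j)$, where $\tilde\omega_j$ is the image of $\omega_j$. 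Applying \eqref{eq:fkeucl} in $\R^{n+m}$ and undoing the substitution (the factors $c_j^m$ cancel in the Rayleigh quotient), we get
\[
\int|\nabla^{(\alpha)}v|^2dx\geq(1+\epsilon)^{-2}C^{\rm FK}(\R^{n+m})\,|\tilde\omega_j|^{-\frac2{n+m}}\int|v|^2dx .
\]
Moreover $|\tilde\omega_j|=c_j^{-m}|\omega_j|\leq(1+\epsilon)^{m}\int_{\omega_j}|x_1|^{-\alpha m}dx$, and this is at most $(1+\epsilon)^m\mu^{\rm BG}(\omega)$. Thus each local piece satisfies the desired inequality with constant $(1+\epsilon)^{-2-2m/(n+m)}C^{\rm FK}(\R^{n+m})\,\mu^{\rm BG}(\omega)^{-2/(n+m)}$.

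\emph{Step 3 (summation and choice of parameters).} Summing over $j$ and using $\sum_j\chi_j^2=1$ gives
\[
\int_\omega|\nabla^{(\alpha)}u|^2\geq\Bigl[(1+\epsilon)^{-2-\frac{2m}{n+m}}C^{\rm FK}\mu^{\rm BG}(\omega)^{-\frac2{n+m}}-C_nh^{-2}\Bigr]\int_\omega|u|^2 .
\]
The parameters are chosen in order:
\begin{itemize}
\item choose $\epsilon$ so that $(1+\epsilon)^{-2-2m/(n+m)}\geq1-\theta/2$;
\item choose $h\leq\delta/2$ so that $(1+h/\delta)^\alpha\leq1+\epsilon$;
\item choose $\eta$ so that $C_nh^{-2}\leq\frac\theta2C^{\rm FK}\eta^{-2/(n+m)}$.
\end{itemize}
Since $\mu^{\rm BG}(\omega)\leq\eta$, the error term is at most $\frac\theta2C^{\rm FK}\mu^{\rm BG}(\omega)^{-2/(n+m)}$, which yields the claimed constant $(1-\theta)C^{\rm FK}(\R^{n+m})$.

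The main point to check carefully is Step 2: we must compare both the gradient weight and the measure $\mu^{\rm BG}$ with the frozen constant $c_j$ using only points of $\omega_j$ rather than of the whole cell, and we must verify that the rescaled function lies in $H^1_0(\tilde\omega_j)$. The latter follows by approximating $\chi_ju$ by $C^1_c(\omega_j)$ functions; this approximation is available because $\chi_j$ is smooth and depends only on $x_1$. The uniformity in $j$, including cells far from $\{x_1=0\}$, holds because the ratio bound $(1+h/\delta)^\alpha$ depends only on $h/\delta$.
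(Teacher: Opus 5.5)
Your proof is correct and is essentially the paper's argument. The paper also localizes only in $x_1$ via an IMS formula, freezes the weight $|x_1|^{\alpha}$ on each piece, and rescales $x_2$ to apply the Euclidean Faber--Krahn inequality; it then compares the rescaled volume with $\mu^{\rm BG}(\omega)$ using $\inf_\omega|x_1|\geq\delta$ and absorbs the $O(r^{-2})$ localization error by taking $\eta$ small. The only difference is that the paper uses a continuous family of translates $\chi_a(x_1)=r^{-n/2}\chi((x_1-a)/r)$, integrated over $a\in\R^n$ and frozen at $(|a|-r)^{\alpha}$, instead of your discrete partition of unity. One harmless slip: a cube of side $h$ has diameter $\sqrt{n}\,h$, so your ratio bound should read $(1+\sqrt{n}\,h/\delta)^{\alpha}$, which only affects the choice of $h$.
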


We will prove this lemma in Section \ref{s3}.

The final ingredient in our proof is a Faber--Krahn-type inequality that is valid for \emph{all} open sets $\omega$, without assuming that their $\mu^{\rm BG}$-measure is small or that they are away from the critical set $\{x_1=0\}$. The prize to be paid is that the constant is (probably) not sharp.

\begin{lemma}\label{fkunif}
	There is a constant $\widetilde C^{\rm FK}>0$ such that for every open set $\omega\subset\R^{n+m}$ with $\mu^{\rm BG}(\omega)<\infty$ and every $u\in S^{1,(\alpha)}_0(\omega)$ one has
	$$
	\int_\omega |\nabla^{(\alpha)} u|^2\,dx \geq \widetilde C^{\rm FK} \mu^{\rm BG}(\omega)^{-\frac2{n+m}} \int_\omega |u|^2\,dx \,.
	$$
\end{lemma}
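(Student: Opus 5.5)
The plan is to derive Lemma~\ref{fkunif} from a single weighted Sobolev inequality adapted to the Grushin geometry, followed by H\"older's inequality against the weight $w(x):=|x_1|^{-\alpha m}$ defining $\mu^{\rm BG}$. Set $d:=n+m$ and assume first $d\geq 3$, with $p:=2d/(d-2)$. For $u\in S^{1,(\alpha)}_0(\omega)$, extended by zero, H\"older with exponents $p/2$ and $d/2$ gives
$$
\int_\omega |u|^2\,dx=\int_\omega \bigl(|u|^2 w^{-2/d}\bigr)\, w^{2/d}\,dx \leq \Bigl(\int |u|^p\, |x_1|^{\frac{2\alpha m}{d-2}}\,dx\Bigr)^{2/p}\,\mu^{\rm BG}(\omega)^{2/d},
$$
since $w^{-p/d}=|x_1|^{2\alpha m/(d-2)}$. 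It therefore suffices to prove the weighted Sobolev inequality
\begin{equation*}
\Bigl(\int_{\R^{n+m}} |u|^p\,|x_1|^{\frac{2\alpha m}{d-2}}\,dx\Bigr)^{2/p}\leq C\int_{\R^{n+m}}|\nabla^{(\alpha)}u|^2\,dx,
\qquad u\in C^1_c(\R^{n+m}).
\end{equation*}
This inequality then extends to $S^{1,(\alpha)}_0(\omega)$ by density, and $\widetilde C^{\rm FK}=C^{-1}$.

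As a consistency check, both sides are invariant under the Grushin dilations $u(x)\mapsto u(\lambda x_1,\lambda^{1+\alpha}x_2)$. With $Q=n+(1+\alpha)m$, the right side scales like $\lambda^{2-Q}$. On the left, the exponent is $-\frac2p\bigl(Q+\tfrac{2\alpha m}{d-2}\bigr)=-(d-2+\alpha m)/1\cdot\frac{1}{1}$, which simplifies to $-(Q-2)$, so the scaling matches. I would prove the inequality by a dyadic decomposition in $|x_1|$. Let $A_j:=\{2^j<|x_1|<2^{j+1}\}\times\R^m$. On $A_0$ the operator is uniformly elliptic, so the Euclidean Sobolev inequality on the product of an annulus with $\R^m$ gives
$$
\|u\|^2_{L^p(A_0)}\leq C\bigl(\|\nabla^{(\alpha)}u\|^2_{L^2(A_0)}+\|u\|^2_{L^2(A_0)}\bigr).
$$
Transporting this to $A_j$ by the Grushin dilation with $\lambda=2^{-j}$, and using the scaling computation above, yields
$$
\Bigl(\int_{A_j}|u|^p|x_1|^{\frac{2\alpha m}{d-2}}\Bigr)^{2/p}\leq C\Bigl(\int_{A_j}|\nabla^{(\alpha)}u|^2+\int_{A_j}\frac{|u|^2}{|x_1|^2}\Bigr).
$$
Summing over $j$ and using $p/2\geq1$, so that $\sum_j a_j^{2/p}\geq(\sum_j a_j)^{2/p}$, gives the claim once the error term is controlled. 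For $n\geq3$ this control comes from the Hardy inequality $\int|x_1|^{-2}|u|^2\leq \frac{4}{(n-2)^2}\int|\nabla_1u|^2$, applied for fixed $x_2$.

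The main obstacle is removing this Hardy term when $n\in\{1,2\}$, and treating the case $d=2$. For $d=2$, where $n=m=1$, I would replace $p$ by any finite $p>2$ and Hölder with exponents $p/2$ and $(p/2)'$. This leads to a weighted Sobolev inequality with an inhomogeneous, scale-invariant weight, which I expect to be handled the same way. For $n\leq2$ I would first try to avoid the $L^2$ term altogether, using a local Sobolev inequality on $A_0$ for $u$ minus its average over suitable cells (a Poincar\'e--Sobolev inequality). I would then control the averages by a chaining argument across dyadic layers, exploiting the fact that in the $x_2$ direction the strips are unbounded and $u$ has compact support. If this proves too delicate, a fallback is to start from the known Grushin Sobolev inequality $\|u\|_{2Q/(Q-2)}\leq C\|\nabla^{(\alpha)}u\|_2$ of Franchi--Lanconelli type. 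I would interpolate it with the local Euclidean estimate, aiming at the same weighted $L^p$ norm. The finiteness assumption $\mu^{\rm BG}(\omega)<\infty$ only enters through the H\"older step.
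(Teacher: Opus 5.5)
Your H\"older reduction is the same as the paper's. For $n\geq 3$ your dyadic argument is correct: local Euclidean Sobolev on $A_0$, Grushin rescaling to $A_j$, subadditivity of $t\mapsto t^{2/p}$, and Hardy in $x_1$. It is a genuinely different and more elementary route than the paper's.

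But the lemma is claimed for all $n,m\geq 1$, and for $n\in\{1,2\}$ there is a genuine gap at exactly the step you flag. After summing over $j$, your right-hand side contains $\int|u|^2|x_1|^{-2}\,dx$. For $n\leq 2$ this is $+\infty$ for every $u$ that does not vanish on $\{x_1=0\}$, because $|x_1|^{-2}$ is not locally integrable in $\R^1$ or $\R^2$. So the inequality you derive is vacuous there.

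Nor can the lower-order term be dropped from the local estimate on $A_0$. Taking $u$ independent of $x_1$ on $A_0$, the homogeneous version would give $\|\psi\|_{L^p(\R^m)}\lesssim\|\nabla\psi\|_{L^2(\R^m)}$ with $p=2(n+m)/(n+m-2)$, which fails by scaling.

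Your suggested repairs are not carried out, and the fallback does not work in its natural form. Bound the error term $2^{-2j}\int_B|u|^2$ on a Grushin box $B$ of size $2^j\times 2^{j(1+\alpha)}$ by H\"older against the unweighted Grushin--Sobolev norm $L^q$, $q=2Q/(Q-2)$. This gives $\lesssim(\int_B|u|^q)^{2/q}$. But then you must control $\sum_B(\int_B|u|^q)^{2/q}$, which is not bounded by $\|u\|_q^2$ since $2/q<1$. The low-frequency part needs a genuinely global argument.

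For $d=2$ there is a further problem. Grushin scaling forces $p=2d/(d-2)$, which is infinite, so no inequality with only $\int|\nabla^{(\alpha)}u|^2$ on the right can be used. To reach the power $\mu^{\rm BG}(\omega)^{-1}$ you need an interpolation inequality with $\|u\|_2$ on the right.

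The paper avoids Hardy entirely, with two tools:
\begin{itemize}
\item For $n+m>2$, which includes $n=1,2$, it uses Rumin's spectral method. It writes $\|\mathcal L^{1/2}u\|^2=\int\int_0^\infty|u^{>E}(x)|^2\,dE\,dx$ and bounds $|u^{\leq E}(x)|^2\leq CE^{(n+m-2)/2}|x_1|^{-m\alpha}\|\mathcal L^{1/2}u\|^2$. This follows from the on-diagonal heat kernel bound $e^{-t\mathcal L}(x,x)\leq Kt^{-(n+m)/2}|x_1|^{-m\alpha}$ of Robinson--Sikora. The weight $|x_1|^{2m\alpha/(n+m-2)}$ comes directly from the $x$-dependence of this bound.
\item For $n=1$, which covers $d=2$, it substitutes $y_1=(\alpha+1)^{-1}|x_1|^\alpha x_1$. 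This turns $\int|\nabla^{(\alpha)}u|\,dx$ and $\int|u|^{(1+m)/m}|x_1|^\alpha\,dx$ into their unweighted Euclidean counterparts for $v$. The isoperimetric inequality applied to $u^2$, plus Cauchy--Schwarz, then gives $\|\nabla^{(\alpha)}u\|_2\|u\|_2\geq S\bigl(\int|u|^{2(1+m)/m}|x_1|^\alpha\,dx\bigr)^{m/(1+m)}$. This is exactly the interpolation inequality needed before H\"older.
\end{itemize}
As written, your proposal proves the lemma only for $n\geq 3$.
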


\begin{remark}
	The constant $\widetilde C^{\rm FK}$ depends on $\alpha$, $n$ and $m$. For the best (that is, largest possible) constant, one can show that
	$$
	\widetilde C^{\rm FK} \leq C^{\rm FK}(\R^{n+m}) \,.
	$$
	Indeed, it suffices to take $\omega=B_r(a)$ where $|a_1|=1$, to let $r\to 0$ and to apply the standard Faber-Krahn estimate.
\end{remark}

With these ingredients at hand, we can now give the proof of our first main result. The proof combines ideas of \cite{BerMe} and \cite{Le}.

\begin{proof}[Proof of Theorem \ref{main}]
	Our proof will depend on several parameters that will all be chosen small at the end. We fix two parameters $\theta>0$ and $\delta>0$ and choose $\eta>0$ as in Lemma \ref{almostfk}. 
	Also, we pick two real-valued $C^1$ functions $\chi_0$ and $\chi_1$ on $\R^{n+m}$ with
	\begin{align*}
		& \chi_0^2 + \chi_1^2 =1 \,,\\
		& \chi_0 =1 \quad \text{in}\ \{ |x_1|\geq 2\delta \} \,,\\
		& \chi_1 = 1 \quad \text{in}\ \{|x_1|\leq \delta \} \,.
	\end{align*}
	
	Let $\lambda_k$ be an eigenvalue of the Dirichlet realization of $-\Delta_1 - |x_1|^{2\alpha} \Delta_2$ in $L^2(\Omega)$ and let $\psi$ be a corresponding normalized real eigenfunction. We know that $u$ is continuous in $\Omega$. Indeed, the function $\exp (\sqrt{\lambda_k}\, t) \psi$ satisfies $(-\Delta_1 - |x_1|^\alpha \Delta_2- \partial_t^2) \exp (\sqrt{\lambda_k}\, t) \psi= 0$ in $\Omega\times\R$ and is therefore locally H\"older continuous by \cite{FrLa}. (Concerning the assumption of $\lambda$-connectedness introduced there, we refer to the paragraph before \cite[Definition 2.4]{FrLa2}.) Therefore its nodal domains $\Omega_j$ are well defined as the connected components of $\{\psi\neq 0\}$. Let $\nu(\psi)$ denote the number of nodal domains. We will see soon that this number is finite.
	
	Let $\epsilon>0$ be another parameter. Following \cite{Le} we split
	$$
	\nu(\psi) = \nu^{(0)}_\leq(\psi) + \nu^{(0)}_>(\psi) + \nu^{(1)}(\psi) \,,
	$$
	where $\nu^{(0)}_\leq(\psi)$ denotes the number of nodal domains $\Omega_j$ with $\mu^{\rm BG}(\Omega_j)\leq\eta$ and
	$$
	\int_{\Omega_j} \chi_0^2 \, |\psi|^2\,dx \geq (1-\epsilon) \int_{\Omega_j} |\psi|^2\,dx \,,
	$$
	where $\nu^{(0)}_>(\psi)$ denotes the number of nodal domains $\Omega_j$ with $\mu^{\rm BG}(\Omega_j)>\eta$ and
	$$
	\int_{\Omega_j} \chi_0^2 \, |\psi|^2\,dx \geq (1-\epsilon) \int_{\Omega_j} |\psi|^2\,dx \,,
	$$
	and where $\nu^{(1)}(\psi)$ denotes the number of nodal domains $\Omega_j$ with
	$$
	\int_{\Omega_j} \chi_0^2 \, |\psi|^2\,dx  < (1-\epsilon) \int_{\Omega_j} |\psi|^2\,dx \,.
	$$
	
	For a nodal domain $\Omega_j$ of the type counted in $\nu^{(0)}_\leq$, we apply Lemma \ref{almostfk} to the function $u=\chi_0 \psi$ and the set $\omega = \Omega_j\cap\{|x_1|>\delta\}$. By the same argument as in \cite{FH} we have $u\in S^{1,(\alpha)}_0(\omega)$ and we obtain
	\begin{align*}
		& (1-\epsilon)(1-\theta) C^{\rm FK} \mu^{\rm BG}(\Omega_j)^{-\frac2{1+m}} \int_{\Omega_j} |\psi|^2\,dx \\
		& \quad \leq (1-\theta) C^{\rm FK} \mu^{\rm BG}(\Omega_j)^{-\frac2{1+m}}\int_{\Omega_j} \chi_0^2\, |\psi|^2\,dx \\
		& \quad \leq \int_{\Omega_j} |\nabla^{(\alpha)}(\chi_0\psi)|^2\,dx \\
		& \quad \leq (1+\epsilon') \int_{\Omega_j} \chi_0^2\, |\nabla^{(\alpha)}\psi|^2\,dx + (1+(\epsilon')^{-1}) \int_{\Omega_j} |\nabla\chi_0|^2 \, |\psi|^2\,dx \\
		& \quad \leq \Big( (1+\epsilon')\lambda_k + (1+(\epsilon')^{-1})\|\nabla\chi_0\|_\infty^2 \Big) \int_{\Omega_j} |\psi|^2\,dx \,.
	\end{align*}
	Here $\epsilon'>0$ is an arbitrary parameter and we have abbreviated $C^{\rm FK}=C^{\rm FK}(\R^{n+m})$. Thus,
	$$
	(1-\epsilon)^\frac{n+m}2 (1-\theta)^\frac{n+m}2 \left( C^{\rm FK} \right)^\frac{n+m}{2} \leq \left( (1+\epsilon')\lambda_k + (1+(\epsilon')^{-1})\|\nabla\chi_0\|_\infty^2 \right)^\frac{n+m}2 \mu^{\rm BG}(\Omega_j) \,.
	$$
	Summing with respect to $j$ gives
	\begin{equation}\label{in1}
		\begin{split}
	& (1-\epsilon)^\frac{n+m}2 (1-\theta)^\frac{n+m}2 \left( C^{\rm FK} \right)^\frac{n+m}{2} \nu^{(0)}_\leq(\psi) \\
	& \quad \leq \left( (1+\epsilon')\lambda_k + (1+(\epsilon')^{-1})\|\nabla\chi_0\|_\infty^2 \right)^\frac{n+m}2 \mu^{\rm BG}(\Omega) \,.
	\end{split}
	\end{equation}
	Next, we simply bound
	\begin{equation}\label{in2}
	\nu^{(0)}_>(\psi) \leq \eta^{-1} \mu^{\rm BG}(\Omega) \,.
	\end{equation}
	Finally, consider a nodal domain $\Omega_j$ of the type counted in $\nu^{(1)}(\psi)$ and note that we have
	$$
	\int_{\Omega_j} \chi_1^2\,  |\psi|^2\,dx > \epsilon \int_{\Omega_j} |\psi|^2\,dx \,.
	$$
	We apply Lemma \ref{fkunif} to the function $u=\chi_1 \psi$ and the set $\omega = \Omega_j\cap\{ |x_1|<2\delta\}$ and obtain
	\begin{align*}
		& \epsilon \widetilde C^{\rm FK} \mu^{\rm BG}(\Omega_j\cap\{|x_1|<2\delta\})^{-\frac2{n+m}} \int_{\Omega_j} |\psi|^2\,dx \\
		& \quad \leq \widetilde C^{\rm FK} \mu^{\rm BG}(\Omega_j\cap\{|x_1|<2\delta\})^{-\frac2{n+m}} \int_{\Omega_j} \chi_1^2\, |\psi|^2\,dx \\
		& \quad \leq \int_{\Omega_j} |\nabla^{(\alpha)}(\chi_1\psi)|^2\,dx \\
		& \quad \leq 2 \int_{\Omega_j} \chi_1^2\, |\nabla^{(\alpha)}\psi|^2\,dx + 2 \int_{\Omega_j} |\nabla\chi_1|^2 \, |u|^2\,dx \\
		& \quad \leq 2 \left( \lambda_k + \|\nabla\chi_1\|_\infty^2 \right) \int_{\Omega_j}\, |\psi|^2\,dx \,.
	\end{align*}
	Thus,
	$$
	\epsilon^\frac{n+m}{2} \left( \widetilde C^{\rm FK} \right)^\frac{n+m}{2} \leq 2^\frac{n+m}2 \left( \lambda_k + \|\nabla\chi_1\|_\infty^2 \right)^\frac{n+m}2 \mu^{\rm BG}(\Omega_j\cap\{|x_1|<2\delta\}) \,.
	$$
	Summing with respect to $j$ gives
	\begin{equation}\label{in3}
	\epsilon^\frac{n+m}{2} \left( \widetilde C^{\rm FK} \right)^\frac{n+m}{2} \nu^{(1)}(\psi) \leq 2^\frac{n+m}2 \left( \lambda_k + \|\nabla\chi_1\|_\infty^2 \right)^\frac{n+m}2 \mu^{\rm BG}(\Omega\cap\{|x_1|<2\delta\}) \,.
	\end{equation}
	
	Combining all these bounds, we see that $\nu(\psi)$ is finite and bounded by
	\begin{align*}
		\nu(\psi) & \leq (1-\epsilon)^{-\frac{n+m}2} (1-\theta)^{-\frac{n+m}2} \left( C^{\rm FK} \right)^{-\frac{n+m}{2}} \\
		& \quad\quad \times \left( (1+\epsilon')\lambda_k + (1+(\epsilon')^{-1})\|\nabla\chi_0\|_\infty^2 \right)^\frac{n+m}2 \mu^{\rm BG}(\Omega) \\
		& \quad + \eta^{-1} \mu^{\rm BG}(\Omega) \\
		& \quad + \epsilon^{-\frac{n+m}{2}} \left( \widetilde C^{\rm FK} \right)^{-\frac{n+m}{2}} 2^\frac{n+m}2 \left( \lambda_k + \|\nabla\chi_1\|_\infty^2 \right)^\frac{n+m}2 \mu^{\rm BG}(\Omega\cap\{|x_1|<2\delta\}) \,.
	\end{align*}
	We can take the supremum over all eigenfunctions $\psi$ corresponding to $\lambda_k$, divide by $k$ and take the limit $k\to\infty$ to obtain, in view of Weyl asymptotics (see Lemma \ref{LemmaW}),
	\begin{align*}
		\limsup_{k\to\infty} \frac{\nu_k}{k} & \leq \left( \mathcal W(\R^{n+m}) \right)^{-1} (1-\epsilon)^{-\frac{n+m}2} (1-\theta)^{-\frac{n+m}2} \left( C^{\rm FK} \right)^{-\frac{n+m}{2}} (1+\epsilon')^\frac{n+m}2 \mu^{\rm BG}(\Omega) \\
		& \quad + \left( \mathcal W(\R^{n+m}) \right)^{-1} \epsilon^{-\frac{n+m}{2}} \left( \widetilde C^{\rm FK} \right)^{-\frac{n+m}{2}} 2^\frac{n+m}2 \frac{\mu^{\rm BG}(\Omega\cap\{|x_1|<2\delta\})}{\mu^{\rm BG}(\Omega)} \,.
	\end{align*}
	Letting first $\delta\to 0$, $\theta\to 0$ and $\epsilon'\to 0$ and then $\epsilon\to 0$, we obtain, using \eqref{eq:pl}, the bound claimed in the theorem.
\end{proof}


\subsection{Proof strategy in the case of boundary degeneration}\label{sec:strategydeg}~\\

We explain the proof of Theorem \ref{maindeg}, focusing, in particular, on the differences from that of Theorem \ref{main}. The first ingredient is, as usual, a Weyl formula. Let $N(\lambda)$ denote the number of eigenvalues $<\lambda$, counting multiplicities, of the operator $H$ introduced in Subsection~\ref{s4}.

\begin{lemma}\label{weyldeg}
	Under Assumption \eqref{ass}, we have
	\begin{equation}
		\lim_{\lambda\to\infty} \frac{N(\lambda)}{\lambda^{d/2}} = \mathcal W(\R^d) \int_\Omega d_\Omega^{-(\alpha-\beta)d/2} \frac{b^{d/2}}{\sqrt{\det a}}\,dx \,.
	\end{equation}
\end{lemma}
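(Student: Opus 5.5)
We will prove Lemma~\ref{weyldeg} by Dirichlet--Neumann bracketing combined with a Hardy-type estimate that controls a thin layer near $\partial\Omega$. The plan has three steps: freeze coefficients in the interior, estimate the boundary layer, and let the layer width tend to zero.

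\emph{Interior bracketing.} Fix $\rho>0$ and set $\Omega_\rho:=\{x\in\Omega:\ d_\Omega(x)>\rho\}$. Cover $\Omega_\rho$, up to a null set, by finitely many small cubes $Q_j$ of side $h$. On each cube the weights $d_\Omega^\alpha a$ and $d_\Omega^\beta b$ are continuous and bounded above and below, so they can be frozen up to a factor $1\pm o_h(1)$.

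For the frozen constant-coefficient problem $-b_j^{-1} d_j^{-\beta} d_j^\alpha \nabla\cdot a_j\nabla$ on $Q_j$, with Dirichlet or Neumann conditions, the classical Weyl law gives
$$
N_j(\lambda)\sim \mathcal W(\R^d)\,\lambda^{d/2}\,|Q_j|\, d_j^{-(\alpha-\beta)d/2}\, b_j^{d/2}(\det a_j)^{-1/2}.
$$
This follows after the linear change of variables $y=a_j^{-1/2}x$ and the scaling by the constant factor $d_j^{\alpha-\beta}/b_j$. Summing over $j$ and letting $h\to0$ yields the Riemann sum of the integral over $\Omega_\rho$. This gives the lower bound, restricting to $\bigoplus_j$ Dirichlet problems on the $Q_j$.

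\emph{Boundary layer for the upper bound.} Split $\Omega$ into the cubes above and the layer $L_\rho:=\Omega\setminus\overline{\Omega_\rho}$, and impose Neumann-type decoupling: the form domain only grows, so $N(\lambda)\le\sum_j N_j^{\rm N}(\lambda)+N_{L_\rho}(\lambda)$. Here $N_{L_\rho}$ refers to the form $\int_{L_\rho} d_\Omega^\alpha|\nabla u|^2$ in $L^2(L_\rho,d_\Omega^\beta)$, with Dirichlet condition on $\partial\Omega$ and free condition on the inner boundary; $a$ and $b$ are bounded by constants there.

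We then need
$$
\limsup_{\lambda\to\infty}\lambda^{-d/2}N_{L_\rho}(\lambda)\le C\int_{L_\rho} d_\Omega^{-(\alpha-\beta)d/2}dx \;\longrightarrow\; 0 \quad\text{as }\rho\to0,
$$
which uses the integrability condition $(\alpha-\beta)d/2<1$. To obtain this, we flatten the $C^1$ boundary locally, which is bi-Lipschitz, so the weights are comparable to powers of $x_d$. We then decompose the layer dyadically into the sets $\{2^{-k-1}\rho<d_\Omega<2^{-k}\rho\}$ and further into cubes of side comparable to $2^{-k}\rho$. On each such cube the weights are comparable to constants $(2^{-k}\rho)^\alpha$ and $(2^{-k}\rho)^\beta$, so a uniform Neumann eigenvalue count gives
$$
N_{\rm cube}(\lambda)\le 1+C\lambda^{d/2}(2^{-k}\rho)^{-(\alpha-\beta)d/2}|{\rm cube}|.
$$

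\emph{Main obstacle.} Summing these counts, the "$+1$" terms diverge because there are infinitely many cubes as $k\to\infty$. This step is the crux. It has to be handled by a Hardy inequality: using $\alpha<1$, the weighted Hardy inequality
$$
\int d_\Omega^\alpha|\nabla u|^2\ge c\int d_\Omega^{\alpha-2}|u|^2
$$
holds for $u$ vanishing on $\partial\Omega$. Since $d_\Omega^{\alpha-2}\ge \lambda\, d_\Omega^\beta$ wherever $d_\Omega^{\alpha-2-\beta}\ge\lambda/c$, that is, wherever $d_\Omega\le (c/\lambda)^{1/(2-\alpha+\beta)}$ (using $2-\alpha+\beta>0$), this very thin region contributes no eigenvalues below $\lambda$. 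Concretely, we apply this through an IMS localization: cut off at the scale $t_\lambda:=(c'/\lambda)^{1/(2-\alpha+\beta)}$, with localization error absorbed by Hardy.

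The dyadic cubes with $2^{-k}\rho\gtrsim t_\lambda$ then number about $\int_{d_\Omega>t_\lambda} d_\Omega^{-d}\,dx\lesssim t_\lambda^{1-d}$. This is $o(\lambda^{d/2})$ provided $(d-1)/(2-\alpha+\beta)<d/2$, i.e.\ $\alpha-\beta<2/d$, which is exactly the hypothesis. Combining the interior asymptotics, the layer estimate and $\rho\to0$ gives the claim, with discreteness of the spectrum following from the same bounds.
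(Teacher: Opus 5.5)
The paper does not prove Lemma~\ref{weyldeg}. It cites Birman--Solomyak \cite{BiSo}, which only covers the smaller range $\alpha+\max\{-\beta,0\}<2/d$, and Tashchiyan \cite{Ta} for the full range $\alpha-\beta<2/d$. Your self-contained argument is therefore a genuinely different route, and its architecture is sound. It has three parts: frozen-coefficient Dirichlet--Neumann bracketing on $\{d_\Omega>\rho\}$; Neumann counts $1+C\lambda^{d/2}\int_Q d_\Omega^{-(\alpha-\beta)d/2}\,dx$ on Whitney-type cubes $Q$ in the collar; and a weighted Hardy inequality, which is where $\alpha<1$ enters. The Hardy inequality shows that a collar of width $t_\lambda\sim\lambda^{-1/(2-\alpha+\beta)}$ carries no eigenvalues below $\lambda$, so only $O(t_\lambda^{1-d})=o(\lambda^{d/2})$ constant modes survive.

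This makes transparent that $\alpha-\beta<2/d$ is used exactly twice: for integrability of the Weyl density and for the count of constant modes. Your argument also does not appear to use $\alpha\geq 0$ anywhere, which is relevant to the remark in Section~\ref{sec:final} that this assumption comes from the Weyl law. What the citations buy is brevity and scope. One example is natural boundary conditions as in Extension~1: there constants lie in the form domain, so your Hardy device for suppressing the $+1$ terms is unavailable.

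A simplification: use one Whitney decomposition of $\Omega$ throughout, subdividing the Whitney cubes of side $\gtrsim\rho$ into cubes of side $h$ for the interior part. Then the interior and collar pieces partition $\Omega$ exactly, and no boundary flattening is needed, only $|\{d_\Omega<s\}|\lesssim s$.

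The step you call the crux needs a repair. Suppose the cutoff passes from $1$ to $0$ across a single scale, say $t_\lambda<d_\Omega<2t_\lambda$. Then somewhere in that shell the IMS error $d_\Omega^\alpha\sum_i\nabla\chi_i\cdot a\nabla\chi_i$ is at least a fixed multiple of $d_\Omega^{\alpha-2}$. That is the same order as the Hardy term $c\,d_\Omega^{\alpha-2}$, and making $c'$ smaller does not change the ratio. Since the Hardy constant is fixed and degenerates as $\alpha\to1$, `absorbed by Hardy' is not justified as stated. There are two fixes.
\begin{enumerate}
\item[(i)] Let the cutoff vary logarithmically over $t_\lambda<d_\Omega<Mt_\lambda$. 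This makes the error $O(d_\Omega^{\alpha-2}/\log^2 M)$; then take $M$ large and $c'$ correspondingly small.
\item[(ii)] More simply, skip IMS. On the Dirichlet form domain you have $h[u]\geq(1-\epsilon)h[u]+\epsilon c\int_\Omega d_\Omega^{\alpha-2}|u|^2\,dx$ (Hardy extends to the closure by Fatou). Neumann-decouple this smaller form over the Whitney cubes. A cube $Q$ with $\epsilon c\,\inf_Q d_\Omega^{\alpha-2-\beta}b^{-1}\geq\lambda$ then contributes no eigenvalue below $\lambda$. Every other cube contributes at most its Neumann count at level $\lambda/(1-\epsilon)$. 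Take $\epsilon\to0$ after $\lambda\to\infty$ and $\rho\to0$.
\end{enumerate}
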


Under the additional assumption $\alpha + \max\{-\beta,0\}<2/d$, this proposition is a consequence of the results of Birman and Solomyak \cite{BiSo}. The general case is due to Tashchiyan \cite{Ta}. We also refer to \cite[Appendix 8]{BiSo0} for a review of these results and references to earlier partial results.

The other ingredients of the proof of Theorem \ref{maindeg} are Faber--Krahn-type inequalities. The natural Sobolev space associated to this problem is
$$
H^{1,(\alpha,\beta)}(\Omega):= \{ u\in L^2(\Omega,d_\Omega^\beta\,dx) :\ \nabla u \in L^2(\Omega,d_\Omega^\alpha\,dx) \} \,,
$$
where the gradient is understood in the sense of distributions. Also, for an open set $\omega\subset\Omega$, let $H^{1,(\alpha,\beta)}_0(\omega)$ denote the closure of $C^1_c(\omega)$ in $H^{1,(\alpha,\beta)}(\Omega)$. Note that $H^{1,(\alpha,\beta)}_0(\omega)$ depends not only on $\omega$ but also on $\Omega$ through the choice of the weights.

Let us set, for measurable $\omega\subset\Omega$,
\begin{equation}
	\mu(\omega):= \int_\omega d_\Omega^{-(\alpha-\beta)d/2} \frac{b^{d/2}}{\sqrt{\det a}}\,dx \,.
\end{equation}
The following is an `almost-Euclidean' Faber--Krahn-type inequality in the spirit of B\'erard and Meyer \cite{BerMe}.

\begin{lemma}\label{almostfkdeg}
	Under the above assumptions, for every $\theta>0$ and $\delta>0$ there is an $\eta>0$ such that for every open set $\omega\subset\Omega$ with $\mu(\omega)\leq\eta$ and $\inf_{x\in\omega} d_\Omega(x)\geq \delta$ and every $u\in H^{1,(\alpha,\beta)}_0(\omega)$ one has
	\begin{equation}
		\int_\omega d_\Omega^\alpha\, \nabla u \cdot a \nabla u\,dx \geq (1-\theta) \, C^{\rm FK}(\R^d) \, \mu(\omega)^{-\frac{2}{d}} \int_\omega d_\Omega^\beta\, b\, |u|^2\,dx \,.
	\end{equation}
\end{lemma}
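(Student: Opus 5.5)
The plan is a localization argument in the spirit of Bérard--Meyer: on small sets away from $\partial\Omega$ the coefficients are almost constant, and after a linear change of variables the problem becomes almost the Euclidean one. Fix $\theta>0$ and $\delta>0$ and set $K_\delta:=\{x\in\overline\Omega:\ d_\Omega(x)\geq\delta\}$, a compact set. On $K_\delta$ the functions $d_\Omega^\alpha a$, $d_\Omega^\beta b$ and the density $\rho:=d_\Omega^{-(\alpha-\beta)d/2}b^{d/2}(\det a)^{-1/2}$ are continuous, hence uniformly continuous, and bounded above and below by positive constants depending on $\delta$. Write $A(x):=d_\Omega(x)^\alpha a(x)$ and $B(x):=d_\Omega(x)^\beta b(x)$. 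Then $\rho = B^{d/2}(\det A)^{-1/2}$, because $(\det(d_\Omega^\alpha a))^{-1/2}=d_\Omega^{-\alpha d/2}(\det a)^{-1/2}$. So, for a small open set $\omega$, the claim reads: the first Dirichlet eigenvalue of $-B^{-1}\nabla\cdot A\nabla$ on $\omega$ is at least $(1-\theta)C^{\rm FK}(\R^d)\mu(\omega)^{-2/d}$ with $\mu(\omega)=\int_\omega \rho\,dx$. For constant $A,B$ this is exactly the Euclidean Faber--Krahn inequality after the substitution $x=A^{1/2}y$, with the eigenvalue scaled by $B^{-1}$. Indeed, $\int\nabla u\cdot A\nabla u\,dx = (\det A)^{1/2}\int |\nabla_y v|^2\,dy$, $\int B|u|^2\,dx = B(\det A)^{1/2}\int|v|^2\,dy$ and $|\omega|_y=(\det A)^{-1/2}|\omega|$. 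Combining these gives the bound $B^{-1}C^{\rm FK}|\omega|^{-2/d}(\det A)^{1/d} = C^{\rm FK}\bigl(B^{d/2}(\det A)^{-1/2}|\omega|\bigr)^{-2/d}$, which is the claim with constant coefficients.

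The main obstacle is that $\omega$ need not be geometrically small, only small in $\mu$-measure. It may be spread over all of $K_\delta$, so we cannot freeze the coefficients at one point directly. I would use a partition of unity, as Bérard--Meyer do. Choose $r>0$ so small, by uniform continuity, that on each ball of radius $2r$ meeting $K_\delta$ the matrices $A$ and the values $B$ and $\rho$ vary by a factor at most $1+\theta'$ from their values at the center. Take a finite collection of $C^1$ functions $\phi_i$ with $\sum_i\phi_i^2=1$ on $K_\delta$, each $\phi_i$ supported in a ball $B_{2r}(z_i)$, and with $\sum_i|\nabla\phi_i|^2\leq C_r$. The IMS-type identity gives
$$\int \nabla u\cdot A\nabla u = \sum_i \int \nabla(\phi_i u)\cdot A\nabla(\phi_i u) - \int \Bigl(\sum_i \nabla\phi_i\cdot A\nabla\phi_i\Bigr)|u|^2 .$$
Here $u\in H^{1,(\alpha,\beta)}_0(\omega)$ is an ordinary $H^1_0(\omega)$ function, since $\omega$ lies away from the boundary and the weights are comparable to constants there. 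The error term is bounded by $C'_{r,\delta}\int B|u|^2$. On each piece $\omega\cap B_{2r}(z_i)$ we compare with the frozen coefficients at $z_i$ and apply the constant-coefficient bound. We use $\mu(\omega\cap B_{2r}(z_i))\leq\mu(\omega)$, which is the point where the argument gains instead of losing. This yields
$$\int\nabla(\phi_iu)\cdot A\nabla(\phi_iu) \geq (1+\theta')^{-c}\,C^{\rm FK}\mu(\omega)^{-2/d}\int B|\phi_iu|^2 .$$

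Summing over $i$ gives
$$\int \nabla u\cdot A\nabla u \geq \bigl((1+\theta')^{-c}C^{\rm FK}\mu(\omega)^{-2/d}-C'_{r,\delta}\bigr)\int B|u|^2 .$$
Choose $\theta'$ small in terms of $\theta$; this fixes $r$ and then $C'_{r,\delta}$. Finally choose $\eta$ so small that $C'_{r,\delta}\leq\tfrac{\theta}{2}C^{\rm FK}\eta^{-2/d}$. Then for $\mu(\omega)\leq\eta$ the error is absorbed and the bound $(1-\theta)C^{\rm FK}\mu(\omega)^{-2/d}$ follows. One detail needs care: the balls $B_{2r}(z_i)$ may stick out of $K_\delta$. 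This is harmless, because we can build the partition for $K_{\delta/2}$ and take $r<\delta/4$, so all coefficients stay continuous and nondegenerate on the supports.
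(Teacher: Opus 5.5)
Your proposal is correct and follows essentially the same route as the paper. Both arguments use IMS localization on scale $r$ away from $\partial\Omega$, freeze the coefficients there, apply the Euclidean Faber--Krahn inequality after a linear change of variables, use the monotonicity $\mu(\omega\cap B_{2r}(z_i))\le\mu(\omega)$, and then absorb the localization error by taking $\eta$ small. The differences are cosmetic: the paper integrates the translates $\chi_z(x)=r^{-d/2}\chi((x-z)/r)$ over $z\in\R^d$ instead of using a finite partition of unity, and it bounds the weights $d_\Omega^\alpha$, $d_\Omega^\beta$ explicitly rather than folding them into continuous coefficients $A=d_\Omega^\alpha a$, $B=d_\Omega^\beta b$ as you do.
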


The final ingredient in the proof of Theorem \ref{maindeg} is a Faber--Krahn-type inequality with a non-explicit constant, but which, in contrast to the previous lemma, is valid for all open subsets.

\begin{lemma}\label{fkunifdeg}
	Let $\Omega\subset\R^d$ be a bounded open set with $C^1$-boundary and let $0\leq\alpha<1$ and $\beta>-2$ with $\alpha - \beta <2/d$. Then there is a constant $c>0$ such that for every open set $\omega\subset\Omega$ and every $u\in H^{1,(\alpha,\beta)}_0(\omega)$ one has
	\begin{equation}
		\int_\omega d_\Omega^\alpha\, |\nabla u|^2 \,dx \geq c \left( \int_\omega d_\Omega^{-(\alpha -\beta)d/2}\,dx \right)^{-\frac{2}{d}} \int_\omega d_\Omega^\beta \, |u|^2\,dx \,.
	\end{equation}
\end{lemma}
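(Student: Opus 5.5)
The plan is to derive the inequality from a weighted Sobolev (or Hardy--Sobolev) inequality, followed by Hölder's inequality. Set $\gamma := (\alpha-\beta)d/2 < 1$ and $q := 2d/(d-2)$ for $d\geq 3$; the case $d=2$ is handled by taking any finite large $q$. The goal is to show that, for all $u\in C^1_c(\Omega)$,
\begin{equation}\label{eq:wsob}
\left( \int_\Omega d_\Omega^{-\gamma} |d_\Omega^{(\beta+\gamma)/2} u|^{q}\,dx\right)^{2/q} \lesssim \int_\Omega d_\Omega^\alpha |\nabla u|^2\,dx \,.
\end{equation}
The exponents are chosen so that Hölder's inequality with respect to the measure $d\mu_0 := d_\Omega^{-\gamma}dx$, applied to $v := d_\Omega^{(\beta+\gamma)/2}u$, gives
$$
\int_\omega d_\Omega^\beta |u|^2\,dx = \int_\omega |v|^2\,d\mu_0 \leq \mu_0(\omega)^{1-2/q}\Big(\int_\omega |v|^q\,d\mu_0\Big)^{2/q}.
$$
For $q=2d/(d-2)$ we have $1-2/q = 2/d$, which is exactly the claimed inequality. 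Since $H^{1,(\alpha,\beta)}_0(\omega)\subset H^{1,(\alpha,\beta)}_0(\Omega)$, it suffices to prove \eqref{eq:wsob} on all of $\Omega$ and then extend it by density. A check of the exponents: $\beta+\gamma = \beta + (\alpha-\beta)d/2$ and $(\beta+\gamma)q/2 - \gamma$ should match the scaling of $d_\Omega^\alpha|\nabla u|^2$. Near a flat boundary $\{x_d>0\}$, the dilation $x\mapsto tx$ gives homogeneity $\alpha+d-2$ on the right-hand side of \eqref{eq:wsob}, and $\big(\tfrac{(\beta+\gamma)q}{2}-\gamma+d\big)\tfrac{2}{q}$ on the left-hand side. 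A short computation confirms that these agree, so \eqref{eq:wsob} is scale invariant and a half-space version is plausible.

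The steps are as follows. First, use a partition of unity and $C^1$ boundary charts to reduce \eqref{eq:wsob} to two cases: an interior piece, where it is the usual Sobolev inequality because the weights are bounded above and below, and pieces near the boundary. Flattening a $C^1$ boundary changes $d_\Omega$ only by a factor comparable to $1$, so each boundary piece reduces to the half-space inequality
$$
\Big(\int_{\R^d_+} x_d^{\,s}|u|^q\,dx\Big)^{2/q}\lesssim \int_{\R^d_+} x_d^{\alpha}|\nabla u|^2\,dx,
\qquad s := \tfrac{(\beta+\gamma)q}{2}-\gamma ,
$$
for $u$ supported in a bounded set. The lower-order errors produced by the cutoffs (terms $\int d_\Omega^\alpha |u|^2$ on compact annuli) are controlled via the standard weighted Poincaré/Hardy inequality $\int d_\Omega^{\alpha-2}|u|^2\lesssim\int d_\Omega^\alpha|\nabla u|^2$, which holds for $\alpha<1$ on $C^1$ domains. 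This is where $\alpha<1$ is used. Second, prove the half-space inequality. Here I would use the Caffarelli--Kohn--Nirenberg / Maz'ya weighted Sobolev inequalities with power weights of the distance to a hyperplane. Alternatively, one can argue directly: write $x_d^{\alpha/2}u$, apply a Hardy inequality in $x_d$ together with the one-dimensional weighted estimate, and interpolate. Either way, the necessary conditions are scaling (already checked) and the admissibility conditions $s>-1$ and $0\le \alpha-\tfrac{2s}{q}\cdot\ldots\le$ the gap allowed. I expect these to follow from $\gamma<1$ and $\beta>-2$, and would verify them before committing to the argument.

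The main obstacle is the second step: identifying the precise range of exponents for which the half-space weighted Sobolev inequality holds, and checking that $\alpha<1$, $\beta>-2$ and $\alpha-\beta<2/d$ place us inside it. If the CKN-type result does not cover the whole range, my fallback is to use a smaller $q$ for which it is valid. Hölder's inequality then gives the bound with exponent $1-2/q$ in place of $2/d$. This can be upgraded by scaling only if the inequality is scale invariant, so I would instead aim at the exact $q=2d/(d-2)$. To get it I would prove the half-space bound via a one-dimensional weighted Hardy inequality in $x_d$ combined with the Sobolev inequality in the tangential variables on dyadic layers $\{2^{-j-1}<x_d<2^{-j}\}$, summing over $j$ using $q\ge2$.
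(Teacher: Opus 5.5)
For $d\ge 3$ your plan is the paper's proof. With $q=2d/(d-2)$ one computes $s=\alpha d/(d-2)$, so your weighted Sobolev inequality is exactly Proposition \ref{sobdegdom}. The paper takes it from Opic--Kufner (Theorem \ref{opickufner} with $p=2$ and weight $d_\Omega^\alpha$) and then applies the same H\"older step you use. The exponent bookkeeping you were worried about is unnecessary here. Since $s/q=\alpha/2$, the inequality reads $\|d_\Omega^{\alpha/2}u\|_{2d/(d-2)}\lesssim\|d_\Omega^{\alpha/2}\nabla u\|_2$. This follows from the ordinary Sobolev inequality for $d_\Omega^{\alpha/2}u$ together with the Hardy inequality $\int_\Omega d_\Omega^{\alpha-2}|u|^2\lesssim\int_\Omega d_\Omega^{\alpha}|\nabla u|^2$, which holds for $\alpha<1$.

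The gap is the case $d=2$, which the lemma covers. There $2d/(d-2)=\infty$, and ``taking any finite large $q$'' in your pure Sobolev inequality does not work. Your H\"older step then only gives
\[
\int_\omega d_\Omega^\beta|u|^2\,dx\le C\,\mu_0(\omega)^{1-\frac2q}\int_\omega d_\Omega^\alpha|\nabla u|^2\,dx ,
\]
that is, the factor $\mu_0(\omega)^{-(1-2/q)}$ instead of $\mu_0(\omega)^{-2/d}=\mu_0(\omega)^{-1}$. This is strictly weaker when $\mu_0(\omega)$ is small, which is exactly the regime where the lemma is used. In the Pleijel argument it would give $\nu^{(1)}\lesssim\lambda^{q/(q-2)}$ rather than $O(\lambda)$. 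Your scaling check also fails in this case. For $d=2$ and $s=\beta+\alpha(\frac q2-1)$ the two homogeneities differ by $2(\beta-\alpha+2)/q\neq 0$, so there is no scale-invariant pure Sobolev inequality. As you note yourself, the exponent $1-2/q$ cannot be upgraded.

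The missing ingredient is a multiplicative, Gagliardo--Nirenberg/Ladyzhenskaya-type inequality. This is Proposition \ref{sobintdegdom}:
\[
\Big(\int_\Omega d_\Omega^{\beta+\alpha(\frac q2-1)}|u|^q\,dx\Big)^{\frac2q}\le S^{-1}\Big(\int_\Omega d_\Omega^\alpha|\nabla u|^2\,dx\Big)^{1-\frac2q}\Big(\int_\Omega d_\Omega^\beta|u|^2\,dx\Big)^{\frac2q}.
\]
Insert it into your H\"older step and divide by $(\int_\omega d_\Omega^\beta|u|^2)^{2/q}$. Raising to the power $q/(q-2)$ then gives exactly the factor $\mu_0(\omega)^{-1}$. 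The paper proves this inequality in three steps:
\begin{enumerate}
\item it applies Opic--Kufner in dimension $2$ with $p=2-4/q$ to $|u|^{q/(q-2)}$, for $q\ge 4$ large;
\item it uses H\"older to split $|u|^{4/q}|\nabla u|^{2-4/q}$;
\item it interpolates in $q$ to cover the remaining values of $q$.
\end{enumerate}
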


Given these three lemmas, the proof of Theorem \ref{maindeg} is essentially the same as in the Baouendi--Grushin case. We omit it.


\section{Weyl asymptotics}\label{sec:weyl}

In this section we prove the Weyl asymptotics of Lemma \ref{LemmaW}. We emphasize, in particular, that they are valid under the sole assumption that $\mu^{\rm BG}(\Omega)<\infty$, where $\mu^{\rm BG}$ is defined in \eqref{defmu}.

Our proof is based on a method developed recently in \cite{CaFrLa}, based on earlier work in \cite{Si}. We comment on another method at the end of the proof.

\begin{proof}[Proof of Lemma \ref{LemmaW}]~\\
	Let $H_\Omega$ denote the Dirichlet realization of the operator $-\Delta_1 - |x_1|^{2\alpha} \Delta_2$ in $L^2(\Omega)$. By the Tauberian theorem (see, e.g., \cite[Theorem 10.3]{SiFI}), it suffices to prove
	$$
	\lim_{t\to 0} t^\frac{n+m}{2}\Tr\exp(-tH_\Omega) = (4\pi)^{-\frac{n+m}{2}} \mu^{\rm BG}(\Omega) \,.
	$$
	
	For $x_1\in\R^n$, we write $\Omega(x_1) := \{ x_2\in\R^m:\ (x_1,x_2)\in\Omega\}$. Then, by applying the Golden--Thompson inequality as in \cite{Si}, we obtain, for any $t>0$,
	$$
	\Tr_{L^2(\Omega)} \exp(-tH_\Omega) \leq (4\pi t)^{-\frac{n}{2}} \int_{\R^n} \Tr_{L^2(\Omega(x_1))} \exp(t|x_1|^{2\alpha}\Delta_{\Omega(x_1)}) \,dx_1 \,.
	$$ 
	Here $-\Delta_{\Omega(x_1)}$ denotes the Dirichlet Laplacian in $L^2(\Omega(x_1))$. It is well known that, for any open set $\omega\subset\R^m$ of finite measure and any $s>0$,
	$$
	\Tr_{L^2(\omega)} \exp(s\Delta_\omega) \leq (4\pi)^{-\frac m2}|\omega| \,.
	$$
	Applying this with $\omega = \Omega(x_1)$ and $s=t|x_1|^{2\alpha}$, we arrive at
	\begin{equation}
		\label{eq:kac}
		\Tr\exp(-tH_\Omega) \leq (4\pi t)^{-\frac{n}{2}} \int_{\R^n} (4\pi t|x_1|^{2\alpha})^{-\frac m2}|\Omega(x_1)| \,dx_1 = (4\pi t)^{-\frac{n+m}{2}} \mu^{\rm BG}(\Omega) \,.
	\end{equation}
	This is the claimed upper bound, even in non asymptotic form.\\
	
	For the asymptotic lower bound, we fix a symmetric decreasing function $g \in C^1_c(\R^n)$ with $\|g\|_{L^2(\R^n)} =1$ and set, for $(x_1,\xi_1)\in \R^n\times\R^n$,
	$$
	\psi_{x_1,\xi_1}(y_1) := e^{i\xi_1\cdot x_1} g(y_1-x_1) 
	\qquad\text{for}\ y_1\in\R^n \,.
	$$
	Then, by the Plancherel theorem,
	$$
	\iint_{\R^n\times\R^n} |\psi_{x_1,\xi_1}\rangle\langle\psi_{x_1,\xi_1}| \,\frac{dx_1\,d\xi_1}{(2\pi)^n} = \1_{L^2(\R^n)}\,,
	$$
	and therefore
	$$
	\Tr \exp(-tH_\Omega) = \iint_{\R^n\times\R^n} \Tr_{L^2(\R^{n+m})} \left( \left( |\psi_{x_1,\xi_1}\rangle\langle\psi_{x_1,\xi_1}| \otimes \1_{L^2(\R^m)} \right) \exp(-tH_\Omega) \right) \frac{dx_1\,d\xi_1}{(2\pi)^n} \,.
	$$
	Here we consider $H_\Omega$ as a (not densely defined) operator in $L^2(\R^{n+m})$. As shown in \cite{CaFrLa}, one has, for each $(x_1,\xi_1)\in\R^n\times\R^n$,
	$$
	\Tr_{L^2(\R^{n+m})} \left( |\psi_{x_1,\xi_1}\rangle\langle\psi_{x_1,\xi_1}| \exp(-tH_\Omega) \right) \geq \Tr_{L^2(\R^m)} \exp(-t H(x_1,\xi_1)) \,,
	$$
	where $H(x_1,\xi_1)$ is the not densely defined operator in $L^2(\R^m)$ associated to the quadratic form whose form domain consists of all functions $u\in L^2(\R^m)$ such that $\psi_{x_1,\xi_1}\otimes u$ belongs to the form domain of $H_\Omega$, and
	\begin{align*}
		(u,H(x_1,\xi_1)u) & = ( \psi_{x_1,\xi_1}\otimes u, H_\Omega(\psi_{x_1,\xi_1}\times u)) \\
		& = (|\xi_1|^2 + \|\nabla g\|_2^2) \|u\|_2^2 + |x_1|^{2\alpha} \int_{\R^m} |\nabla_2 u|^2\,dx_2 \,.
	\end{align*}
	Fix $\epsilon>0$ and set $\Omega_\epsilon:=\{ x \in\R^{n+m} :\ \dist(x,\Omega^c)>\epsilon\}$. Also, let $g(x_1)=\epsilon^{-n/2} G(x_1/\epsilon)$ for a symmetric decreasing function $G \in C^1_c(\R^n)$ with $\|G\|_{L^2(\R^n)} =1$ and $\supp G \subset B_1$ (where $B_1$ is the unit ball). Then we see that any $u\in H^1_0(\Omega_\epsilon(x_1))$ belongs to the form domain of $H(x_1,\xi_1)$. It follows that, in the sense of quadratic forms,
	$$
	H(x_1,\xi_1) \leq |\xi_1|^2 + \|\nabla g\|_2^2 - |x_1|^{2\alpha} \Delta_{\Omega_\epsilon(x_1)} \,,
	$$
	and therefore
	$$
	\Tr_{L^2(\R^m)} \exp(-t H(x_1,\xi_1)) \geq e^{-t(|\xi_1|^2+\|\nabla g\|_2^2)} \Tr_{L^2(\R^m)} \exp(t|x_1|^{2\alpha} \Delta_{\Omega_\epsilon(x_1)}) \,.
	$$
	Computing the $\xi$-integral, we arrive at the lower bound
	$$
	\Tr\exp(-tH_\Omega) \geq (4\pi t)^{-\frac{n}{2}} e^{-t\|\nabla g\|_2^2} \int_{\R^n} \Tr_{L^2(\R^m)} \exp(t|x_1|^{2\alpha} \Delta_{\Omega_\epsilon(x_1)}) \,dx_1 \,.
	$$
	It is well-known that, for any open set $\omega\subset\R^m$ of finite measure and any $s>0$, we have
	$$
	\lim_{s\to 0} s^\frac m2 \Tr\exp(s\Delta_\omega) = (4\pi)^{-\frac m2}|\omega| \,.
	$$
	Thus, by Fatou's lemma, we obtain
	$$
	\liminf_{t\to 0} t^\frac{n+m}{2} \Tr\exp(-tH_\Omega) \geq (4\pi)^{-\frac{n}{2}} \int_{\R^n} |x_1|^{-m\alpha} \, |\Omega_\epsilon(x_1)| \,dx_1 = (4\pi)^{-\frac{n+m}{2}} \mu^{\rm BG}(\Omega_\epsilon) \,.
	$$
	Finally, we can let $\epsilon\to 0$ and use monotone convergence to get the claimed asymptotic lower bound.
\end{proof}

The previous proof has the advantage of being rather short and not requiring any major results (besides the Golden--Thompson inequality and Jensen's inequality). On the other hand, it relies on the exact product structure of the Baouendi--Grushin operator. We now sketch a more robust proof of Lemma \ref{LemmaW} that, however, relies on results by Birman and Solomyak.

\begin{proof}[Alternative proof of Lemma \ref{LemmaW}]
	Let $N(\lambda,A)$ denote the number of eigenvalues $\leq\lambda$, counting multiplicities, of a lower semibounded operator $A$.
	
	Let $\epsilon, R >0$ and set
	$$
	\Omega_{\epsilon,R}:= \{ x\in\Omega :\ |x_1| >\epsilon \,,\ |x|<R \} \,.
	$$
	By the variational principle, we have
	$$
	N(\lambda,H_\Omega) \geq N(\lambda, H_{\Omega_{\epsilon,R}}) \,.
	$$
	Since $H_{\Omega_{\epsilon,R}}$ is a uniformly elliptic operator on a bounded domain, we have
	\begin{equation}\label{eq:weylstandard}
		\lim_{\lambda\to\infty} \lambda^{-\frac{n+m}2} N(\lambda, H_{\Omega_{\epsilon,R}}) = \mathcal W(\R^{n+m}) \mu^{\rm BG}(\Omega_{\epsilon,R}) \,;
	\end{equation}
	see, for instance, \cite[Theorem 5.8]{BiSo0} for a much more general result. Thus,
	$$
	\liminf_{\lambda\to\infty} \lambda^{-\frac{n+m}2} N(\lambda, H_{\Omega}) \geq \mathcal W(\R^{n+m}) \mu^{\rm BG}(\Omega_{\epsilon,R}) \,.
	$$
	Since this is valid for any $\epsilon,R>0$, monotone convergence gives the claimed asymptotic lower bound.
	
	To prove the asymptotic upper bound, we fix again $\epsilon,R>0$ and choose $C^1$ functions $\chi_0$ and $\chi_1$ on $\R^{n+m}$ such that $\chi_0^2 + \chi_1^2 = 1$, $\supp\chi_0 \subset\{x\in\R^{n+m}:\ |x_1|>\epsilon \,, |x|<R\}$ and $\chi_0 =1$ in $\{ x\in\R^{n+m}:\ |x_1|>2\epsilon \,, |x|<R/2\}$.
	Then, by the IMS localization formula,
	$$
	H_\Omega = \chi_0 \left( H_\Omega - |\nabla\chi_0|^2 - |\nabla\chi_1|^2 \right) \chi_0 + \chi_1 \left( H_\Omega - |\nabla\chi_0|^2 - |\nabla\chi_1|^2 \right) \chi_1
	$$
	and therefore, by the variational principle,
	$$
	N(\lambda,H_\Omega) \geq N(\lambda, \chi_0 \left( H_\Omega - |\nabla\chi_0|^2 - |\nabla\chi_1|^2 \right) \chi_0) + N(\lambda, \chi_1 \left( H_\Omega - |\nabla\chi_0|^2 - |\nabla\chi_1|^2 \right) \chi_1) \,.
	$$
	Let $\Lambda:= \sup (|\nabla\chi_0|^2 + |\nabla\chi_1|^2)$. Then
	\begin{align*}
		& N(\lambda, \chi_0 \left( H_\Omega - |\nabla\chi_0|^2 - |\nabla\chi_1|^2 \right) \chi_0) \leq N(\lambda + \Lambda, H_{\Omega_{\epsilon,R}}) \,,\\
		& N(\lambda, \chi_1 \left( H_\Omega - |\nabla\chi_0|^2 - |\nabla\chi_1|^2 \right) \chi_1) \leq N(\lambda + \Lambda, H_{\Omega \setminus \overline{\Omega_{2\epsilon,R/2}}}) \,.
	\end{align*}
	Thus, in view of \eqref{eq:weylstandard}, we have
	$$
	\limsup_{\lambda\to\infty} \lambda^{-\frac{n+m}2} N(\lambda, H_{\Omega}) \leq \mathcal W(\R^{n+m}) \mu^{\rm BG}(\Omega_{\epsilon,R}) + \limsup_{\mu\to\infty}  \mu^{-\frac{n+m}2} N(\mu, H_{\Omega \setminus \overline{\Omega_{2\epsilon,R/2}}}) \,.
	$$
	By dominated convergence, the first term gives the claimed asymptotic bound as $\epsilon\to 0$ and $R\to\infty$ and it remains to prove that the second term vanishes in this limit. This is a consequence of the following lemma, applied with $\Omega$ replaced by $\Omega \setminus \overline{\Omega_{2\epsilon,R/2}}\,$, and using $\mu^{\rm BG}(\Omega\setminus \overline{\Omega_{2\epsilon,R/2}})\to 0$ as $\epsilon\to 0$ and $R\to\infty$ by dominated convergence. Thus, the proof of Lemma \ref{LemmaW} is complete, once the following lemma is proved.
\end{proof}

\begin{lemma}
	For any $n$ and $m$ there is a constant $K$ such that for any $\alpha\geq 0$, for any open set $\Omega\subset\R^{n+m}$ with $\mu^{\rm BG}(\Omega)<\infty$ and for any $\lambda\geq 0$, we have
	\begin{equation}\label{eq:K}
	N(\lambda,H_\Omega) \leq K \mu^{\rm BG}(\Omega) \lambda^\frac{n+m}{2} \,.
	\end{equation}
\end{lemma}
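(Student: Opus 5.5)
This is a CLR-type bound with the weighted measure, and the natural route is through the heat trace. The non-asymptotic upper bound \eqref{eq:kac} from the first proof of Lemma \ref{LemmaW} already gives, for every $t>0$,
$$
\Tr\exp(-tH_\Omega) \leq (4\pi t)^{-\frac{n+m}{2}} \mu^{\rm BG}(\Omega) \,.
$$
That argument used only the Golden--Thompson inequality and the bound $\Tr\exp(s\Delta_\omega)\leq(4\pi s)^{-m/2}|\omega|$. Neither depends on $\alpha$ except through the weight, which is exactly absorbed into $\mu^{\rm BG}$.

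The plan is to convert this trace bound into a counting bound with the elementary Chebyshev-type estimate. Each eigenvalue $\lambda_j\leq\lambda$ of $H_\Omega$ contributes at least $e^{-t\lambda}$ to the trace, so
$$
N(\lambda,H_\Omega) \leq e^{t\lambda}\Tr\exp(-tH_\Omega) \leq e^{t\lambda}(4\pi t)^{-\frac{n+m}{2}}\mu^{\rm BG}(\Omega) \,.
$$
For $\lambda>0$ I would choose $t=1/\lambda$. This gives \eqref{eq:K} with
$$
K = e\,(4\pi)^{-\frac{n+m}{2}} \,,
$$
which depends only on $n+m$ and is uniform in $\alpha$ and $\Omega$. (Optimizing over $t$ improves the constant slightly, but that is irrelevant here.)

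The case $\lambda=0$ needs a short separate argument. Here \eqref{eq:K} says $N(0,H_\Omega)=0$, that is, $0$ is not an eigenvalue. Letting $t\to\infty$ in the trace bound shows that the multiplicity of the eigenvalue $0$ is $\lim_{t\to\infty}\Tr\exp(-tH_\Omega)$, and this limit is $0$.

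The only step needing care is the justification of the Golden--Thompson step for the Dirichlet realization on an arbitrary open set. One needs the operator inequality in trace form, with the restriction to $\Omega$ handled by domain monotonicity. The tensor decomposition into $-\Delta_1$ on $\R^n$ and the fiberwise operator $-|x_1|^{2\alpha}\Delta_{\Omega(x_1)}$ requires $H_\Omega$ viewed as a not densely defined operator on $L^2(\R^{n+m})$. Since the paper has already used \eqref{eq:kac} in the first proof, I would simply invoke it; the rest is routine.
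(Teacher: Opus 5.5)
Your proposal is correct and follows the paper's proof: you invoke the non-asymptotic heat-trace bound \eqref{eq:kac}, use $N(\lambda,H_\Omega)\le e^{t\lambda}\Tr\exp(-tH_\Omega)$, and take $t$ proportional to $1/\lambda$, where the paper notes that $t=\frac{n+m}{2\lambda}$ is the optimal choice. Your separate treatment of $\lambda=0$, letting $t\to\infty$, is correct and fills in a point the paper leaves implicit.
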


\begin{proof}
	It follows from \eqref{eq:kac}  that, for any $\lambda\geq 0$ and any $t\geq 0$, we have
	$$
	N(\lambda,H_\Omega) \leq e^{t\lambda} \Tr\exp(-tH_\Omega) \leq (4\pi t)^{-\frac{n+m}{2}} e^{t\lambda} \mu^{\rm BG}(\Omega) \,.
	$$
	Choosing $t = c/\lambda$ with a constant $c$ (the optimal choice is $c=\frac{n+m}{2}$), we arrive at the claimed bound.	
\end{proof}

\begin{remark}
	The above proof gives an explicit value for the constant $K$ in \eqref{eq:K}. If instead of working with the heat trace $\Tr\exp(-tH_\Omega)$ we work with the Riesz means $\Tr(H_\Omega - \lambda)^{3/2}_-$ and instead of using the Golden--Thompson inequality from \cite{Si} we use the operator-valued Lieb--Thirring inequality by Laptev and Weidl \cite{LaWe}, we obtain a smaller value for the constant $K$.
\end{remark}

\begin{remark}
	In the previous proof we used \eqref{eq:kac}. Alternatively, we could use the fact that, by the maximum principle, the heat kernel of $H_\Omega$ is pointwise bounded from above by the heat kernel of $H_{\R^{n+m}}$. Using the bound \eqref{eq:heatkernelbound} below, we arrive again at a bound $\Tr\exp(-tH_\Omega) \leq K t^{-\frac{n+m}{2}} \mu^{\rm BG}(\Omega)$, except that now the constant $K$ might depend on $\alpha$ (in addition to $n$ and $m$). It is still independent of $\Omega$, which is what is needed in the alternative proof of Lemma \ref{LemmaW}.
\end{remark}


\section{Almost-Euclidean Faber--Krahn-type inequalities}\label{s3}

Our goal in this section is to prove the almost-Euclidean Faber--Krahn inequalities in Lemmas \ref{almostfk} and \ref{almostfkdeg}. These inequalities say that, for sets of small measure and away from the singular set, the Faber--Krahn constant holds almost with the Euclidean constant.\\

\subsection{The almost-Euclidean Faber--Krahn inequality in the Baouendi--Grushin case}~\\
In this subsection we prove Lemma \ref{almostfk}.

We fix a real function $\chi\in C^1_c(\R^n)$ with $\supp\chi \subset \overline{B_1}$ and $\|\chi\|_2=1$, and set, with a parameter $r>0$ to be specified later,
$$
\chi_a(x_1):= r^{-\frac{n}{2}} \chi((x_1-a)/r)
\qquad\text{for all}\ x_1,a\in\R^n \,.
$$
Note that
\begin{equation}
	\label{eq:ims0}
	\int_{\R^n} \chi_a(x_1)^2 \,da = 1
	\qquad\text{for all}\ x_1\in\R^n \,,
\end{equation}
which implies
\begin{equation}
	\label{eq:ims0a}
	\int_{\R^n} \nabla_1 \chi_a(x_1) \;   \chi_a (x_1) \,da = 0
	\qquad\text{for all}\ x_1\in\R^n \,.
\end{equation}
By a continuous version of the IMS formula, we have for $u\in S^{1,(\alpha)}(\omega)$,
\begin{equation}
	\label{eq:ims}
	\int_{\omega} |\nabla^{(\alpha)}u|^2\,dx = \int_{\R^n} \int_\omega |\nabla^{(\alpha)}(\chi_a u)|^2 \,dx\,da - r^{-2}\|\nabla\chi\|_2^2 \int_\omega |u|^2\,dx \,.
\end{equation}

We fix $a\in\R^n$ with $|a|>r$ and bound
$$
\int_\omega |\nabla^{(\alpha)}(\chi_a u)|^2 \,dx \geq \int_\omega (|\nabla_1(\chi_a u)|^2 + (|a| - r)^{2\alpha} |\nabla_2(\chi_a u)|^2) \,dx \,.
$$
Let us write
$$
u(x) = \widetilde u(x_1, (|a|-r)^{-\alpha} x_2) \,.
$$
It follows that
$$
\int_\omega (|\nabla_1(\chi_a u)|^2 + (|a| - r)^{2\alpha} |\nabla_2(\chi_a u)|^2) \,dx
= (|a|-r)^{m\alpha} \int_{\widetilde\omega} (|\nabla_1(\chi_a \widetilde u)|^2 + |\nabla_2(\chi_a \widetilde u)|^2) \,dy\,,
$$
with
$$
\widetilde\omega := \{ y\in\R^{n+m} :\ (y_1, (|a|-r)^{\alpha} y_2)\in\omega \} \,.
$$
By the Euclidean Faber--Krahn inequality (abbreviating $C^{\rm FK}=C^{\rm FK}(\R^{n+m})$), we have
\begin{align*}
	\int_{\widetilde\omega} (|\nabla_1(\chi_a \widetilde u)|^2 + |\nabla_2(\chi_a \widetilde u)|^2) \,dy & \geq C^{\rm FK} |\widetilde\omega\cap \supp\chi_a |^{-\frac2{n+m}} \int_{\widetilde\omega} |\widetilde u|^2\,dy \\
	& = C^{\rm FK} |\widetilde\omega\cap \supp\chi_a |^{-\frac2{n+m}} (|a|-r)^{-m\alpha} \int_{\omega} | \chi_a u|^2\,dx \,.
\end{align*}
Thus, we have shown
$$
\int_\omega |\nabla^{(\alpha)}(\chi_a u)|^2 \,dx \geq C^{\rm FK} |\widetilde\omega\cap\supp\chi_a|^{-\frac2{n+m}} \int_{\omega} |\chi_a u|^2\,dx \,.
$$

It remains to estimate the measure on the right side. We have
\begin{align*}
	|\widetilde\omega\cap\supp\chi_a| & = (|a|-r)^{-m\alpha} \int_{\omega\cap\supp\chi_a} \,dx \leq \int_{\omega\cap\supp\chi_a} (|x_1|-2r)^{-m\alpha} \,dx \\
	& \leq \int_{\omega} (|x_1|-2r)^{-m\alpha} \,dx \,.
\end{align*}
Assuming $r<\delta/2$ we have, when $\inf_{x\in\omega}|x_1|\geq\delta$, that
$$
(|x_1|-2r)^{-1} \leq (\delta - 2r)^{-1} \delta |x_1|^{-1} \,,
$$
so
$$
|\widetilde\omega\cap\supp\chi_a| \leq (1- 2r\delta^{-1})^{-m\alpha} \mu(\omega) \,.
$$
Correspondingly, our inequality becomes
\begin{equation}
	\label{eq:almostfkproof}
	\int_\omega |\nabla^{(\alpha)}(\chi_a u)|^2 \,dx \geq (1- 2r\delta^{-1})^{\frac{2m\alpha}{n+m}} C^{\rm FK} \mu(\omega)^{-\frac2{n+m}} \int_{\omega} |\chi_a u|^2\,dx \,.
\end{equation}
We recall that we have proved this inequality under the assumptions $|a|>r$ and $r<\delta/2$.

Meanwhile, for $|a|\leq r$ (still assuming $r<\delta/2$) we have $\chi_a u = 0$, because $\supp\chi_a \subset \{ |x_1|\leq |a|+r\} \subset\{ |x_1|\leq 2r \}$ and $\inf_{x\in\omega} |x_1| \geq \delta$. Therefore, inequality \eqref{eq:almostfkproof} is trivially valid for $|a|\leq r$ as well. Integrating the inequality with respect to $a$ over $\R^n$ and recalling \eqref{eq:ims0} and \eqref{eq:ims}, we arrive at
$$
\int_\omega |\nabla^{(\alpha)} u|^2\,dx \geq \left( (1- 2r\delta^{-1})^{\frac{2m\alpha}{n+m}} c^{\rm FK} \mu(\omega)^{-\frac2{n+m}} - r^{-2} \|\nabla\chi\|_2^2 \right)
\int_{\omega} | u|^2\,dx \,.
$$
When $\mu(\omega)\leq\eta$, we obtain from this
$$
\int_\omega |\nabla^{(\alpha)} u|^2\,dx \geq \left( (1- 2r\delta^{-1})^{\frac{2m\alpha}{n+m}} - r^{-2} \eta^\frac{2}{n+m} (C^{\rm FK})^{-1} \|\nabla\chi\|_2^2 \right) C^{\rm FK} \mu(\omega)^{-\frac2{n+m}}
\int_{\omega} | u|^2\,dx \,.
$$
This is valid for all $r<\delta/2\,$.

Now given $\theta>0$, we first choose $r>0$ so small that
$$
(1- 2r\delta^{-1})^{\frac{2m\alpha}{n+m}} \geq 1- \theta/2 \,,
$$
and then we choose $\eta>0$ so small that
$$
r^{-2} \eta^\frac{2}{n+m} (C^{\rm FK})^{-1} \|\nabla\chi\|_2^2 \leq \theta/2 \,.
$$
This completes the proof of Lemma \ref{almostfk}.
\qed
~\\


\subsection{The almost-Euclidean Faber--Krahn inequality in the case of boundary degeneration}~\\
In this subsection we prove Lemma \ref{almostfkdeg}.

Similarly as in the proof of Lemma \ref{almostfk}, we fix a real function $\chi\in C^1_c(\R^d)$ with $\supp\chi\subset \overline{B_1}$ and $\|\chi\|_2=1$ and set, with a parameter $r>0$ to be specified later,
$$
\chi_z(x) := r^{-\frac d2} \chi((x-z)/r)
\qquad\text{for all}\ x,z\in\R^d \,.
$$

Note that
\begin{align*}
	& \frac12\left( \nabla (\chi_z^2 u)\cdot a \nabla u + \nabla u\cdot a\nabla (\chi_z^2 u) \right) = \nabla(\chi_z u)\cdot a \nabla(\chi_z u) - |u|^2 \nabla\chi_z\cdot a\nabla\chi_z \,.
\end{align*}
As a consequence we obtain the IMS-type formula
$$
\int_{\omega} d_\Omega^\alpha \nabla u \cdot a\nabla u\,dx = \int_{\R^d} \int_\omega d_\Omega^\alpha \,\nabla(\chi_z u) \cdot a\nabla(\chi_z u)\,dx \,dz - \int_\omega \Lambda\,  |u|^2\,dx
$$
where
$$
\Lambda(x):= r^{-2} d_\Omega(x)^\alpha \int_{\R^d} \nabla\chi(y) \cdot a(x) \nabla\chi(y)\,dy \,.
$$
Clearly, we have
$$
|\Lambda| \leq r^{-2} \,d_\Omega^\alpha\, \|a\|_{L^\infty(\Omega)} \, \|\nabla\chi\|_2^2 \,.
$$
Thus,
$$
\int_\omega \Lambda |u|^2\,dx \leq r^{-2} \sup_\omega d_\Omega^{\alpha-\beta} \|a\|_{L^\infty(\Omega)} (\inf_\Omega b)^{-1} \|\nabla\chi\|_2^2 \int_\omega d_\Omega^\beta \,b\, |u|^2\,dx \,.
$$
If $\alpha -\beta \geq 0$, then
$$
\sup_\omega d_\Omega^{\alpha-\beta} \leq \mathrm{diam}(\Omega)^{\alpha -\beta}<\infty \,.
$$
If $\alpha - \beta<0$ we use the assumption of Lemma \ref{almostfkdeg} that $\inf_\omega d_\Omega \geq \delta>0$ and deduce
$$
\sup_\omega d_\Omega^{\alpha-\beta} \leq \delta^{\alpha -\beta}<\infty \,.
$$
Thus, in either case we have, provided $\mu(\omega)\leq\eta\,$,
$$
\int_\omega \Lambda |u|^2\,dx \leq C_{r,\delta} \int_\omega d_\Omega^\beta\, b\, |u|^2\,dx 
\leq \eta^{\frac2d} C_{r,\delta}\, \mu(\omega)^{-\frac 2d}\int_\omega d_\Omega^\beta\, b\, |u|^2\,dx \,.
$$
This is the desired bound on the localization error.

Let us deal with the main term. Since $a$ is uniformly positive definite and uniformly continuous, for every $\epsilon>0$ there is a $\rho>0$ such that
$$
\xi\cdot a(x) \xi \geq (1-\epsilon) \xi\cdot a(y) \xi
\qquad\text{for all}\ \xi\in\R^d \,, x,y\in\Omega \ \text{with}\ |x-y|\leq\rho \,.
$$
Let us consider $z\in\R^d$ with $d_\Omega(z)> r$ and assume that $r\leq\rho$. Then
$$
\int_\omega d_\Omega^\alpha \nabla(\chi_z u) \cdot a\nabla(\chi_z u)\,dx
\geq (1-\epsilon)(d_\Omega(z)-r)^\alpha \int_\omega \nabla(\chi_z u) \cdot a(z)\nabla(\chi_z u)\,dx \,.
$$
We change coordinates such that the matrix $a(z)$ is diagonal, apply the Euclidean Faber--Krahn inequality (abbreviated $C^{\rm FK}=C^{\rm FK}(\R^d)$) and change back to the original coordinates. This gives
$$
\int_\omega \nabla(\chi_z u) \cdot a(z)\nabla(\chi_z u)\,dx \geq C^{\rm FK} |\omega\cap\supp\chi_z|^{-\frac2d} (\det a(z))^{\frac 1d} \int_\omega |\chi_z u|^2\,dx
$$
Note that
$$
\det a(z) \geq (1-\epsilon)^d \det a(x)
\qquad\text{for all}\ x\in\supp\chi_z \,,
$$
so
$$
|\omega\cap\supp\chi_z|^{-\frac2d} (\det a(z))^{\frac 1d}
\geq (1-\epsilon) \left( \int_{\omega\cap\supp\chi_z} \frac{dx}{\sqrt{\det a(x)}} \right)^{-\frac 2d}.
$$
Finally, since $b$ is positive and uniformly continuous, we may assume, after decreasing $\rho$ if necessary, that
$$
b(x) \geq (1-\epsilon) b(y)
\qquad\text{for all}\ x,y\in\Omega \ \text{with}\ |x-y|\leq\rho \,.
$$
It follows that
$$
\left( \int_{\omega\cap\supp\chi_z} \frac{dx}{\sqrt{\det a(x)}} \right)^{-\frac 2d} \int_\omega |\chi_z u|^2\,dx \geq (1-\epsilon) \left( \int_{\omega\cap\supp\chi_z} b(x)^\frac d2 \frac{dx}{\sqrt{\det a(x)}} \right)^{-\frac 2d} \int_\omega b |\chi_z u|^2\,dx
$$
Finally,
$$
\int_\omega b |\chi_z u|^2\,dx \geq (d_\Omega(z)+ (\sgn\beta) r)^{-\beta} \int_\omega d_\Omega^\beta b |\chi_z u|^2\,dx \,.
$$
We note that
$$
(d_\Omega(z)-r)^\alpha (d_\Omega(z)+ (\sgn\beta) r)^{-\beta}
\geq (d_\Omega(x)-2r)^\alpha (d_\Omega(x)+ 2(\sgn\beta) r)^{-\beta}
\qquad\text{for}\ x\in\supp\chi_z \,,
$$
and, if $r<\delta/2$, we have, when $\inf_\omega d_\Omega \geq \delta$, that
$$
(d_\Omega(x)-2r)^\alpha (d_\Omega(x)+ 2(\sgn\beta) r)^{-\beta}
\leq (1- 2r\delta^{-1})^\alpha (1+ 2(\sgn\beta)r\delta^{-1})^{-\beta} d_\Omega(x)^{\alpha-\beta} \,.
$$
Thus,
\begin{align*}
	& (d_\Omega(z)-r)^\alpha (d_\Omega(z)+ (\sgn\beta) r)^{-\beta} \left( \int_{\omega\cap\supp\chi_z} b^\frac d2 \frac{dx}{\sqrt{\det a}} \right)^{-\frac 2d} \\
	&\quad  \geq (1- 2r\delta^{-1})^\alpha (1+ 2(\sgn\beta)r\delta^{-1})^{-\beta} \left( \int_{\omega\cap\supp\chi_z} d_\Omega^{-(\alpha -\beta)d/2} b^\frac d2 \frac{dx}{\sqrt{\det a}} \right)^{-\frac 2d}\,.
\end{align*}
To summarize, we have shown that, under the assumptions $d_\Omega(z)>r$, $r\leq\rho$ and $r<\delta/2$, we have
\begin{align*}
	& \int_\omega d_\Omega^\alpha \nabla(\chi_z u) \cdot a\nabla(\chi_z u)\,dx \\
	& \quad \geq (1-\epsilon)^3 (1- 2r\delta^{-1})^\alpha (1+ 2(\sgn\beta)r\delta^{-1})^{-\beta} C^{\rm FK} \mu(\omega)^{-\frac 2d} \int_\omega d_\Omega^\beta b |\chi_z u|^2\,dx \,.
\end{align*}

Continuing to assume $r<\delta/2$, we see that the same bound holds trivially when $d_\Omega(z) \leq r$, because in this case we have $\chi_z u=0$. Indeed, $\supp\chi_z \subset\{ d_\Omega \leq r\} + \overline{B_r} \subset \{ d_\Omega \leq 2r \}$ and $\inf_\omega d_\Omega \geq\delta$.

Therefore we can integrate the bound with respect to $z\in\R^d$ and we obtain
$$
\begin{array}{l}
	\int_{\R^d} \int_\omega d_\Omega^\alpha \nabla(\chi_z u) \cdot a\nabla(\chi_z u)\,dx \,dz \\
	\qquad \geq (1-\epsilon)^3 (1- 2r\delta^{-1})^\alpha (1+ 2(\sgn\beta)r\delta^{-1})^{-\beta} C^{\rm FK} \mu(\omega)^{-\frac 2d} \int_\omega d_\Omega^\beta b | u|^2\,dx \,.
\end{array}
$$
Inserting this into the IMS formula gives
\begin{align*}
	\int_\omega d_\Omega^\alpha \nabla u\cdot a \nabla u\,dx & \geq \left( (1-\epsilon)^3 (1- 2r\delta^{-1})^\alpha (1+ 2(\sgn\beta)r\delta^{-1})^{-\beta} C^{\rm FK} - \eta^\frac2d C_{r,\delta} \right) \\
	& \quad \times \mu(\omega)^{-\frac 2d} \int_\omega d_\Omega^\beta\, b \,  | u|^2\,dx \,.
\end{align*}

It is now easy to finish the proof. We recall that the parameters $\theta>0$ and $\delta>0$ are given. We first choose $r>0$ and $\epsilon>0$ so small that
$$
(1-\epsilon)^3 (1- 2r\delta^{-1})^\alpha (1+ 2(\sgn\beta)r\delta^{-1})^{-\beta} \geq 1- \theta/2
$$
and then we choose $\eta>0$ so small that
$$
\eta^\frac2d C_{r,\delta} (C^{\rm FK})^{-1} \leq \theta/2 \,.
$$
This completes the proof of Lemma \ref{almostfkdeg}.
\qed


\section{Uniform Faber--Krahn inequalities}\label{s5}

In this section we will prove the uniform Faber--Krahn inequalities stated  in Lemmas \ref{fkunif} and \ref{fkunifdeg}. In contrast to the inequalities in the previous section, they are valid for arbitrary open subsets, but with a possibly much worse constant.\\

\subsection{The uniform Faber--Krahn inequality in the Baouendi--Grushin case}~\\
In this subsection we prove Lemma \ref{fkunif}. We rely on Sobolev-type inequalities for the Baouendi--Grushin operator. These inequalities involve specific weights and we have not been able to find them in the literature, so we provide complete proofs. We recall the definition of the Baouendi--Grushin Sobolev spaces $S^{1,(\alpha)}(\R^{n+m})$ from Subsection~\ref{sec:strategybg}. We need two different arguments, one that works when $n=1$ and one that works when $n+m>2$. The resulting Sobolev inequalities are summarized as follows.

\begin{proposition}\label{sobint}
	Let $m\geq 1$. There is a constant $S>0$ such that for all $\alpha\geq 0$ and all $u\in S^{1,(\alpha)}(\R^{1+m})$,
	$$
	\left( \int_{\R^{1+m}} |\nabla^{(\alpha)} u|^2\,dx \right)^{1/2} \left( \int_{\R^{1+m}} |u|^2\,dx \right)^{1/2} \geq S \left( \int_{\R^{1 +m}} |u|^\frac{2(1+m)}{m} |x_1|^\alpha \,dx \right)^\frac{m}{1+m}.
	$$
\end{proposition}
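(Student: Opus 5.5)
The plan is to reduce the proposition to a weighted $L^1$-Sobolev (Gagliardo--Nirenberg) inequality in $\R^{1+m}$ for $v=|u|^2$, and then to obtain that inequality from the Loomis--Whitney inequality. The $L^2$ statement then follows from
$$
|\nabla^{(\alpha)}|u|^2| \leq 2|u|\,|\nabla^{(\alpha)}u|
$$
and Cauchy--Schwarz. Concretely, the intermediate goal is that for all nonnegative $v\in C^1_c(\R^{1+m})$,
$$
\int_{\R^{1+m}} v^{\frac{m+1}{m}}\,|x_1|^\alpha\,dx \leq \Big(\int_{\R^{1+m}}|\partial_1 v|\,dx\Big)^{\frac1m}\prod_{j=1}^m\Big(\int_{\R^{1+m}}|x_1|^\alpha\,|\partial_{2,j}v|\,dx\Big)^{\frac1m}.
$$
By the arithmetic--geometric mean inequality, the right side is at most $C_m\big(\int|\nabla^{(\alpha)}v|\,dx\big)^{\frac{m+1}{m}}$, with $C_m$ independent of $\alpha$. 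Inserting $v=|u|^2$ then gives the claim with exponent $\frac{2(1+m)}{m}$ and power $\frac{m}{1+m}$, after raising to the power $\frac m{m+1}$. The passage from $C^1_c$ to general $u\in S^{1,(\alpha)}(\R^{1+m})$ is by density and Fatou's lemma; truncation or $\sqrt{|u|^2+\epsilon^2}-\epsilon$ handles the nonsmoothness of $|u|^2$.

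To prove the weighted $L^1$ inequality I would run Gagliardo's argument anisotropically. By the fundamental theorem of calculus in $x_1$,
$$
v(x) \leq g(x_2):=\int_\R|\partial_1 v(t,x_2)|\,dt ,
$$
and $g$ does not depend on $x_1$. Similarly, for each $j$, $v(x)\leq h_j(x)$, where $h_j:=\int_\R|\partial_{2,j}v|\,dx_{2,j}$ does not depend on $x_{2,j}$. The key point is where the weight goes. I split
$$
v^{\frac{m+1}{m}}|x_1|^\alpha = v^{\frac1m}\prod_{j=1}^m \big(|x_1|^\alpha v\big)^{\frac1m} \leq g^{\frac1m}\prod_{j=1}^m H_j^{\frac1m},
\qquad H_j:=|x_1|^\alpha h_j .
$$
This puts the weight entirely into the $m$ factors coming from the $x_2$-derivatives. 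Since $|x_1|^\alpha$ is independent of $x_{2,j}$, each $H_j$ is still independent of $x_{2,j}$, and $g$ is independent of $x_1$.

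So we have $m+1=d$ functions on $\R^d$, each independent of exactly one distinct coordinate. The Loomis--Whitney inequality $\int\prod_{i}f_i(\hat x^i)\,dx\leq\prod_i\|f_i\|_{L^{d-1}(\R^{d-1})}$, applied to $f_0=g^{1/m}$ and $f_j=H_j^{1/m}$, gives
$$
\int v^{\frac{m+1}{m}}|x_1|^\alpha\,dx \leq \|g\|_{L^1(\R^m)}^{\frac1m}\prod_{j=1}^m\|H_j\|_{L^1(\R^m)}^{\frac1m}.
$$
Here $\|g\|_1=\int|\partial_1v|\,dx$ and $\|H_j\|_1=\int|x_1|^\alpha|\partial_{2,j}v|\,dx$, which is exactly the intermediate inequality.

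The step I expect to be the main obstacle is this exact bookkeeping of the weight. The weight can only be absorbed into factors that are independent of a variable other than $x_1$, and this works precisely because $n=1$: then only one factor, the one built from $\partial_1 v$, must be $x_1$-independent. This is also why the proof gives the unusual mixed form (a product of $L^2$ norms) and an $\alpha$-independent constant $S$. The remaining details are routine: the case $m=1$, where Loomis--Whitney reduces to Fubini with $L^1$ norms, and the density argument, which needs $\nabla^{(\alpha)}|u|^2=2\,\mathrm{Re}(\bar u\nabla^{(\alpha)}u)$ for $u\in S^{1,(\alpha)}$.
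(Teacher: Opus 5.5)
Your proof is correct. Its second half ($v=|u|^2$, $|\nabla^{(\alpha)}|u|^2|\le 2|u|\,|\nabla^{(\alpha)}u|$, Cauchy--Schwarz) is exactly the paper's Step~2. The difference is in how you obtain the weighted $L^1$ inequality.

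The paper does not rerun Gagliardo's argument. It substitutes $u(x_1,x_2)=v((\alpha+1)^{-1}|x_1|^\alpha x_1,x_2)$. Since $dy_1=|x_1|^\alpha\,dx_1$, this turns $\int|\nabla^{(\alpha)}u|\,dx$ and $\int|u|^{\frac{1+m}{m}}|x_1|^\alpha\,dx$ exactly into $\int|\nabla v|\,dy$ and $\int|v|^{\frac{1+m}{m}}\,dy$. The claim then reduces to the classical isoperimetric inequality, with the sharp constant $I(\R^{1+m})$. This is shorter, but it relies on $x_1$ being one-dimensional; for $n\ge 2$ the same substitution produces additional weights, cf.\ Section~\ref{sec:final}. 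Also, because the inverse substitution is not $C^1$ at $x_1=0$, the isoperimetric inequality is really applied to a $W^{1,1}$ function rather than a $C^1_c$ one.

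Your Loomis--Whitney bookkeeping stays in the original variables. It gives an explicit $\alpha$-independent (non-sharp) constant and the finer anisotropic product bound. It is also more flexible than your remark ``precisely because $n=1$'' suggests. For general $n$, putting the weight only onto the $m$ factors built from $\partial_{2,j}v$ gives
\[
\int_{\R^{d}} v^{\frac{d}{d-1}}\,|x_1|^{\frac{\alpha m}{d-1}}\,dx \le \Big(\int_{\R^{d}}|\nabla^{(\alpha)}v|\,dx\Big)^{\frac{d}{d-1}}, \qquad d=n+m.
\]
With $v=|u|^2$ and H\"older against $\int_\omega|x_1|^{-\alpha m}\,dx$, this yields Lemma~\ref{fkunif} for all $n,m$ (with constant $1/4$). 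That bypasses Proposition~\ref{sob} and the heat kernel bounds behind it. The role of $n=1$ is only to make the weight exponent $\alpha m/(d-1)$ equal to $\alpha$.

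The regularization $\sqrt{|u|^2+\epsilon^2}-\epsilon$ is unnecessary. For $u\in C^1_c$ one already has $|u|^2=u\bar u\in C^1_c$, so only density of $C^1_c$ is used. The paper likewise takes that density for granted, and it is built into $S^{1,(\alpha)}_0(\omega)$ where the proposition is applied.
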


\begin{proposition}\label{sob}
	Let $n,m\geq 1$ with $n+m>2$ and $\alpha\geq 0$. Then there is a constant $S>0$ such that for all $u\in S^{1,(\alpha)}(\R^{n+m})$,
	$$
	\int_{\R^{n+m}} |\nabla^{(\alpha)}u|^2\,dx \geq S \left( \int_{\R^{n+m}} |u|^\frac{2(n+m)}{n+m-2} |x_1|^\frac{2m\alpha}{n+m-2} \,dx \right)^\frac{n+m-2}{n+m}.
	$$
\end{proposition}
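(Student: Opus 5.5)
The plan is to obtain Proposition \ref{sob} from a pointwise representation of $u$ by a fractional integral adapted to the Grushin geometry. The required boundedness of that integral from $L^2$ into the weighted $L^p$ space will then be proved by a dyadic decomposition in $|x_1|$.

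\emph{Consistency check.} Write $N:=n+m$ and $p:=\frac{2N}{N-2}$. Under the anisotropic dilations $u_\lambda(x)=u(\lambda x_1,\lambda^{1+\alpha}x_2)$ the left side scales like $\lambda^{2-Q}$ with $Q=n+m(1+\alpha)$. The right side scales like $\lambda^{-(Q+2m\alpha/(N-2))\cdot 2/p}$, and one checks that the two exponents agree. Under the translations $x_2\mapsto x_2+b$ both sides are invariant. So the inequality is compatible with all the symmetries of the problem, and we may work with $u\in C^1_c(\R^{N})$ and conclude by density.

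\emph{Step 1: representation.} Let $d$ be the Carnot--Carath\'eodory distance of the vector fields $\partial_{x_1^j}$, $|x_1|^\alpha\partial_{x_2^k}$. It is known (Franchi--Lanconelli, Franchi--Guti\'errez--Wheeden) that
\[
|B_d(x,r)|\simeq r^{N}(r+|x_1|)^{m\alpha},
\]
and that one has the representation
\[
|u(x)|\leq C\int_{\R^N}|\nabla^{(\alpha)}u(y)|\,\frac{d(x,y)}{|B_d(x,d(x,y))|}\,dy .
\]
This can be derived from Gaussian bounds on the heat kernel of $-\Delta_1-|x_1|^{2\alpha}\Delta_2$. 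In particular, since $|B_d(x,r)|\simeq|B_d(y,r)|$ for $r=d(x,y)$, the kernel is bounded by
\[
C\, d(x,y)^{1-N}\min\bigl\{(d(x,y)+|x_1|)^{-m\alpha},(d(x,y)+|y_1|)^{-m\alpha}\bigr\}.
\]

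\emph{Step 2: two-weight estimate.} It then suffices to show that the operator $T$ with this kernel maps $L^2(dy)$ boundedly into $L^p(w\,dx)$, where $w=|x_1|^{2m\alpha/(N-2)}$.

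First decompose $\R^N$ into slabs $A_k=\{2^k\leq|x_1|<2^{k+1}\}$. Inside a slab, after the rescaling $x_2=2^{k\alpha}z_2$, the metric is comparable to the Euclidean one at scales $\lesssim 2^k$. On such pairs the kernel behaves like $2^{-km\alpha}|x-y|_{\rm resc}^{1-N}$. The Euclidean Hardy--Littlewood--Sobolev inequality (order $1$, $L^2\to L^p$), combined with the Jacobian $2^{km\alpha}$ of the rescaling, produces exactly the factor $2^{2km\alpha/(N-2)}\simeq w$ on $A_k$. This is how the weight arises.

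Next treat the off-diagonal pairs, namely $d(x,y)\gtrsim\max(|x_1|,|y_1|)$ or $x$ and $y$ in distant slabs. Here the kernel is dominated by $d^{1-N-m\alpha}$, that is, by the homogeneous-dimension potential. I would estimate these pairs by a Schur-type or weak-type argument and then sum over $k$ with geometric decay, interpolating via Marcinkiewicz if needed. The condition $N>2$ is used to keep $p<\infty$ and to make the Euclidean HLS available.

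\emph{Main obstacle.} I expect the main obstacle to be the off-diagonal part, namely controlling the interaction between slabs near $x_1=0$, where the weight degenerates and $|B_d|$ switches from $r^N|x_1|^{m\alpha}$ to $r^{Q}$. If a direct Schur estimate fails, I would instead use a Sawyer/P\'erez-type testing condition for two-weight fractional integrals on the space of homogeneous type $(\R^N,d,dx)$. The needed condition is
\[
\sup_B\, |B|^{1/N-1/2}\Bigl(\int_B w\Bigr)^{1/p}<\infty,
\]
strengthened by a power bump. Via the formula for $|B_d(x,r)|$ above, this reduces to an elementary computation on balls centered near and far from $\{x_1=0\}$.
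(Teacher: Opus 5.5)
Your route, the pointwise bound $|u|\lesssim T|\nabla^{(\alpha)}u|$ with kernel $d(x,y)/|B_d(x,d(x,y))|$ followed by an $L^2\to L^p(w)$ estimate for $T$, is genuinely different from the paper's. Your scaling check and the local slab computation are correct: rescaled Euclidean HLS, with the weight exactly compensating the Jacobian. However, the step you flag as the main obstacle is not carried out, and the criterion you propose for it is false.

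Since $p=\frac{2N}{N-2}$ gives $\frac1N-\frac12+\frac1p=0$, your quantity $|B|^{1/N-1/2}(\int_B w)^{1/p}$ equals $\bigl(\frac{1}{|B|}\int_B w\bigr)^{1/p}\simeq (r+|x_1|)^{m\alpha/N}$. This is unbounded: take $B=B_d(x,1)$ with $|x_1|\to\infty$ and $\alpha>0$. You transplanted the Euclidean condition, where $|B|^{1/N}\simeq r(B)$. Testing $T$ on $\1_B$ shows that the relevant quantity is $r(B)\,|B|^{-1/2}\,w(B)^{1/p}=r(B)|B|^{-1/N}\bigl(\frac{1}{|B|}\int_B w\bigr)^{1/p}$; it is necessary, and with a power bump on $w$ sufficient by Sawyer--Wheeden. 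Here $r(B)|B|^{-1/N}\simeq(r+|x_1|)^{-m\alpha/N}$ exactly cancels the growth of $w$, so the corrected condition does hold.

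In fact you need neither the slabs nor two-weight theory. Let $M$ be the maximal function over $d$-balls and split $Tf(x)$ at $d(x,y)=R$ into dyadic annuli. Doubling bounds the inner part by $R\,Mf(x)$. Cauchy--Schwarz together with the lower bound $|B_d(x,s)|\gtrsim s^N|x_1|^{m\alpha}$ (valid for all $s>0$) bounds the outer part by $R^{1-N/2}|x_1|^{-m\alpha/2}\|f\|_2$, since $N>2$. Optimizing in $R$,
\[
|Tf(x)|\lesssim Mf(x)^{1-\frac2N}\bigl(|x_1|^{-\frac{m\alpha}{2}}\|f\|_2\bigr)^{\frac2N},
\qquad\text{hence}\qquad
|Tf(x)|^p\,|x_1|^{\frac{2m\alpha}{N-2}}\lesssim Mf(x)^2\,\|f\|_2^{p-2},
\]
because $p(1-\frac2N)=2$ and $\frac{m\alpha p}{N}=\frac{2m\alpha}{N-2}$. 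The maximal theorem on the doubling space $(\R^N,d,dx)$ finishes the proof.

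The paper instead uses Rumin's spectral argument. It writes $\|\mathcal L^{1/2}u\|^2=\int\int_0^\infty|\1(\mathcal L>E)u|^2\,dE\,dx$ and bounds the low-energy part pointwise by
$(\mathcal L^{-1}\1(\mathcal L\le E))(x,x)^{1/2}\|\mathcal L^{1/2}u\|\lesssim E^{\frac{N-2}{4}}|x_1|^{-\frac{m\alpha}{2}}\|\mathcal L^{1/2}u\|$.
This follows from the on-diagonal bound $e^{-t\mathcal L}(x,x)\le Kt^{-N/2}|x_1|^{-m\alpha}$ of Robinson--Sikora. The Hedberg argument above is its real-variable counterpart: $E$ plays the role of $R^{-2}$, and both rest on the same weak volume bound $r^N|x_1|^{m\alpha}$.

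The paper's proof needs only that diagonal heat-kernel bound and the spectral theorem. Yours needs the representation formula for the non-smooth Grushin fields; cite it from Franchi--Guti\'errez--Wheeden, since it is not an immediate consequence of Gaussian heat-kernel bounds. In return you get a pointwise potential estimate, and the same Hedberg argument yields the weighted inequalities with $\nabla^{(\alpha)}u\in L^q$, $1<q<N$. The $L^2$ spectral method does not directly give these.
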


Before giving the proof of these two propositions, let us show how they imply Lemma~\ref{fkunif}.

\begin{proof}[Proof of Lemma \ref{fkunif}]
	Let $\omega\subset\R^{n +m}$ be an open set with $\mu(\omega)<\infty$ and let $u\in S^{1,(\alpha)}_0(\omega)$. Note that its extension by zero belongs to $S^{1,(\alpha)}(\R^{n+m})$.
	
	We distinguish the cases $n=1$ and $n+m>2$. (These cases are not exclusive, but they cover all possibilities.) If $n=1$, we have, by H\"older's inequality,
	$$
	\int_\omega |u|^2\,dx \leq \left( \int_\omega \frac{dx}{|x_1|^{\alpha m}} \right)^\frac 1{1+m} \left( \int_\omega |u|^\frac{2(1+m)}{m} |x_1|^\alpha\,dx \right)^\frac{m}{1+m}.
	$$
	Bounding the second term on the right side using Proposition \ref{sobint}, we obtain
	$$
	\int_\omega |u|^2\,dx \leq S^{-1} \left( \int_\omega \frac{dx}{|x_1|^{\alpha m}} \right)^\frac 1{m+1} \left( \int_{\R^{1+n}} |\nabla^{(\alpha)} u|^2\,dx \right)^{1/2} \left( \int_{\R^{1+n}} |u|^2\,dx \right)^{1/2}.
	$$
	Equivalently,
	$$
	\int_\omega |u|^2\,dx \leq S^{-2} \left( \int_\omega \frac{dx}{|x_1|^{\alpha m}} \right)^\frac 2{m+1} \int_{\R^{1+n}} |\nabla^{(\alpha)} u|^2\,dx \,,
	$$
	which proves the claimed bound.
	
	If $n+m>2$, we have, by H\"older's inequality,
	$$
	\int_\omega |u|^2\,dx \leq \left( \int_\omega \frac{dx}{|x_1|^{\alpha m}} \right)^\frac 2{n+m} \left( \int_\omega |u|^\frac{2(n+m)}{n+m-2} |x_1|^\frac{2m\alpha}{n+m-2}\,dx \right)^\frac{n+m-2}{n+m}
	$$
	and we can bound the second term on the right side using Proposition \ref{sob}.
\end{proof}

\begin{proof}[Proof of Proposition \ref{sobint}]
	Let $I(\R^d)$ denote the constant in the isoperimetric inequality on $\R^d$, that is,
	$$
	I(\R^d) = d^{(d-1)/d} \, |\Sph^{d-1}|^{1/d}.
	$$
	
	\emph{Step 1.} We shall prove the \emph{sharp} inequality
	$$
	\int_{\R^{1+m}} |\nabla^{(\alpha)} u|\,dx \geq I(\R^{1+m}) \left( \int_{\R^{1+m}} |u|^\frac{1+m}{m} |x_1|^\alpha \,dx \right)^\frac{m}{1+m}
	$$
	for, say, $u\in C^1_c(\R^{1+m})$. Indeed, given $u$, define $v\in C^1_c(\R^{1+m})$ by
	$$
	u(x_1,x_2) = v((\alpha+1)^{-1} |x_1|^{\alpha} x_1,x_2) \,.
	$$
	Then a short computation shows that
	\begin{equation}
		\label{eq:covgrad}
			|(\nabla^{(\alpha)}u)(x)| = (\alpha+1)^{-1} |x_1|^\alpha |(\nabla v)((\alpha+1)^{-1} |x_1|^{\alpha} x_1,x_2)| \,.
	\end{equation}
	Therefore, by a change of variables,
	$$
	\int_{\R^{1+m}} |\nabla^{(\alpha)} u|\,dx = \int_{\R^{1+m}} |\nabla v| \,dy \,.
	$$
	The same change of variables also shows that
	$$
	\int_{\R^{1+m}} |u|^\frac{n+m}{n+m-1} |x_1|^\alpha \,dx = \int_{\R^{1+m}} |v|^\frac{1+m}{m} \,dy \,.
	$$
	Therefore, the inequality claimed at the beginning of this step follows from the well-known `isoperimetric inequality'
	$$
	\int_{\R^{1+m}} |\nabla v|\,dy \geq I(\R^{1+m}) \left( \int_{\R^{1+m}} |v|^\frac{1+m}{m} \,dy \right)^\frac{m}{1+m}.
	$$

	\medspace
	
	\emph{Step 2.} In the inequality in Step 1, we replace $u$ by $u^2$ and note that
	$$
	|\nabla^{(\alpha)} u^2|  \leq 2 \, |u| \, |\nabla^{(\alpha)} u| \,.
	$$
	This, together with the Schwarz inequality, implies the bound in the proposition with $S=2^{-1} I(\R^{1+m})$.
\end{proof}

The following proof draws some inspiration from work by M. Rumin \cite{Ru}, who in turn says that he is motivated by work of Chemin and Xu \cite{CX}.

\begin{proof}[Proof of Proposition \ref{sob}]
	We write $\mathcal L = -\Delta_1 - |x_1|^{2\alpha} \Delta_2$ for the operator in $L^2(\R^{n+m})$ and note that
	\begin{equation}
		\label{eq:ruminrepresentation}
			\int_{\R^{n+m}} |\nabla^{(\alpha)} u|^2\,dx = \|\mathcal L^{1/2} u\|^2 = \int_{\R^{n+m}} \int_0^\infty |u^{>E}(x)|^2 \,dE\,dx \,,
	\end{equation}
	where
	$$
	u^{>E} := \1(\mathcal L>E) u \,.
	$$
	Indeed, by the spectral theorem, we have
	$$
	\|\mathcal L^{1/2} u\|^2 = \int_0^\infty \| \1(\mathcal L>E) u \|^2 \,dE = \int_0^\infty \| u^{>E} \|^2 \,dE \,,
	$$
	so that the second identity in \eqref{eq:ruminrepresentation} follows by Fubini's theorem.
	
	Defining $u^{\leq E}$ in the obvious way, we have
	\begin{equation}
		\label{eq:triangle}
			|u(x)|\leq |u^{\leq E}(x)| + |u^{>E}(x)| \,.
	\end{equation}
	We write
	\begin{align*}
		|u^{\leq E}(x)|^2 & = \left| \int_{\R^{n+m}} (\mathcal L^{-1/2}\1(\mathcal L\leq E))(x,x') (\mathcal L^{1/2}u)(x')\,dx' \right|^2 \\
		& \leq \int_{\R^{n+m}} |(\mathcal L^{-1/2}\1(\mathcal L\leq E))(x,x')|^2 \,dx' \, \|\mathcal L^{1/2} u\|^2 \\
		& = (\mathcal L^{-1}\1(\mathcal L\leq E))(x,x) \, \|\mathcal L^{1/2} u\|^2 \,.
	\end{align*}
	
	We shall show below that there exists $C>0$ such that
	\begin{equation}
		\label{eq:kernelbound}
		0 \leq (\mathcal L^{-1}\1(\mathcal L\leq E))(x,x) \leq C E^\frac{n+m-2}{2}\, |x_1|^{-m\alpha} \,.
	\end{equation}
	Accepting this for the moment, we obtain from \eqref{eq:triangle} that
	$$
	|u^{>E}(x)| \geq \left( |u(x)| - C^{1/2} E^\frac{n+m-2}{4} |x_1|^{-\frac{m\alpha}2} \|\mathcal L^{1/2} u\| \right)_+
	$$
	and, consequently, by a change of variables, there exists $C' >0$ such that
	$$
	\int_0^\infty |u^{>E}(x)|\,dE = C'\, |u(x)|^\frac{2(n+m)}{n+m-2} \, |x_1|^\frac{2m\alpha}{n+m-2}\, \|\mathcal L^{1/2} u\|^{-\frac 4{n+m-2}} \,.
	$$
	Here $C'$ is given explicitly in terms of $C$, $m$, $n$ and $\alpha$.
	Inserting this into \eqref{eq:ruminrepresentation} gives
	$$
	\|\mathcal L^{1/2} u\|^2 \geq C' \,\|\mathcal L^{1/2} u\|^{-\frac 4{n+m-2}} \int_{\R^{n+m}}\,  |u(x)|^\frac{2(n+m)}{n+m-2} \, |x_1|^\frac{2m\alpha}{n+m-2} \,dx \,,
	$$
	which is the claimed Sobolev-type inequality.
	
	It remains to prove \eqref{eq:kernelbound}. The left inequality is clear, since the operator $\mathcal L^{-1}\1(\mathcal L\leq E)$ is nonnegative. The main tool for the proof of the right inequality is the heat kernel bound
	\begin{equation}
		\label{eq:heatkernelbound0}
			0 \leq e^{-t\mathcal L}(x,x) \leq K\, t^{-\frac{n+m}{2}} (\sqrt t + |x_1|)^{-m\alpha}
	\end{equation}
	with a constant $K$ that depends on $n$, $m$ and $\alpha$. This bound is from \cite{RoSi}. Indeed, Theorem 6.4 in that reference shows that $e^{-t\mathcal L}(x,x)$ is bounded by a constant, depending on $n$, $m$, and $\alpha$, multiplied by  $|B(x;t^{1/2})|^{-1}$, where the adapted balls are defined at the beginning of Section 5. Bounds on this volume are given in Proposition 5.1. We note that we need these results for $\delta_1 = \delta_1' = 0$ and $\delta_2 = \delta_2' = \alpha$. Consequently, we have $D=D' = n+(1+\alpha)m$; see Proposition 3.1. The bound in Proposition 5.1 reads
	$$
	|B(x;r)| \sim
	\begin{cases}
		r^{n+(1+\alpha)m} & \text{if}\ r\geq |x_1| \,,\\
		r^{n+m} |x_1|^{m\alpha} & \text{if}\ r\leq |x_1| \,.
	\end{cases}
	$$
	Thus, we have $|B(x;r)| \sim r^{n+m} (r+ |x_1|)^{m\alpha}$, which, for $r=\sqrt t$, gives the claimed bound \eqref{eq:heatkernelbound0}
	
	In fact, in what follows, we shall only use the weaker bound that there exists $K>0$ such that
	\begin{equation}
		\label{eq:heatkernelbound}
			0 \leq e^{-t\mathcal L}(x,x) \leq K t^{-\frac{n+m}{2}} |x_1|^{-m\alpha} \,.
	\end{equation}
	(If, instead, we would use the upper bound $Kt^{-\frac{n+(1+\alpha)m}2}$ in the following proof, we would arrive at the standard unweighted Sobolev inequality for the Grushin operator.)
	
	To prove \eqref{eq:kernelbound} we write
	$$
	\mathcal L^{-1} \1(\mathcal L\leq E) = \int_0^\infty e^{-u\mathcal L} \1(\mathcal L\leq E)\,du\,,
	$$
	and we split the integral at $u=1/E$. For $u\geq 1/E$ we use the inequality
	$$
	e^{-u\mathcal L} \1(\mathcal L\leq E) \leq e^{-u\mathcal L}\,, 
	$$
	which implies
	$$
	\left( e^{-u\mathcal L} \1(\mathcal L\leq E) \right)(x,x) \leq e^{-u\mathcal L}(x,x) \leq K u^{-\frac{n+m}{2}} |x_1|^{-m\alpha} \,.
	$$
	For $u< 1/E$, we use the inequality
	$$
	e^{-u\mathcal L}\1(\mathcal L\leq E) \leq \1(\mathcal L\leq E) \leq e \, e^{-(1/E)\mathcal L} \,,
	$$
	which implies
	$$
	\left( e^{-u\mathcal L} \1(\mathcal L\leq E) \right)(x,x) \leq e K E^\frac{n+m}2 |x_1|^{-m\alpha} \,.
	$$
	Thus,
	$$
	\left( \mathcal L^{-1} \1(\mathcal L\leq E) \right)(x,x) \leq \int_0^{1/E} e K E^\frac{n+m}2 |x_1|^{-m\alpha} \,du + \int_{1/E}^\infty K u^{-\frac{n+m}{2}}  |x_1|^{-m\alpha} \,du \,.
	$$
	Computing the integrals, we arrive at \eqref{eq:kernelbound}.
\end{proof}


\bigskip

\subsection{The uniform Faber--Krahn-type inequality in the case of boundary degeneration}~\\
In this subsection we prove Lemma \ref{fkunifdeg}, using the following weighted Sobolev-type inequalities. We recall the definition of the weighted Sobolev spaces $H^{1,(\alpha,\beta)}_0(\Omega)$ from Subsection \ref{sec:strategydeg}.

\begin{proposition}\label{sobintdegdom}
	Let $\Omega\subset\R^2$ be a bounded open set with $C^{0,1}$-boundary. Let $\alpha\in[0,1)$, $\beta\in\R$ and $q\in[2,\infty)$. Then there is a constant $S>0$ such that, for all $u\in H^{1,(\alpha,\beta)}_0(\Omega)$,
	$$
	\left( \int_\Omega d_\Omega^\alpha\, |\nabla u|^2\,dx \right)^\frac{q-2}q \left( \int_\Omega d_\Omega^\beta\, |u|^2\,dx \right)^\frac 2q \geq S \left( \int_\Omega d_\Omega^{\beta+\alpha(\frac q2-1)} |u|^q  \,dx \right)^\frac2q .
	$$
\end{proposition}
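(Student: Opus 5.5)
We will prove the inequality by localizing to a Whitney decomposition of $\Omega$, applying the unweighted two-dimensional Gagliardo--Nirenberg inequality on each cube, and controlling the resulting lower-order terms by a weighted Hardy inequality. By density it suffices to take $u\in C^1_c(\Omega)$.

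Fix a Whitney covering of $\Omega$ by closed squares $Q$ of side $\ell_Q$ with $c_1\ell_Q\le d_\Omega\le c_2\ell_Q$ on a slightly enlarged square $Q^*\subset\Omega$, where the family $\{Q^*\}$ has bounded overlap. On each $Q^*$ every power $d_\Omega^{s}$ is comparable to $\ell_Q^{s}$, with constants depending only on $s$ and the Whitney constants.

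On the square $Q^*$ we use the scale-invariant form of the two-dimensional Gagliardo--Nirenberg (Ladyzhenskaya-type) inequality without boundary conditions:
$$
\|v\|_{L^q(Q^*)}^2 \le C\left(\|\nabla v\|_{L^2(Q^*)}^2+\ell_Q^{-2}\|v\|_{L^2(Q^*)}^2\right)^{1-\frac2q}\|v\|_{L^2(Q^*)}^{\frac4q}.
$$
The constant $C$ depends only on $q$, by scaling from the unit square, and the exponent $1-2/q$ is exactly the two-dimensional one. This is the place where $d=2$ enters.

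We then insert the weights. Set $\gamma:=\beta+\alpha(\frac q2-1)$, $A_Q:=\int_{Q^*}(d_\Omega^\alpha|\nabla u|^2+d_\Omega^{\alpha-2}|u|^2)\,dx$ and $B_Q:=\int_{Q^*}d_\Omega^\beta|u|^2\,dx$. Since $\ell_Q^{\gamma\cdot 2/q}=\ell_Q^{\alpha(1-2/q)}\ell_Q^{\beta\cdot 2/q}$, the local inequality raised to the power $q/2$ gives
$$
\int_{Q}d_\Omega^{\gamma}|u|^q\,dx\le C A_Q^{\frac q2-1}B_Q .
$$
We sum over $Q$ using $\sum_Q A_Q^{\frac q2-1}B_Q\le(\sup_Q A_Q)^{\frac q2-1}\sum_Q B_Q\le(\sum_Q A_Q)^{\frac q2-1}\sum_Q B_Q$, which is valid because $q\ge2$. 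Bounded overlap then yields
$$
\int_\Omega d_\Omega^\gamma|u|^q\le C\left(\int_\Omega d_\Omega^\alpha|\nabla u|^2+\int_\Omega d_\Omega^{\alpha-2}|u|^2\right)^{\frac q2-1}\int_\Omega d_\Omega^\beta|u|^2 .
$$
No restriction on $\beta$ is needed, consistent with the statement.

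The main obstacle is removing the zeroth-order term. For this we need the weighted Hardy inequality
$$
\int_\Omega d_\Omega^{\alpha-2}|u|^2\,dx\le C_H\int_\Omega d_\Omega^\alpha|\nabla u|^2\,dx,\qquad u\in C^1_c(\Omega),
$$
which holds for bounded Lipschitz domains precisely when $\alpha<1$ (Nečas, Kufner). To prove it, we would flatten the boundary locally by bi-Lipschitz charts, in which $d_\Omega$ is comparable to the distance to the flat boundary. In each chart we apply the one-dimensional Hardy inequality $\int_0^\infty t^{\alpha-2}|f|^2\,dt\le\frac{4}{(1-\alpha)^2}\int_0^\infty t^\alpha|f'|^2\,dt$ for $f(0)=0$ along transversal lines. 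We patch the charts with a partition of unity, and absorb the interior piece and the cutoff errors, which are supported where $d_\Omega\gtrsim1$, using the unweighted Poincaré inequality on the bounded set $\Omega$.

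Inserting the Hardy inequality into the previous display gives the claimed inequality with $S=\left(C(1+C_H)^{\frac q2-1}\right)^{-2/q}$.
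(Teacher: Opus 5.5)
Your argument is correct, and it takes a genuinely different route from the paper's. The paper does not localize. It applies the Opic--Kufner weighted Sobolev inequality (Theorem \ref{opickufner}) with $d=2$, $p=2(q-2)/q$ and weight exponent $\frac2q(\beta+\alpha(\frac q2-1))$ to $v=|u|^{q/(q-2)}$. It then splits $|u|^{4/q}|\nabla u|^{2-4/q}d_\Omega^{\frac2q(\beta+\alpha(\frac q2-1))}=(d_\Omega^\beta|u|^2)^{2/q}(d_\Omega^\alpha|\nabla u|^2)^{(q-2)/q}$ by H\"older's inequality. That theorem requires $p\geq1$ and a weight exponent below $p-1$, so this only works for $q\geq4$ and $q>2(\beta-\alpha+2)/(1-\alpha)$. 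The remaining $q$ are recovered by H\"older interpolation between such a $q_0$ and the exponent $2$.

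Your route, Whitney localization of the scale-invariant Gagliardo--Nirenberg inequality followed by the Hardy inequality, treats all $q\in[2,\infty)$ and $\beta\in\R$ in one pass. It also shows exactly where each hypothesis enters: $d=2$ only through the exponent $1-2/q$, $\alpha<1$ and the Lipschitz boundary only through Hardy, and $\beta$ not at all. Its black boxes are more elementary. With the local Sobolev inequality on cubes in place of Gagliardo--Nirenberg, the same localization would also give Proposition \ref{sobdegdom} without Theorem \ref{opickufner}. The paper's route, in exchange, is a few lines once a citable theorem is granted and needs no Whitney or extension machinery. It also parallels the proof of Proposition \ref{sobint}: a lower-exponent Sobolev inequality applied to a power of $|u|$, then H\"older.

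One correction to your sketch of the Hardy inequality, which you could instead simply cite. The cutoff errors $u\nabla\chi_j$ from overlapping boundary charts are not supported where $d_\Omega\gtrsim1$. Moreover, the unweighted Poincar\'e inequality bounds $\int_\Omega|u|^2$ by $\int_\Omega|\nabla u|^2$, which is not controlled by $\int_\Omega d_\Omega^\alpha|\nabla u|^2$. Since $\alpha\geq0$, the interior piece and all cutoff errors are $\lesssim\int_\Omega|u|^2\,dx$. So what you actually need is the weighted Poincar\'e inequality $\int_\Omega|u|^2\,dx\leq C\int_\Omega d_\Omega^\alpha|\nabla u|^2\,dx$. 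This follows from $|f(t)|^2\leq\frac{t^{1-\alpha}}{1-\alpha}\int_0^t s^\alpha|f'(s)|^2\,ds$ (for $f(0)=0$) along transversal segments in a boundary collar. Combine it with the unweighted Poincar\'e inequality for $\eta u$, where $\eta$ is an interior cutoff whose gradient is supported in that collar.
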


\begin{proposition}\label{sobdegdom}
	Let $d\geq 3$ and $\alpha\in [0,1)$. Let $\Omega\subset\R^d$ be a bounded open set with $C^{0,1}$-boundary. Then there is a constant $S>0$ such that, for all $u\in H^{1,(\alpha,\beta)}_0(\Omega)$,
	$$
	\int_\Omega d_\Omega^\alpha \, |\nabla u|^2\,dx \geq S \left( \int_\Omega d_\Omega^\frac{\alpha d}{d-2} \, |u|^\frac{2d}{d-2} \,dx \right)^\frac{d-2}d.
	$$
\end{proposition}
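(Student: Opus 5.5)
We have to prove Proposition \ref{sobdegdom}: for $d\geq3$, $\alpha\in[0,1)$,
$$
\int_\Omega d_\Omega^\alpha |\nabla u|^2\,dx \geq S \Big(\int_\Omega d_\Omega^{\frac{\alpha d}{d-2}}|u|^{\frac{2d}{d-2}}\,dx\Big)^{\frac{d-2}{d}}.
$$
The plan is to substitute $w = d_\Omega^{\alpha/2}u$, or more precisely $w=\rho^{\alpha/2}u$ with $\rho$ a regularized distance (smooth in $\Omega$, comparable to $d_\Omega$, $|\nabla\rho|\lesssim 1$). The right side then becomes $\|w\|_{2d/(d-2)}^2$, which the standard Sobolev inequality in $\R^d$ bounds by $C\|\nabla w\|_2^2$, since $w$ extended by zero lies in $H^1(\R^d)$. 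A density argument with $u\in C^1_c(\Omega)$ is enough, so all functions are compactly supported in $\Omega$.

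Next we expand
$$
\nabla w=\rho^{\alpha/2}\nabla u+\tfrac{\alpha}{2}\rho^{\alpha/2-1}(\nabla\rho)\, u ,
$$
which gives
$$
\|\nabla w\|_2^2\le 2\int\rho^\alpha|\nabla u|^2+C\int \rho^{\alpha-2}|u|^2 .
$$
The whole problem therefore reduces to the weighted Hardy inequality
$$
\int_\Omega d_\Omega^{\alpha-2}|u|^2\,dx\le C\int_\Omega d_\Omega^\alpha|\nabla u|^2\,dx ,\qquad u\in C^1_c(\Omega).
$$

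This Hardy inequality is the main step, and it is where $\alpha<1$ and the Lipschitz boundary enter. The approach is to prove it first in the one-dimensional model $\int_0^\infty t^{\alpha-2}|f|^2\,dt\le \frac{4}{(1-\alpha)^2}\int_0^\infty t^\alpha|f'|^2\,dt$ for $f\in C^1_c(0,\infty)$. This follows by integrating by parts against $(t^{\alpha-1})'$ and applying Cauchy--Schwarz, and the constant stays finite exactly when $\alpha\neq1$. We then localize near $\partial\Omega$ using a finite cover by bi-Lipschitz boundary charts in which $\Omega$ is locally a Lipschitz epigraph $\{x_d>\phi(x')\}$. In each chart $d_\Omega$ is comparable to $x_d-\phi(x')$, so we apply the one-dimensional inequality along vertical lines and integrate in $x'$. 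Away from the boundary $d_\Omega$ is bounded above and below, and there the interior part is controlled by the global term: since $\Omega$ is bounded, the Poincaré inequality $\int|u|^2\lesssim\int|\nabla u|^2$ holds, and $d_\Omega^\alpha\ge c$ on the interior piece. A partition of unity subordinate to this cover produces commutator terms of the form $\int |u|^2$ with bounded cutoff gradients. These are absorbed by the boundary Hardy estimate together with Poincaré, using $d_\Omega^{\alpha}\gtrsim$ const on the support of the cutoff gradients. Alternatively, one could appeal to the known fact that for Lipschitz domains $\int d^{s-2}|u|^2\lesssim\int d^s|\nabla u|^2$ holds for $s<1$ (Nečas/Kufner).

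Combining the two bounds gives
$$
\|d_\Omega^{\alpha/2}u\|_{2d/(d-2)}^2\le C\int d_\Omega^\alpha|\nabla u|^2 ,
$$
which is the claim. The weight $\beta$ plays no role here, since it only affects the ambient space. The expected obstacle is keeping track of the Lipschitz-chart comparison $d_\Omega\asymp x_d-\phi(x')$ uniformly, together with the regularity of the weight $d_\Omega^{\alpha/2}$. Using a regularized distance avoids differentiating $d_\Omega$ itself.
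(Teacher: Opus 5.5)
Your reduction is correct, but it takes a different route from the paper. The paper proves Proposition~\ref{sobdegdom} in one line by quoting the Opic--Kufner weighted Sobolev inequality (Theorem~\ref{opickufner}) with $p=2$, $\gamma=\alpha$; the hypothesis $\gamma<p-1$ is exactly $\alpha<1$. You instead split the statement into two pieces: the unweighted Sobolev inequality for $w=\rho^{\alpha/2}u$, and the Ne\v{c}as weighted Hardy inequality $\int_\Omega d_\Omega^{\alpha-2}|u|^2\,dx\lesssim\int_\Omega d_\Omega^{\alpha}|\nabla u|^2\,dx$ for $\alpha<1$.

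Your route shows that $\alpha<1$ and the Lipschitz boundary are used only in the Hardy step. It also essentially reduces Theorem~\ref{opickufner} itself to a Hardy inequality: with $w=\rho^{\gamma/p}u$ and $\int d_\Omega^{\gamma-p}|u|^p\lesssim\int d_\Omega^{\gamma}|\nabla u|^p$ for $\gamma<p-1$, you get exactly the weight $d_\Omega^{d\gamma/(d-p)}$. In particular, your argument covers the extension to $\alpha<0$ mentioned in Section~\ref{sec:final}. The paper's citation is shorter and is reused for $p=2(q-2)/q$ in Proposition~\ref{sobintdegdom}. Both approaches quote one external weighted inequality, so if you rely on the Ne\v{c}as--Kufner theorem, as you offer to, your proof is complete.

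The self-contained proof of the Hardy inequality that you sketch, however, does not close as written.
\begin{itemize}
\item In general, the gradients of the boundary-chart cutoffs $\eta_j$ reach $\partial\Omega$ in the overlaps. So it is false that $d_\Omega^\alpha\gtrsim 1$ on the support of the cutoff gradients.
\item Near $\partial\Omega$ the commutators $\int d_\Omega^\alpha|\nabla\eta_j|^2|u|^2$ are at most $\epsilon^2\|\nabla\eta_j\|_\infty^2\int d_\Omega^{\alpha-2}|u|^2$, so that part can be absorbed.
\item On $\{d_\Omega\ge\epsilon\}$, however, the commutators are bounded only by a fixed, non-small multiple of $\int d_\Omega^{\alpha-2}|u|^2$, the very quantity you are estimating.
\item Poincar\'e does not break this circle. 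Applied to $u$, it needs $\int_\Omega|\nabla u|^2$, which $\int d_\Omega^\alpha|\nabla u|^2$ does not control when $\alpha>0$. Applied to a cut-off of $u$, it produces a new commutator of the same kind.
\end{itemize}
What is missing is an independent bound of $\|u\|_{L^2(K)}$, for $K\Subset\Omega$, in terms of $\int d_\Omega^\alpha|\nabla u|^2$.

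One way to close the argument is to avoid chart cutoffs altogether. Along vertical segments in each chart, use the finite-interval Hardy inequality
\[
\int_0^T t^{\alpha-2}|f|^2\,dt\le \tfrac{4}{(1-\alpha)^2}\int_0^T t^\alpha|f'|^2\,dt ,
\]
valid for $\alpha<1$ and $f$ vanishing only near $0$ (apply the classical Hardy inequality to $|f'|\1_{(0,T)}$). This gives $\int_{\{d_\Omega<c_0\}}d_\Omega^{\alpha-2}|u|^2\lesssim\int d_\Omega^\alpha|\nabla u|^2$. Then apply Poincar\'e to $\zeta u$, with $\zeta=1$ on $\{d_\Omega\ge c_0\}$ and $\supp\zeta\subset\{d_\Omega\ge c_0/2\}$. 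The resulting commutator lives in $\{c_0/2\le d_\Omega<c_0\}$, where the collar bound already controls it.
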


Before giving the proof of these two propositions, let us show that they imply the Faber--Krahn-type inequality.

\begin{proof}[Proof of Lemma \ref{fkunifdeg}]
	Let $d\geq 3$ and let $\alpha\geq 0$ and $\beta>-2$ with $\alpha-\beta<2/d$. If $\omega\subset\Omega$ is open and $u\in H^{1,(\alpha,\beta)}_0(\omega)$, then, by H\"older's inequality,
	$$
	\int_\omega d_\Omega^\beta \, |u|^2\,dx \leq \left( \int_\omega d_\Omega^{- \frac{d(\alpha-\beta)}2}\,dx \right)^\frac2d \left( \int_\omega d_\Omega^\frac{\alpha d}{d-2} |u|^\frac{2d}{d-2} \,dx \right)^\frac{d-2}{d}.
	$$
	According to Proposition \ref{sobdegdom} we have
	$$
	\left( \int_\omega d_\Omega^\frac{\alpha d}{d-2}\, |u|^\frac{2d}{d-2} \,dx \right)^\frac{d-2}{d}
	\leq S^{-1} \int_\omega d_\Omega^\alpha \, |\nabla u|^2\,dx \,.
	$$
	Combining these two bounds gives the claimed inequality for $d\geq 3$.
	
	If $d=2$, using again H\"older's inequality,  we bound for some arbitrary $q\in(2,\infty)$
	$$
	\int_\omega d_\Omega^\beta\,  |u|^2\,dx \leq \left( \int_\omega d_\Omega^{-\alpha+\beta}\,dx \right)^{1-\frac 2q} \left( \int_\omega d_\Omega^{\beta+\alpha(\frac q2-1)} |u|^q \,dx \right)^\frac 2q.
	$$
	Bounding the second term on the right side using Proposition \ref{sobintdegdom}, we arrive at the claimed inequality for $d=2$.
\end{proof}

For the proof of Propositions \ref{sobintdegdom} and \ref{sobdegdom}, we need the following weighted Sobolev inequalities from \cite[Theorem 21.3]{OK}. For $1\leq p<d$ and $\gamma\in(-\infty,p-1)$ we let $W^{1,p,(\gamma)}_0(\Omega)$ denote the completion of $C^1_c(\Omega)$ with respect to the norm $(\int_\Omega d_\Omega^\gamma\,  |\nabla u|^p\,dx)^{1/p}$.

\begin{theorem}\label{opickufner}
	Let $1\leq p<d$ and let $\gamma\in(-\infty,p-1)$. Let $\Omega\subset\R^d$ be a bounded open set with $C^{0,1}$-boundary. Then there is a constant $S>0$ such that, for all $u\in W^{1,p,(\gamma)}_0(\Omega)$,
	$$
	\int_\Omega d_\Omega^\gamma\, |\nabla u|^p\,dx \geq S \left( \int_\Omega d_\Omega^{\frac{d\gamma}{d-p}} |u|^\frac{dp}{d-p}\,dx \right)^\frac{d-p}{d}.
	$$ 
\end{theorem}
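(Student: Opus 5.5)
The plan is to reduce the weighted inequality to two standard ingredients: the unweighted Sobolev inequality on cubes, and a weighted Hardy inequality
\begin{equation*}
\int_\Omega d_\Omega^{\gamma-p}\,|u|^p\,dx \leq C_H \int_\Omega d_\Omega^\gamma\,|\nabla u|^p\,dx \,,\qquad u\in C^1_c(\Omega)\,,
\end{equation*}
valid precisely because $\gamma<p-1$. These are glued together by a Whitney decomposition of $\Omega$. By density it suffices to treat $u\in C^1_c(\Omega)$. Throughout, $p^*:=dp/(d-p)$, and we note that $\frac{d\gamma}{d-p}=\gamma\,\frac{p^*}{p}$.

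\emph{Step 1 (local Sobolev on Whitney cubes).} Take a Whitney decomposition $\Omega=\bigcup_Q Q$ into closed dyadic cubes with $\ell(Q)\sim \dist(Q,\Omega^c)$. Let $Q^*$ denote a slightly enlarged cube, still in $\Omega$, with $d_\Omega\sim\ell(Q)$ on $Q^*$; the family $\{Q^*\}$ has bounded overlap. The scaled Sobolev inequality on a cube (Sobolev--Poincaré plus control of the mean) gives
$$
\|u\|_{L^{p^*}(Q)} \leq C\left(\|\nabla u\|_{L^p(Q^*)} + \ell(Q)^{-1}\|u\|_{L^p(Q^*)}\right)
$$
with $C$ independent of $Q$. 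Since $d_\Omega\sim\ell(Q)$ on $Q^*$, multiplying by $\ell(Q)^{\gamma/p}$ yields
$$
\int_Q d_\Omega^{\gamma p^*/p}|u|^{p^*}dx \leq C\left(\int_{Q^*} \left(d_\Omega^\gamma|\nabla u|^p + d_\Omega^{\gamma-p}|u|^p\right)dx\right)^{p^*/p}.
$$

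\emph{Step 2 (summation).} Sum over $Q$. Since $p^*/p\geq 1$, we have $\sum_Q a_Q^{p^*/p}\leq(\sum_Q a_Q)^{p^*/p}$, and by bounded overlap
$$
\int_\Omega d_\Omega^{\frac{d\gamma}{d-p}}|u|^{p^*}dx \leq C\left(\int_\Omega d_\Omega^\gamma|\nabla u|^p\,dx+\int_\Omega d_\Omega^{\gamma-p}|u|^p\,dx\right)^{p^*/p}.
$$
Inserting the Hardy inequality and raising to the power $p/p^*=(d-p)/d$ gives the theorem.

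\emph{Step 3 (Hardy inequality; the main obstacle).} Since $\partial\Omega$ is Lipschitz and compact, it can be covered by finitely many charts in which $\Omega$ is the region above a Lipschitz graph. In each chart $d_\Omega$ is comparable to the vertical distance $t$ to the graph. Along each vertical segment of length $a$ starting on the boundary, $u$ vanishes at $t=0$, so the one-dimensional Hardy inequality
$$
\int_0^a t^{\gamma-p}|f|^p\,dt\leq C\int_0^a t^\gamma|f'|^p\,dt\,,\qquad f(0)=0\,,
$$
applies. This inequality holds exactly for $\gamma<p-1$, which is where the hypothesis enters. Integrating over the base variables then controls $\int d_\Omega^{\gamma-p}|u|^p$ over a boundary strip $\{d_\Omega<\kappa\}$.

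For the interior, take a cutoff $\phi$ with $\phi=1$ on $\{d_\Omega\geq\kappa\}$ and $\supp\phi\subset\{d_\Omega\geq\kappa/2\}$. Friedrichs' inequality gives
$$
\int_\Omega|\phi u|^p\,dx\leq C\int_\Omega|\nabla(\phi u)|^p\,dx\leq C\int_{\supp\phi}|\nabla u|^p\,dx+C\int_{\{d_\Omega<\kappa\}}|u|^p\,dx \,.
$$
On $\supp\phi$ the weight $d_\Omega$ is bounded above and below, so the first term is bounded by $C\int_\Omega d_\Omega^\gamma|\nabla u|^p$. The second term is already controlled by the strip estimate, since $d_\Omega^{\gamma-p}$ is bounded below on the strip.

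The delicate points are keeping the constants uniform across charts and checking that $d_\Omega\sim t$ holds with Lipschitz-controlled constants. I would handle both via the Lipschitz graph representation and the bounded number of charts.
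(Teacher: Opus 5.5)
Your proof is correct. It takes a genuinely different route from the paper, which gives no proof of this statement and simply imports it from Opic--Kufner [Theorem 21.3].

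Your argument has four ingredients:
\begin{enumerate}
\item a Whitney decomposition of $\Omega$;
\item the scale-invariant Sobolev inequality $\|u\|_{L^{p^*}(Q)}\le C(\|\nabla u\|_{L^p(Q)}+\ell(Q)^{-1}\|u\|_{L^p(Q)})$ on each cube, which for $p<d$ follows from Sobolev--Poincar\'e together with $|u_Q|\,|Q|^{1/p^*}\le \ell(Q)^{-1}\|u\|_{L^p(Q)}$;
\item superadditivity of $t\mapsto t^{p^*/p}$;
\item the weighted Hardy inequality $\int_\Omega d_\Omega^{\gamma-p}|u|^p\,dx\le C_H\int_\Omega d_\Omega^\gamma|\nabla u|^p\,dx$.
\end{enumerate}
Every step holds, including the endpoint $p=1$. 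There the one-dimensional Hardy inequality for $\gamma<0$ follows from $|f(t)|\le\int_0^t|f'|$ and Fubini.

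The citation is shorter. What your proof buys is transparency about where each hypothesis enters. The weight exponent $d\gamma/(d-p)=\gamma p^*/p$ is exactly what scale invariance on Whitney cubes forces. Both the restriction $\gamma<p-1$ and the Lipschitz regularity of $\partial\Omega$ are used only through the Hardy inequality. Hence your argument gives the weighted Sobolev inequality on any open set for which that Hardy inequality holds. This is the kind of flexibility one would want for the extensions to other singular sets sketched at the end of the paper.

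Two minor remarks:
\begin{itemize}
\item The enlarged cubes $Q^*$ are not needed, since the Sobolev inequality with the lower-order term already holds on $Q$ itself.
\item Passing from $C^1_c(\Omega)$ to the completion $W^{1,p,(\gamma)}_0(\Omega)$ is justified because the inequality makes Cauchy sequences in the gradient norm Cauchy in the weighted $L^{p^*}$ norm.
\end{itemize}
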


\begin{proof}[Proof of Proposition \ref{sobdegdom}]
	This follows immediately from Theorem \ref{opickufner} with $p=2$.
\end{proof}

\begin{proof}[Proof of Proposition \ref{sobintdegdom}]
	We first consider the case $q>2 (\beta-\alpha +2)/(1-\alpha)$ and $q\geq 4$. We apply Theorem \ref{opickufner} with
	$$
	d= 2 \,,
	\qquad
	p = \frac{2(q-2)}{q}
	\qquad\text{and}\qquad
	\gamma = \frac{2}{q} \left( \beta + \alpha\left(\frac q2 -1\right) \right).
	$$
	The assumption $1\leq p <2$ is satisfied in view of $q\geq 4$ and the assumption $\gamma<p-1$ is satisfied in view of $q>2 (\beta-\alpha +2)/(1-\alpha)$. Thus, we obtain
	$$
	\int_\Omega |\nabla v|^{2-\frac 4q} \, d_\Omega^{\frac{2}{q} \left( \beta + \alpha\left(\frac q2 -1\right) \right)}\,dx \geq S \left( \int_\Omega |v|^{q-2}\, d_\Omega^{\beta + \alpha(\frac q2-1)}\,dx \right)^\frac{2}{q}.
	$$
	We choose $v= |u|^{q/(q-2)}$ and obtain the existence of $\tilde S >0$ such that
	$$
	\int_\Omega |u|^{\frac 4q}\, |\nabla u|^{2-\frac 4q} \, d_\Omega^{\frac{2}{q} \left( \beta + \alpha\left(\frac q2 -1\right) \right)}\,dx \geq \tilde S  \left( \int_\Omega |u|^q \, d_\Omega^{\beta + \alpha(\frac q2-1)}\,dx \right)^\frac{2}{q}. 
	$$	
	By H\"older's inequality, writing
	$$
	|u|^{\frac 4q}\, |\nabla u|^{2-\frac 4q} \,d_\Omega^{\frac{2}{q} \left( \beta + \alpha\left(\frac q2 -1\right) \right)} = \left( |u|^2 d_\Omega^\beta \right)^{\frac{2}q} \left( |\nabla u|^2 d_\Omega^\alpha \right)^\frac{q-2}{q},
	$$
	we find
	$$
		\int_\Omega |u|^{\frac 4q}\, |\nabla u|^{2-\frac 4q}\, d_\Omega^{\frac{2}{q} \left( \beta + \alpha\left(\frac q2 -1\right) \right)}\,dx 
		\leq \left( \int_\Omega |u|^2 d_\Omega^\beta \,dx \right)^{\frac{2}q} \left( \int_\Omega |\nabla u|^2 d_\Omega^\alpha \,dx \right)^\frac{q-2}{q}.
	$$
	To summarize, we have shown the existence of $\check S >0$ such that
	$$
	\left( \int_\Omega |u|^2 d_\Omega^\beta \,dx \right)^{\frac{2}q} \left( \int_\Omega |\nabla u|^2 d_\Omega^\alpha \,dx \right)^\frac{q-2}{q}
	\geq \check S \left( \int_\Omega |u|^q \, d_\Omega^{\beta + \alpha(\frac q2-1)}\,dx \right)^{ \frac{2}{q}}. 
	$$
	This is the claimed inequality under the assumptions $q>2(\beta-\alpha+2)/(1-\alpha)$ and $q\geq 4$.
	
	For general $q\in[2,\infty)$, we fix a $q_0>q$ such that $q_0>2(\beta-\alpha+2)/(1-\alpha)$ and $q_0\geq 4$ and we bound
	$$
	\int_\Omega |u|^q d_\Omega^{\beta + \alpha(\frac q2-1)}\,dx
	\leq \left( \int_\Omega |u|^{q_0} \, d_\Omega^{\beta + \alpha(\frac {q_0}2-1)}\,dx \right)^{\frac{q-2}{q_0-2}} \left( \int_\Omega |u|^2 d_\Omega^{\beta}\,dx \right)^{\frac{q_0-q}{q_0-2}}.
	$$
	Applying the bound for $q_0$ that we just proved, we obtain the claimed inequality in the general case. This completes the proof of Proposition \ref{sobintdegdom}.
\end{proof}


\section{Extensions and additional remarks}\label{sec:final}

So far, in this paper we have shown a Pleijel bound for two different classes of degenerate elliptic operators, namely the Baouendi--Grushin operators and operators with ellipticity degenerating on the boundary. We believe that the techniques that we have introduced could be applied to treat several other interesting cases of degenerate elliptic operators, and in the first part of this section we mention some possible extensions. In the second part we review the Baouendi--Grushin case in light of these extensions and we briefly discuss a familly of operators from \cite{CDHT}.


\subsubsection*{Extension 1}
Let $a$ and $b$ be as in Subsection \ref{s4} and assume that $\beta >-1$ and $\alpha\geq 0$. The quadratic form \eqref{eq:quadform} considered on $C^1(\overline\Omega)$ (rather than on $C^1_c(\Omega)$) generates an operator $H$ in the Hilbert space with norm \eqref{eq:norm}, which corresponds to natural (Neumann--type) boundary conditions. (As an aside we mention the fact \cite{Vi} that for $\alpha\geq 1$ the operators defined via the quadratic form on $C^1(\overline\Omega)$ and on $C^1_c(\Omega)$ coincide.)

We believe that, when $\alpha-\beta<2/d$, the analogue of Theorem \ref{maindeg} remains valid. 

The Weyl asymptotics in this case are known \cite{BiSo0,BiSo,Ta}, so what remains to be done is to prove the Faber--Krahn-type inequalities. We expect the Sobolev inequalities in \cite[Subsection 2.1.7]{Ma} to be useful for this task.

\subsubsection*{Extension 2}
One could consider a second order operator with ellipticity degenerating on a closed $C^1$-submanifold $\Sigma$ of $\overline\Omega$ of co-dimension 1 and the degeneration is comparable with $\dist(x,\Sigma)^\alpha$. Again, at the same time a degeneration of the inner product of the form $\dist(x,\Sigma)^\beta$ is allowed. Both the cases of Dirichlet and Neumann boundary conditions can be considered and in the latter case one probably wants to assume that $\Sigma$ is contained in $\Omega$.

We believe that when the parameters $\alpha$ and $\beta$ satisfy the assumptions of Extension 1, then Theorem \ref{maindeg} remains valid.

\subsubsection*{Extension 3}
We believe that similar results are valid when the closed submanifold $\Sigma$ of $\overline\Omega$ has arbitrary codimension. Now the condition $\alpha-\beta<2/d$ needs to be adjusted correspondingly.

\subsubsection*{Another look at the Baouendi-Grushin case}
We consider general $n,m\geq 1$, although later we will restrict ourselves to $n=1$. We consider an open set $\Omega\subset\R^{n+m}$ and recall that the quadratic form under consideration is
$$
\int_\Omega (|\nabla_1 u|^2 + |x_1|^{2a}|\nabla_2u|^2)\,dx
$$
with $0\leq a< n/m$. (We write $a$ in order to avoid confusion with the parameter $\alpha$ that appears later.) Assume $u$ and $v$ are related by
$$
u(x_1,x_2) = v( (a+1)^{-1} |x_1|^a x_1,x_2) \,.
$$
Then
\begin{align*}
	|\nabla_1 u(x_1,x_2)|^2 = |x_1|^{2a} & \left( (\partial_{s_1} v)( (a+1)^{-1} |x_1|^a x_1,x_2)^2 \right. \\
	& \quad \left. + \ \frac{(a+1)^{-2}}{|(a+1)^{-1}|x_1|^a x_1|^2} |(\nabla_{\theta_1}v)((a+1)^{-1} |x_1|^a x_1,x_2)|^2 \right)
\end{align*}
where we consider $v$ as a function of variables $(y_1,y_2)\in\R^{n+m}$ and write $|y_1|=s_1$ and $y_1/|y_1|=\theta_1$. Thus, by a change of variables,
\begin{align*}
	& \int_\Omega (|\nabla_1 u|^2 + |x_1|^{2a}|\nabla_2u|^2)\,dx \\
	& = (a+1)^{\frac{n-1+a}{a+1}} \int_{\tilde\Omega} \left( |\partial_{s_1} v|^2 + \frac{(a+1)^{-2}}{|y_1|^2}|\nabla_{\theta_1} v|^2 + |\nabla_2 v|^2 \right) |y_1|^{\frac{a(2-n)}{a+1}}\,dy \,,
\end{align*}
where
$$
\tilde\Omega := \{ ((a+1)^{-1}|x_1|^a x_1,x_2) \in\R^{n+m}:\ (x_1,x_2)\in\Omega \} \,.
$$
Similarly,
$$
\int_\Omega |u|^2\,dx = (a+1)^{\frac{n-1-a}{a+1}} \int_{\tilde\Omega} |v|^2 |y_1|^{-\frac{an}{a+1}}\,dy \,.
$$

In particular, when $n=1$, this is an operator essentially of the same form as in Theorem \ref{maindeg}, except that the distance $d_\Omega$ is replaced by the distance $\dist(x,\Sigma)$ to the hypersurface
$$
\Sigma = \{ x\in\Omega :\ x_1 = 0 \} \,.
$$
Moreover, we have
$$
\alpha = \frac{a}{a+1}
\qquad\text{and}\qquad
\beta = - \frac{a}{a+1}
$$
and, recalling that $0\leq a<1/m$, we see that the assumptions $0\leq\alpha<1$, $\beta >-2$ and $\alpha-\beta<2/d$ (with $d:=1+m$) in Theorem \ref{maindeg} are satisfied. Therefore, if Theorem \ref{maindeg} remains valid in the case where the degeneration occurs on any hypersurface (Extension 2), then Theorem \ref{main} is a consequence of this extension of Theorem \ref{maindeg} (except, possibly, for the assumptions on $\Omega$).

If Theorem \ref{maindeg} remained valid for degenerations along submanifolds of arbitrary co-dimension (Extension 3) and with \emph{negative} degeneration exponents $\alpha$, then Theorem \ref{main} would be a special case also with $n\geq 2$.


\subsubsection*{The operator from \cite{CDHT}}

The setting is a smooth compact manifold $X$ of dimension $d\geq 2$ with boundary. We assume that near the boundary, $X$ is diffeomorphic to $[0,1)\times M$, where $M$ is a smooth compact manifold of dimension $m:=d-1$. We assume that there is a transverse coordinate $\rho$, ranging over $[0,1)$, such that $\partial X = \{ \rho =0 \}$. We assume that there is a Riemannian metric on the interior of $X$ such that on $[0,1)\times M$ we have
$$
g = \rho^{-a} (d\rho^2 + g_0(\rho))
$$
with an exponent $0<a<2/d$ and with a family of Riemannian metrics $g_0(\rho)$ on $M$ that depends smoothly on $\rho\in[0,1)$. Consequently, the Riemannian volume form on $[0,1)\times M$ can be written as $dv_g = \rho^{-ad/2} d\rho\,dv_{g_0(\rho)}$, where for each $\rho\in[0,1)$, $dv_{g_0(\rho)}$ is a Riemannian volume form on $M$. For functions $u$ supported on $[0,1)\times M$ the square of the Hilbert space norm is
$$
\int_0^1 \int_M |u|^2 \rho^{-ad/2}\,dv_{g_0(\rho)} \,d\rho
$$
and the quadratic form of the Laplace--Beltrami operator is
$$
\int_0^1 \int_M \left( |\partial_\rho u|^2 + |\overline d u|_{g_0(\rho)}^2 \right) \rho^{a(2-d)/2}\,dv_{g_0(\rho)} \,d\rho
$$
where $\overline d$ is the differential on $M$. The quadratic form is defined for functions $u\in C^1_c(X\setminus\partial X)$.

Thus, we have essentially a problem as in Theorem \ref{maindeg} with
$$
\alpha = \frac{a(2-d)}{2} 
\qquad
\beta = -\frac{ad}{2} \,.
$$
The assumption $\alpha - \beta <2/d$ is satisfied in view of $a<2/d$, and this also implies $\beta>-1>-2$. However, just as in the Baouendi--Grushin case, when $d\geq 3$, the assumption $\alpha\geq 0$ is \emph{not} satisfied.

Let us argue that the proof of Theorem \ref{maindeg} goes through in the present setting and gives a Pleijel bound with constant $\gamma(\R^d)$. The assumption $\alpha\geq 0$ comes mainly from the Weyl asymptotics in Lemma \ref{weyldeg}, but the use of this proposition can be replaced by the main result in \cite{CDHT}. We believe that the assumption $\alpha\geq 0$ is not needed in the proofs of Propositions \ref{almostfkdeg} and \ref{fkunifdeg}. In the proof of Proposition \ref{almostfkdeg}, only a minor change arises at the end of the proof when $(d_\Omega(z)-r)^\alpha$ is bounded from below. (The lower bound $(1-2r\delta)^\alpha$ needs to be replaced by $(1-2(\sgn\alpha) r\delta)^{\alpha}$.) In the proof of Proposition \ref{fkunifdeg} the main ingredient is Theorem \ref{opickufner}, and this result is valid for weights $A\in(-\infty,p-1)$. Because of this, Proposition \ref{sobdegdom} extends immediately to $\alpha\in(-\infty,0)$.


\section{On Courant's nodal theorem for degenerate elliptic operators}\label{sec:courant}

\subsection{The case of the Baouendi--Grushin operator}~\\
Our goal in this subsection is to prove the Courant theorem for the Baouendi--Grushin operator, as stated in Theorem \ref{maincourant}. We recall that $\Omega$ is an open, connected subset of $\R^{n+m}$ and that we are assuming $n>1$. A partial result for $n=1$ is obtained in the next subsection. The difference between $n=1$ and $n>1$  comes from the fact that $\Omega\setminus\{x_1=0\}$ is connected in the latter case, but not in the first case.

\begin{proof}[Proof of Theorem \ref{maincourant}] 
	We shall write $\Sigma :=\{0\}\times\R^m = \{x_1=0\}$ for the singular set. Let $\psi$ be an eigenfunction corresponding to the eigenvalue $\lambda_k$ for some $k\geq 1$. We need to show that $\psi$ has at most $k$ nodal domains.
	
	We can use the usual contradiction argument assuming $\psi$ has more than $k$ nodal domains. We fix a nodal domain $\omega$ and construct a function $\widetilde\psi = \sum_j c_j \1_{\omega_j} \psi$ where $\omega_j$ are the nodal domains of $\psi$ \emph{different from $\omega$} and where the $c_j$ are numbers such that $\|\widetilde \psi\|_{L^2}=1$ and such that $\widetilde\psi$ is orthogonal to $k-1$ linearly independent eigenfunction corresponding to eigenvalues that do not exceed $\lambda_k$. As discussed for instance in \cite{FH}, the function $\widetilde\psi$ belongs to the form domain of the operator and satisfies the eigenvalue equation in the weak sense. It follows that $\widetilde\psi$ belongs to the operator domain and is an eigenfunction. By construction, $\widetilde\psi$ vanishes in $\omega$. Since $\Omega\setminus\Sigma$ is connected (because $n>1$), we can apply the unique continuation principle in the set $\Omega\setminus\Sigma$ and deduce that $\widetilde\psi$ vanishes in $\Omega\setminus\Sigma$, contradicting the assumption $\|\widetilde \psi\|_{L^2}=1$.
\end{proof}


\medskip

\subsection{Additional remarks}~\\
We end this section with further results on Courant nodal theorems for degenerate second-order elliptic operator in a connected open set $\Omega\subset\R^d$. We begin by noting that the standard proof (as presented for instance in the proof of Theorem \ref{maincourant}) works if one of the following assumptions holds:
\begin{enumerate}
\item[(a)] The coefficients are analytic in $\Omega$
\item[(b)] $\Omega \subset \mathbb R^2$
\item[(c)] The degeneracy is at the boundary (see the model considered in  \eqref{hypdeg})
\item[(d)] The degeneracy occurs on a set $\Sigma$ of measure $0$ and $\Omega\setminus \Sigma$ is connected (see the model considered in Section \ref{s1} when $n>1$ or the discussion in \cite{A}).
\end{enumerate}
Indeed, in case (a) one uses the unique continuation theorem by J.~M.~Bony \cite{Bo}, while in case (b) one uses those of Watanabe \cite{Wa}; see \cite{EL}. In cases (c) and (d) one needs to assume that the highest order coefficients are locally Lipschitz continuous in $\Omega$ and in $\Omega\setminus\Sigma$, respectively, so that one can use the unique continuation theorem of Aronszajn, Krzywicki and Szarski \cite{AKS}. Note that case (c) includes the case discussed in Subsection \ref{s4} and case (d) includes the case $n>1$ of the Baouendi--Grushin operator discussed in Subsection \ref{s1a}.

A case that is not covered by these results is where $\Sigma$ is a regular hypersurface in $\Omega$. In this case one might be tempted to apply a statement given by Bahouri \cite{Ba} as a side-remark, but no proof\footnote{This was confirmed by the author.} of this statement is available. In what follows we show that in this case it is possible to get a Courant nodal theorem in a weaker form, in the spirit of earlier work by Alessandrini \cite{Al}; see also \cite{SR}. We note this result is applicable in the case $n=1$ and $\alpha\in(0,\infty)\setminus\N$ of the Baouendi--Grushin operator, which is left open in Theorem \ref{maincourant}. We state the result for a general selfadjoint linear second order degenerate elliptic operator $L$ with $\Sigma$ denoting the set of degeneracy.

\begin{proposition}\label{weakcourant}
	Suppose that $\Omega$ is connected and that $\Sigma$ is a regular hypersurface such that $\Omega \setminus \Sigma= \Omega_1 \cup \Omega_2$ with $\Omega_j$ connected. If the operator $L$ satisfies the unique continuation property in $\Omega_1$ and $\Omega_2$, then any $k$-th eigenfunction ($k\geq 1$) has at most $(k+1)$ nodal domains.
\end{proposition}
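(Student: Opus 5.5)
We argue by contradiction, following the scheme of the proof of Theorem \ref{maincourant} but now keeping track of the two components $\Omega_1,\Omega_2$ of $\Omega\setminus\Sigma$. Let $\psi$ be an eigenfunction for $\lambda_k$ and suppose it has at least $k+2$ nodal domains. The plan is to build an eigenfunction that vanishes on a nonempty open subset of \emph{both} $\Omega_1$ and $\Omega_2$. Unique continuation in each $\Omega_j$ then forces it to vanish on $\Omega_1\cup\Omega_2=\Omega\setminus\Sigma$, hence a.e. in $\Omega$ since $\Sigma$ has measure zero. This contradicts the normalization.

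First, each nodal domain $\omega$ is open, nonempty and disjoint from neither $\Omega\setminus\Sigma$ nor of measure zero, so it meets $\Omega_1$ or $\Omega_2$ in a nonempty open set. We pick two distinct nodal domains $\omega,\omega'$ chosen so that $\omega\cap\Omega_1\neq\emptyset$ and $\omega'\cap\Omega_2\neq\emptyset$. This requires a short case distinction. If some nodal domain meets both $\Omega_1$ and $\Omega_2$, use it together with any other nodal domain (then we really need only one excluded domain, which is even better). Otherwise every nodal domain lies, up to $\Sigma$, in exactly one $\Omega_j$. Then either both $j$ occur, and we pick one of each. Or all nodal domains lie in a single $\Omega_j$, say $\Omega_1$, so that $\psi\equiv 0$ on $\Omega_2$; by unique continuation in $\Omega_2$ this is consistent, and we then need only excluded domains inside $\Omega_1$, since vanishing on $\Omega_2$ is automatic.

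Next, as in the proof of Theorem \ref{maincourant}, we form $\widetilde\psi=\sum_j c_j\1_{\omega_j}\psi$, where the sum runs over the at least $k$ nodal domains other than $\omega,\omega'$. The coefficients are chosen, by dimension counting ($k$ free parameters, $k-1$ linear constraints), so that $\|\widetilde\psi\|=1$ and $\widetilde\psi$ is orthogonal to the first $k-1$ eigenfunctions. By the argument cited from \cite{FH}, each $\1_{\omega_j}\psi$ lies in the form domain and satisfies
$$
q[\1_{\omega_j}\psi]=\lambda_k\|\1_{\omega_j}\psi\|^2 ,
$$
so $q[\widetilde\psi]\le\lambda_k\|\widetilde\psi\|^2$ while $\widetilde\psi\perp$ the first $k-1$ eigenfunctions. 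The variational principle then shows $\widetilde\psi$ is a minimizer, hence an eigenfunction for $\lambda_k$. By construction it vanishes on the open sets $\omega\cap\Omega_1$ and $\omega'\cap\Omega_2$, so unique continuation gives $\widetilde\psi=0$ on $\Omega_1\cup\Omega_2$, the desired contradiction. In the degenerate case where all domains lie in $\Omega_1$, one excluded domain suffices and we even get at most $k$ nodal domains.

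The main obstacle I expect is the justification that $\1_{\omega_j}\psi$ belongs to the form domain with the stated energy identity, near $\Sigma$ and $\partial\Omega$, for a general degenerate $L$. I would rely on the truncation argument of \cite{FH}, approximating by $(\psi-\epsilon)_+\1_{\omega_j}$. The only structural inputs this needs are that the form is local and Markovian and that $\psi$ is continuous.
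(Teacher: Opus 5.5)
Your proof is correct, but in the main case it takes a different route from the paper. Your Case A, a nodal domain meeting both $\Omega_1$ and $\Omega_2$, is essentially the paper's Step 1, which the paper phrases as $\omega\cap\Sigma\neq\emptyset$: exclude that one domain and apply unique continuation on each side, giving at most $k$ nodal domains.

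When every nodal domain lies in a single $\Omega_j$, the paper follows Alessandrini \cite{Al}. It regards $\psi|_{\Omega_j}$ as an eigenfunction of the Dirichlet realization of $L$ in $\Omega_j$. A minimax argument then shows that the minimal labels $K_j$ of $\lambda_k$ in the Dirichlet spectra of $\Omega_j$ satisfy $K_1+K_2\le k+1$. Courant's theorem in each $\Omega_j$ gives $r_j\le K_j$.

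You instead rerun the usual Courant construction, excluding one nodal domain on each side. This costs exactly one extra domain and yields the bound $k+1$ directly.

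What your route buys:
\begin{itemize}
\item It is shorter and uses only the ingredients already needed in Step 1: the form-domain fact from \cite{FH} (with locality killing the mixed terms), the variational characterization of $\lambda_k$, unique continuation in each $\Omega_j$, and $|\Sigma|=0$.
\item You never have to treat $\psi|_{\Omega_j}$ as a Dirichlet eigenfunction on $\Omega_j$, nor invoke Courant's theorem on the pieces.
\item It gives the paper's remark on $\ell$ components, at most $k+\ell-1$ nodal domains, with no change.
\end{itemize}

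What the paper's route buys is finer information: the numbers $r_1,r_2$ of nodal domains on each side are controlled separately by where $\lambda_k$ sits in the Dirichlet spectra of $\Omega_1$ and $\Omega_2$.

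Two cosmetic points. The sentence justifying that every nodal domain meets $\Omega_1\cup\Omega_2$ is garbled; the reason is simply that $\Sigma$ has empty interior. And in the subcase $\psi\equiv 0$ on $\Omega_2$, no unique continuation in $\Omega_2$ is needed at all.
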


We emphasize that the main thrust of this theorem is that we do \emph{not} assume the unique continuation property in $\Omega$, but only in $\Omega_1$ and $\Omega_2$. This weaker assumption leads to a slightly weaker bound on the number of nodal domains than in Courant's original theorem, namely $k+1$ rather than $k$. For the unique continuation property in $\Omega_j$ it suffices that the highest order coefficients are locally Lipschitz continuous in $\Omega_j$ \cite{AKS}.

 \begin{remark} 
 	The same proof shows that if $\Omega \setminus \Sigma=\bigcup_{j=1}^\ell   \Omega_j$ with $\Omega_j$ connected, then a $k$-th eigenfunction ($k\geq 1$)  has at most $(k+\ell -1)$ nodal domains.
\end{remark}

\begin{proof}
	Let $\psi$ be an eigenfunction corresponding to the eigenvalue $\lambda_k$, $k\geq 1$.
	
	\medskip
	
	\emph{Step 1.}	
	The easy case is when $\psi$ has a nodal domain $\omega$ that has a non-empty intersection with $\Sigma$. In this case we can show that $\psi$ has at most $k$ nodal domains. Indeed, in this case we can use the usual contradiction argument (see the proof of Theorem \ref{maincourant}) assuming $\psi$ has more than $k$ nodal domains and construct a normalized eigenfunction $\widetilde\psi$ that vanishes in $\omega$, and therefore in $\omega\cap\Omega_1$ and $\omega\cap\Omega_2$. The unique continuation theorem, applied separately in $\Omega_1$ and $\Omega_2$, implies that $\widetilde \psi$ vanishes in $\Omega_1$ and in $\Omega_2$, contradicting the assumption that $\widetilde\psi$ is normalized.
	
	\medskip
	
	\emph{Step 2.}
	It remains to deal with the case where every nodal domain of $\psi$ has empty intersection with $\Sigma$. In this case we follow the proof of \cite[Theorem 4.5]{Al}. We observe that $\psi|_{\Omega_j}$ is an eigenfunction (with eigenvalue $\lambda_k$) of the Dirichlet realization of $L$ in $\Omega_j$. If $K_j$ is the minimal labelling of $\lambda_k$ as an eigenvalue of this Dirichlet realization in $\Omega_j$, then by the minimax characterization of eigenvalues, we have
	\begin{equation}
		\label{eq:alessandrini}
			K_1 +  K_2 \leq k+1 \,.
	\end{equation}
	Indeed, let $\lambda_j^{(\Omega_j)}$, $j=1,\ldots, K_j-1$, be the eigenvalues below $\lambda_k$ of the Dirichlet realization in $\Omega_j$ and let $\phi_1,\ldots,\phi_{K_1-1}$ and $\chi_1,\ldots\chi_{K_2-1}$ be corresponding systems of orthonormal eigenfunctions in $\Omega_1$ and $\Omega_2$, respectively. Then for all real numbers $a_1,\ldots,a_{K_1-1}$ and $b_1,\ldots,b_{K_2-1}$, not all of them zero, the function
	$$
	u:= \sum_{j=1}^{K_1-1} a_j \phi_j + \sum_{j=1}^{K_2-1} b_j\chi_j
	$$
	(where the $\phi_j$ and $\chi_j$ are extended by zero to $\Omega$) belongs to the form domain of the operator $L$ in $\Omega$ and for the corresponding quadratic form, denoted by $h$, we have
	$$
	\frac{h[u]}{\| u \|^2} = \frac{\sum_{j=1} \lambda_j^{(\Omega_1)} a_j^2 \int_{\Omega_1} \phi_j^2\,dx + \sum_{j=1} \lambda_j^{(\Omega_2)} b_j^2 \int_{\Omega_2} \chi_j^2\,dx}{\sum_{j=1} a_j^2 \int_{\Omega_1} \phi_j^2\,dx + \sum_{j=1} b_j^2 \int_{\Omega_2} \chi_j^2\,dx} \leq \max\{ \lambda_{K_1-1}^{(\Omega_1)}, \lambda_{K_2-1}^{(\Omega_2)} \} < \lambda_k \,.
	$$
	Thus, for any choice of the $a_j$ and $b_j$, the function $u$ cannot be orthogonal to the space spanned by the eigenfunction $\psi_1,\ldots,\psi_{k'-1}$ where $k'$ is such that $\lambda_{k'-1}<\lambda_{k'} = \lambda_k$. It follows that
	$$
	(K_1 -1) + (K_2-1) < k' \,,
	$$
	which implies \eqref{eq:alessandrini} since $k'\leq k$.
	
	Since, by assumption, the unique continuation property holds in $\Omega_j$, Courant's theorem in $\Omega_j$ implies that the number $r_j$ of nodal domains of $\psi|_{\Omega_j}$ in $\Omega_j$ satisfies
	$$
	r_j \leq K_j \,.
	$$
	Since $r_1+r_2$ is the number of nodal domains of $\psi$ in $\Omega$, this inequality together with \eqref{eq:alessandrini} shows that $\psi$ has at most $k+1$ nodal domains.
\end{proof}

Under further assumptions on $L$, the bound from Proposition \ref{weakcourant} can be improved for small $k$.

\begin{proposition}
	In addition to the assumptions of Proposition \ref{weakcourant} assume that
	\begin{itemize}
		\item either $L = \sum_{j=1}^p X_j^* X_j$, where $X_j$ are smooth vector fields satisfying the H\"ormander condition,
		\item or $L$ is an operator of the form treated in \cite{FrLa2}.
	\end{itemize}
	Then any $k$-the eigenfunction with $k=1,2$ has exactly $k$ nodal domains.
\end{proposition}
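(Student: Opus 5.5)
The plan is to treat $k=1$ and $k=2$ separately. I will use only two consequences of the extra structure: a strong maximum principle (Bony's for sums of squares of H\"ormander vector fields) or a Harnack inequality (as in \cite{FrLa2}) for nonnegative weak solutions of $(L-\lambda+M)u\geq0$; and the local H\"older continuity of eigenfunctions, so that nodal domains are defined.

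\emph{Case $k=1$.} Let $\psi_1$ be an eigenfunction for $\lambda_1$. Since $h[|\psi_1|]\le h[\psi_1]$ (this should follow from the chain rule for the vector fields, as in \cite{FH}), $|\psi_1|$ also minimizes the Rayleigh quotient. Hence $|\psi_1|$ is a nonnegative eigenfunction. The strong maximum principle / Harnack inequality on the connected set $\Omega$ then gives $|\psi_1|>0$ everywhere, so $\psi_1$ has exactly one nodal domain. The same argument shows $\lambda_1$ is simple: two orthogonal positive eigenfunctions cannot exist.

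\emph{Case $k=2$.} Let $\psi$ be an eigenfunction for $\lambda_2$. If $\lambda_2=\lambda_1$, simplicity is contradicted, so $\lambda_2>\lambda_1$ and $\psi\perp\psi_1$. Because $\psi_1>0$, $\psi$ must change sign, and therefore has at least two nodal domains. For the upper bound, suppose $\psi$ has nodal domains $\omega_1,\omega_2,\omega_3,\dots$. Each $\1_{\omega_i}\psi$ lies in the form domain with Rayleigh quotient $\lambda_2$ (as in \cite{FH}). So every nonzero function in $V=\mathrm{span}\{\1_{\omega_i}\psi\}_{i\le3}$ has quotient $\lambda_2$, and every nonzero element of $V\cap\psi_1^\perp$ (at least two-dimensional) is an eigenfunction for $\lambda_2$. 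Taking suitable combinations, I obtain an eigenfunction $w$ with the following properties:
\begin{itemize}
\item $w=\psi$ on $\omega_3$;
\item $w=0$ on $\omega_1$;
\item $w=c\psi$ on $\omega_2$ for some constant $c$.
\end{itemize}
Similarly, I obtain an eigenfunction vanishing identically on a whole nodal domain.

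The main obstacle is to derive a contradiction from an eigenfunction $w$ vanishing on the open set $\omega_1$ without global unique continuation, which is exactly what fails across $\Sigma$. My first attempt is to exploit signs. Among the three domains two carry the same sign, say $\psi>0$ on $\omega_1,\omega3$. Orthogonality to $\psi_1>0$ then forces the pattern of coefficients, and I choose $w\in V\cap\psi_1^\perp$ with $w\ge0$ on $\Omega\setminus\omega_2$ and $w=0$ on $\omega_1$. Near a boundary point of $\omega_1$ adjacent to $\omega_3$ (or to $\omega_2$ after relabelling), I then consider the function $\1_{\omega_1}\psi+t\,w$ for small $t>0$. It is a nonnegative weak supersolution of $(L-\lambda_2)u\ge0$ in a neighbourhood that straddles $\partial\omega_1$, and it vanishes on part of that neighbourhood. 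Harnack / Bony's strong maximum principle should then force it to vanish identically there, contradicting $\psi>0$ in $\omega_1$.

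If the supersolution property fails, the fallback is a Hopf-type argument on the common boundary of two adjacent nodal domains, which is the step I expect to require the specific structure of the two operator classes.
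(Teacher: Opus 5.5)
Your treatment of $k=1$ (via $|\psi_1|$ and the strong maximum principle) and of the lower bound for $k=2$ is sound; it even supplies the ``at least $k$'' half, which the paper's proof leaves implicit. For $k=2$ you use simplicity of $\lambda_1$ and positivity of $\psi_1$, so $\psi\perp\psi_1$ must change sign.

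The gap is the upper bound for $k=2$, precisely at the step you call the main obstacle. The function $\1_{\omega_1}\psi+tw$ is not a supersolution. Since $\psi>0$ in $\omega_1$ and $\psi=0$ on $\partial\omega_1$, extending by zero creates a nonpositive boundary flux term, i.e.\ $(L-\lambda_2)(\1_{\omega_1}\psi)\le 0$ (a Kato-type inequality). Adding the local solution $tw$ still gives a \emph{sub}solution. Nonnegative subsolutions vanishing at interior points are perfectly possible (think of $\psi^+$), so neither Bony's principle nor a Harnack inequality yields a contradiction.

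Using $w$ alone would work only if $\omega_1$ and $\omega_3$ lay in the same connected component of $\Omega\setminus\overline{\omega_2}$, where $w\geq 0$ and $w$ vanishes on $\omega_1$. Nothing in your argument provides this adjacency, and the Hopf-type fallback is left unspecified. Notice also that your scheme never uses the hypotheses specific to this proposition: unique continuation in $\Omega_1$ and $\Omega_2$, and the fact that $\Sigma$ splits $\Omega$ into exactly these two pieces. That is where the leverage lies.

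The missing idea is as follows. By Step 1 of the proof of Proposition \ref{weakcourant}, a $(k+1)$-st nodal domain can only exist if no nodal domain meets $\Sigma$, so each $\omega_i$ lies in $\Omega_1$ or in $\Omega_2$. If all of them lie in one $\Omega_\ell$, then $\psi\equiv 0$ on the other piece, and the standard Courant argument (with unique continuation in $\Omega_\ell$) gives a contradiction. Otherwise, for $k=2$, relabel so that $\omega_1\subset\Omega_1$ and $\omega_2,\omega_3\subset\Omega_2$. Take the normalized eigenfunction $\tilde\psi=c_1\1_{\omega_1}\psi+c_3\1_{\omega_3}\psi$ with $\tilde\psi\perp\psi_1$. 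It vanishes on $\omega_2$, hence on all of $\Omega_2$ by unique continuation in $\Omega_2$. So $c_3=0$, and $\tilde\psi=c_1\1_{\omega_1}\psi$ is one-signed and vanishes on the open set $\Omega_2$.

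This contradicts the strong maximum principle or Harnack inequality. The paper applies these to $u=e^{\sqrt{\lambda_2}\,t}\tilde\psi$, a one-signed solution of $(L-\partial_t^2)u=0$ in $\Omega\times\R$. This device also spares you from needing a Harnack inequality for supersolutions of $L-\lambda+M$ in the Franchi--Lanconelli class, which you assume without checking. With your positivity of $\psi_1$ you could even conclude directly from $\tilde\psi\perp\psi_1$, which forces $c_1=0$. For $k=1$ the paper runs the same scheme with $\tilde\psi=\1_{\omega_1}\psi$; your $|\psi_1|$ argument is an equally valid alternative there.
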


We do not reproduce the precise assumptions of the operators treated in \cite{FrLa2}, but we emphasize that the smoothness of the vector fields there is relaxed and that this class includes the Baouendi--Grushin operators from Subsection \ref{s1a}.

\begin{proof}
	Let $\psi$ be an eigenfunction corresponding to the eigenvalue $\lambda_k$, $k =1,2$.
	
	According to Proposition \ref{weakcourant} and its proof, we know that $\psi$ has at most $k+1$ nodal domains and that the case of $k+1$ nodal domains can only occur if every nodal domain has empty intersection with $\Sigma$.
	
	If there was an $\ell\in\{1,2\}$ such that all nodal domains of $\psi$ are in $\Omega_\ell$, then we could follow the standard proof in $\Omega_\ell$ and would arrive at a contradiction.
	
	Thus, up to relabelling the $\omega_i$ and $\Omega_j$, we may assume that $\omega_1\subset\Omega_1$ and $\omega_2,\omega_{k+1}\subset\Omega_2$. Following the usual proof, we construct a normalized eigenfunction $\widetilde\psi$ for $\lambda_k$ that vanishes in $\omega_2$. Thus, for $k=1$ we see that $\widetilde\psi$ vanishes in $\Omega_2$, and we arrive at the same conclusion also for $k=2$ by using the unique continuation property in $\Omega_2$. Thus, $\widetilde\psi$ does not change sign in $\Omega$. 
	
	Let us show that this contradicts the strong maximum principle (or Harnack's inequality) for $L$. We consider the function $u$ on $\Omega\times\R$ defined by $u(x,t):=
	\widetilde\psi (x) e^{\sqrt{\lambda} t} $, which does not change sign and satisfies
	$$
	(L - \partial_t^2) u = 0
	\qquad\text{in}\ \Omega\times\R \,.
	$$
	Since the operator $L-\partial_t^2$ again satisfies the assumptions of the proposition, we can apply the strong maximum principle \cite[Corollaire 3.1]{Bo} (in the first case) or Harnack's inequality \cite[Theorem 5.3]{FrLa2} to deduce that $u\equiv 0$ in $\Omega\times\R$. Thus, $\widetilde\psi\equiv 0$, a contradiction.
\end{proof}

 
 \bibliographystyle{amsalpha}

\begin{thebibliography}{99}

\bibitem{A} L. Abatangelo, A. Ferrero and P. Luzzini.
\newblock On solutions to a class of degenerate equations with the Grushin operator.
\newblock J. Diff. Eq. 445 (2025), 113666.

\bibitem{Al} G. Alessandrini.
\newblock On Courant's nodal domain theorem.
\newblock Forum Math. 10 (1998), 521--532.

\bibitem{AKS}
 N. Aronszajn, A. Krzywicki and J. Szarski,
 \newblock  A unique continuation theorem for exterior differential forms on
Riemannian manifolds.
 \newblock Ark. Mat. 4 (1962), 417--453.

\bibitem{Ba} H. Bahouri.
\newblock Non prolongement unique des solutions d'op\'erateurs "somme de carr\'es".
\newblock Ann. Inst. Fourier 36 (1986), no. 4, 137--155.


\bibitem{BGM} A. Banerjee, N. Garofalo and R. Manna.
\newblock Carleman estimates for Baouendi-Grushin operators with applications to quantitative uniqueness and strong unique continuation.
\newblock Appl. Anal. 101 (2022), no. 10, 3667--3688.

\bibitem{Bath} M.S. Baouendi.
\newblock Sur une classe d'op\'erateurs elliptiques d\'eg\'en\'er\'es.
\newblock  Th\`ese d'\'etat. Universit\'e de Paris (Facult\'e des sciencs d'Orsay)  (1967).


\bibitem{Bao} M.S. Baouendi.
\newblock Sur une classe d'op\'erateurs elliptiques d\'eg\'en\'er\'es.
\newblock Bull. Soc. Math. France 95 (1967), 45--87.

\bibitem{BaGo} M.S. Baouendi and C. Goulaouic.
\newblock  R\'egularit\'e et th\'eorie spectrale pour une
classe d'op\'erateurs elliptiques d\'eg\'en\'er\'es.
\newblock Arch. Rat. Mec. Anal. 34 (1969), 361--379.

\bibitem{BerMe} P. B\'erard and D. Meyer.
\newblock In\'egalit\'es isop\'erim\'etriques et applications. 
\newblock Annales scientifiques de l'\'Ecole Normale Sup\'erieure, S\'erie 4,  Tome 15 (1982), no. 3,  513--541.

\bibitem{BiSo0} M. Sh. Birman and M. Z. Solomyak.
\newblock Quantitative Analysis in Sobolev Imbedding Theorems and Applications to Spectral Theory.
\newblock Amer. Math. Soc. Transl. Ser. 2, 114, American Mathematical Society, Providence, RI, 1980.

\bibitem{BiSo} M. Sh. Birman and M. Z. Solomyak.
\newblock Spectral asymptotics of nonsmooth elliptic operators. I.
\newblock Trans. Moscow Math. Soc. 27 (1972), 1--52.

\bibitem{BCP} P. Bolley, J. Camus, and Pham The Lai.
\newblock Noyau, r\'esolvante, et valeurs propres d'une classe d'op\'erateurs elliptiques et d\'eg\'en\'er\'es.
\newblock Lecture Notes in Mathematics  660 (1977), 33--46.

\bibitem{Bo} J.M. Bony.
\newblock Principe du maximum, in\'egalit\'e de Harnack et unicit\'e du probl\`eme de Cauchy pour des op\'erateurs elliptiques d\'eg\'en\'er\'es.
\newblock Annales de l'institut Fourier 19 (1969), no. 1, 277--304.

\bibitem{Bou} J. Bourgain.
\newblock On Pleijel's nodal domain theorem.
\newblock Int. Math. Res. Not. 2015, no. 6, 1601--1612.

\bibitem{CaFrLa} E. A. Carlen, R. L. Frank and S. Larson.
\newblock A Jensen inequality for partial traces and applications to partially semiclassical limits.
\newblock Lett. Math. Phys. 115 (2025), no. 3, Paper No. 52, 15 pp.

\bibitem{CX} J-Y. Chemin and Chao-Jian Xu.
\newblock Inclusions de Sobolev en calcul de Weyl-H\"ormander et
champs de vecteurs sous-elliptiques
\newblock Annales scientifiques de l'E.N.S. 4e s\'erie, 30 (1997), no. 6, 719--751.


\bibitem{CHT0} Y. Colin de Verdi\`ere, L. Hillairet and E. Tr\'elat.
\newblock Spectral asymptotics for sub-Riemannian  Laplacians, I: Quantum ergodicity and quantum limits in the 3-dimensional contact case.
\newblock Duke Math. Journal 167 (2018), no. 1, 109--174.

\bibitem{CHT} Y. Colin de Verdi\`ere, L.  Hillairet and E. Tr\'elat.
 \newblock Small-time asymptotics of hypoelliptic heat kernels near the diagonal, nilpotentization and related results.
 \newblock  Annales Henri Lebesgue 4 (2021), 897--971.

\bibitem{CHT1} Y.  Colin de Verdi\`ere, L. Hillairet  and E.  Tr\'elat.
 \newblock Spectral asymptotics for sub-Riemannian  Laplacians.
 \newblock Preprint (2022), arXiv:2212.02920.
 
 \bibitem{CDHT}  Y.  Colin de Verdi\`ere,  C. Dietze, M. V. de Hoop and 
E. Tr\'elat.
\newblock  Weyl formulae for some singular metrics with application
to acoustic modes in gas giants.
\newblock Preprint (2024), arXiv:2406.19734.

\bibitem{C} R. Courant.
\newblock Ein allgemeiner Satz zur Theorie der Eigenfunktionen selbstadjungierter Differentialausdr\"ucke.
\newblock G\"ott. Nachr. (1923), 81--84.

\bibitem{DPFaVi} N. De Ponti,  S. Farinelli and I. Y. Violo.
\newblock Pleijel nodal domain theorem in non-smooth setting.
\newblock Trans. Amer. Math. Soc. Ser. B 11 (2024), 1138--1182.

\bibitem{EL} S. Eswarathasan and C. Letrouit.
\newblock Nodal sets of Eigenfunctions of sub-Laplacians.
\newblock Internat. Math. Res. Notices (2023), no. 23, 20670--20700.

\bibitem{FrLa} B.~Franchi and E.~Lanconelli.
\newblock H\"older regularity theorem for a class of linear nonuniformly elliptic operators with measurable coefficients. 
\newblock Ann.~Scuola Norm.~Sup.~Pisa Cl.~Sci.~(4) \textbf{10} (1983), 523--541.

\bibitem{FrLa2} B.~Franchi  and  E.~Lanconelli.
\newblock An embedding theorem for Sobolev spaces related to non smooth vector fields and Harnack inequality. 
\newblock Comm. Partial Differential Equations \textbf{9} (1984), no. 13, 1237--1264.

 \bibitem{FH} R. Frank and B. Helffer.
 \newblock On Courant and Pleijel theorems for sub-Riemannian Laplacians.
 \newblock J. \'Ecole Polytechnique 12 (2025), 1083--1160. 
 
 \bibitem{Ga} N. Garofalo.
 \newblock Unique continuation for a class of elliptic operators which degenerate
 on a manifold of arbitrary codimension.
 \newblock J. Differential Equations 104 (1993), no. 1, 117--146.
 
\bibitem{GiMa} M.~Giaquinta and L.~Martinazzi.
\newblock An introduction to the regularity theory for elliptic systems, harmonic maps and minimal graphs.
\newblock Appunti. Sc. Norm. Super. Pisa (N. S.), 11. Edizioni della Normale, Pisa, 2012.
 
 \bibitem{Gr} V.V. Gru$\check{s}$in.
\newblock A certain class of hypoelliptic operators that are degenerate on a submanifold.
\newblock Mat. Sbornik 84 (126) (1971) 163-195.
 
 \bibitem{HS} A. Hassannezhad and D.  Sher. 
\newblock On Pleijel's nodal domain theorem for the Robin problem.
\newblock Bull. London Math. Soc. 56 (2024), no. 4, 1449--1467
 
\bibitem{HHOT} B. Helffer, T. Hoffmann-Ostenhof, S. Terracini.
\newblock Nodal domains and spectral minimal partitions.
\newblock Ann. Inst. H. Poincar\'e C Anal. Non Lin\'eaire 26 (2009), no. 1, 101--138.
 
\bibitem{HPS} B. Helffer and M. Persson Sundqvist.
\newblock On nodal domains in Euclidean balls.
\newblock Proc. Amer. Math. Soc. 144 (2017), no. 11, 4777--4791. 
 
\bibitem{LaWe} A. Laptev, T. Weidl,
\newblock Sharp Lieb--Thirring inequalities in high dimensions.
\newblock Acta Math. 184 (2000), no. 1, 87--111.
 
 \bibitem{Le} C. L\'ena.
\newblock Pleijel's nodal domain theorem for Neumann and Robin eigenfunctions. 
\newblock Annales de l'Institut Fourier 69 (2019), no. 1, 283--301. 

\bibitem{Lev} S. Levendorskii.
\newblock Degenerate elliptic equations.
\newblock Math. Appl., 258. Kluwer Academic Publishers Group, Dordrecht, 1993.

\bibitem{Ma} V. Maz'ya.
\newblock Sobolev spaces with applications to elliptic partial differential equations.
Second, revised and augmented edition.
\newblock Grundlehren Math. Wiss., 342, Springer, Heidelberg, 2011.

 \bibitem{Me} G. M\'etivier.
\newblock Fonction spectrale et valeurs propres d'une classe
d'op\'erateurs non elliptiques.
\newblock Comm. in PDE 1 (1976), 467--519.

\bibitem{Met2} G. M\'etivier.
\newblock Comportement asymptotique des valeurs propres d'op\'erateurs elliptiques d\'eg\'en\'er\'es.
\newblock Ast\'erisque 14-15 (1976)  215-249.

\bibitem{No} C. Nordin. 
\newblock  The asymptotic distribution of eigenvalues of a degenerate elliptic operator. 
\newblock Arkiv f\"or Mat. 10 (1972) 3--21. 
 
\bibitem{OK} P. Opic and A. Kufner.
\newblock Hardy--Type Inequalities.
\newblock Pitman Research Notes in Mathematics Series, vol. 219, Longman Scientific \& Technical, Harlow, 1990. 
 
 \bibitem{Pl} A.~Pleijel. 
\newblock Remarks on Courant's nodal theorem.
\newblock  Comm. Pure. Appl. Math. 9 (1956), 543--550. 

\bibitem{Po} I. Polterovich.
\newblock Pleijel's nodal domain theorem for free membranes.
\newblock Proc. Amer. Math. Soc. 137 (2009), no. 3, 1021--1024.

\bibitem{Qiu} Y. W. Qiu.
\newblock A note on the Pleijel theorem for H-type groups.
\newblock Preprint (2025), 	arXiv:2510.19381.

\bibitem{RoSi} D.~Robinson and A.~Sikora.
\newblock Analysis of degenerate elliptic operators of Gru\v{s}in type.
\newblock Math.~Z.~\textbf{260} (2008), no.~3, 475--508.

\bibitem{Ru} M. Rumin.
\newblock  Spectral density and Sobolev inequalities for pure and mixed states.
\newblock Geom. Funct. Anal. 20 (2010), no. 3, 817--844.

\bibitem{SR} I. Silvestre Rosello.
\newblock Weak Courant theorems for hypoelliptic operators.
\newblock In preparation.

\bibitem{Si} B. Simon.
\newblock Nonclassical eigenvalue asymptotics.
\newblock J. Funct. Anal. 53 (1983), no. 1, 84--98.

\bibitem{SiFI} B. Simon.
\newblock Functional Integration and Quantum Physics, 2nd edn. 
\newblock AMS Chelsea Publishing, Providence (2005)

\bibitem{Solo} I. A. Solomeshch.
\newblock Eigenvalues of some degenerate elliptic equations.
\newblock Mat. Sb. 54 (1961), no. 3, 295--310.

\bibitem{Stei} S. Steinerberger.
\newblock A geometric uncertainty principle with an application to Pleijel's estimate.
\newblock Ann. Henri Poincar\'e 15 (2014), no. 12, 2299--2319.

\bibitem{Ta} G. M. Tashchiyan.
\newblock Distribution of the eigenvalues of an elliptic Dirichlet problem. (Russian).
\newblock Proc. Sixth Drogobychsk. School-Sympos. Math. Programming and Related Questions, Sect. Functional Anal., 1974, 273--289.

\bibitem{Vi} M. I. Vishik.
\newblock Boundary-value problems for elliptic equations degenerating on the boundary of a region.
\newblock Mat. Sb. 35 (77) (1954), 513--568; English transl., Amer. Math. Soc. Transl. (2) 35 (1964), 15--78.

\bibitem{SV0}
  I. L. Vulis and M. Z. Solomjak.
  \newblock Spectral asymptotics of degenerate elliptic operators. 
  \newblock Soviet. Math. Dokl, vol 13, (1972), 1484-1488. 

\bibitem{SV} I. L. Vulis and M. Z. Solomyak.
\newblock Spectral asymptotics of second-order degenerate elliptic operators.
\newblock Mathematics of the USSR-Izvestiya 8 (1974), no. 6, 1343--1371.

\bibitem{Wa} K.  Watanabe.
\newblock  Sur l'unicit\'e du prolongement des solutions des \'equations elliptiques d\'eg\'en\'er\'ees.
\newblock Tohoku Math. Journ. 34(1982), 239--249.

\end{thebibliography}

\end{document}